\documentclass[11pt]{amsart}
\usepackage{lmodern}

\usepackage{amsmath,amsthm,amssymb,amsfonts}
\usepackage[utf8]{inputenc}
\DeclareMathAlphabet\mathbfcal{OMS}{cmsy}{b}{n}
\usepackage{xcolor}
\usepackage{mathtools}
\usepackage{tikz-cd,tikz}
\usepackage[
    colorlinks=true,
    linkcolor=black,
    urlcolor=cyan,
    citecolor=magenta
]{hyperref}

\hypersetup{pdftitle={On the Feyzbakhsh--Thomas programme for Fano 3-folds}}

\theoremstyle{definition}
\newtheorem{definition}{Definition}[section]

\newtheorem{theorem}[definition]{Theorem}

\newtheorem{proposition}[definition]{Proposition}

\newtheorem{corollary}[definition]{Corollary}

\newtheorem{lemma}[definition]{Lemma}

\newtheorem{remark}[definition]{Remark}

\newtheorem{thm}{Theorem}

\newcommand{\heart}{\ensuremath\heartsuit}

\DeclareMathOperator{\id}{id}

\title[On the Feyzbakhsh-Thomas programme for Fano $3$-folds]{On the Feyzbakhsh-Thomas programme for Fano $3$-folds}
\date{\today}

\author[I. Karpov]{Ivan Karpov (with an Appendix joint with Miguel Moreira)}

\begin{document}

\begin{abstract}
Let $X$ be a Fano $3$-fold with even canonical class which satisfies the generalized Bogomolov-Gieseker inequality, such as $\mathbb P^3$. We express Donaldson-Thomas invariants counting Gieseker semistable sheaves of rank $r$, where $r > 0$, on $X$ in terms of those counting sheaves of rank $0$ and pure dimension $2$. This implements an analogue of the programme initiated by S. Feyzbakhsh and R. Thomas in the case of Calabi-Yau varieties.

The methods include $K$-theoretic Donaldson-Thomas theory and, quite unexpectedly, the use of certain combinatorial properties of vertex algebras.
\end{abstract}

\maketitle

\tableofcontents

\section{Introduction}\label{intro}
\subsection{} The enumerative geometry of Calabi-Yau threefolds constitutes a well-established area. These invariants are \textit{numbers} associated to each moduli space of sheaves with a given $K$-theory class (or given Chern character) on a  Calabi-Yau 3-fold: the numbers are, in a suitable sense, Euler characteristics of these moduli spaces.

One of the recent advances in the area is the work of Feyzbakhsh and Thomas which formally reduces  calculations of invariants for rank $r$-sheaves to those of rank $0$ (\cite{FT}; and to $\operatorname{rank} = 1$: \cite{FT1}) solving a long-standing question. 

The goal of the present work is the development of an analogous programme for \textit{Fano} $3$-folds following the idea of Dominic Joyce, see \cite{JoycePres}. This task has a nature different from the situation of Calabi-Yau $3$-folds: the enumerative invariants now depend on some additional data and reflect a finer \textit{intersection-theoretic} structure of the moduli spaces.

\subsection{} Let us give some sense of the problem under consideration. Let $X$ be a smooth projective Fano $3$-fold. Fix a $K$-theory class $\alpha$ of dimension $>0$, and consider the moduli space $\mathcal M(\alpha)$ of Gieseker semistable sheaves in class $\alpha$.

Suppose for now, for the sake of demonstration, that all semistable sheaves in $\mathcal M(\alpha)$ are stable, and that there exists a universal sheaf $\mathcal U$ over $\mathcal M(\alpha) \times X.$ \footnote{In general, the first assumption will be  removed by replacing $[\mathcal M]^{vir}$ with the so-called Joyce's class, and the second one -- by working with the so-called $\operatorname{weight} 0$-descendants. We discuss this below.} 

Since $X$ is Fano, $\mathcal M(\alpha)$ has a perfect obstruction theory, and, since semistable objects are stable, there exists a \textit{Behrend-Fantechi virtual fundamental class} $[\mathcal M(\alpha)]^{vir}$ in $H_{2d}(\mathcal M(\alpha))$ for some $d \geq 0$ which is known as the virtual dimension of $\mathcal M(\alpha)$, see \cite{BF}.

In the case of Calabi-Yau varieties, the virtual dimension is always zero: the enumerative invariants are, as a consequence, numbers associated directly to the data of $\mathcal M(\alpha)$. For Fano varieties, however, to get a number from the virtual class, one has to  put additionally a cohomology class $\tau$ on $\mathcal M(\alpha)$, and to integrate it against $[\mathcal M(\alpha)]^{vir}$.

A natural choice is the classes of the form \begin{equation}\label{intd} \tau = \pi_{1*}(\operatorname{ch}_i(\mathcal U) \cup \pi_2^*(\kappa))\end{equation} known as descendant classes. Here $\kappa \in H^*(X)$, $i \in \mathbb Z$, and $\pi_j$ are the projections from $\mathcal M \times X$ to its two respective factors.

The enumerative invariants of $X$ are integrals of the form \begin{equation}\label{desc1}I(\tau) = \int_{[\mathcal M]^{vir}}\tau\end{equation} where $\tau$ is a descendant from Equation (\ref{intd}), or a product thereof.

In the present work, we prove that, for an arbitrary class $\alpha$ and arbitrary descendant $\tau$ with an \textit{algebraic insertion $\kappa$}, all integrals of the form (\ref{desc1}) are formally determined by those where $\psi$ is a descendant, and $\beta$ is a rank $0$, pure dimension $2$ class.

This constitutes \textit{a rank reduction algorithm}, the main technique of which will pass through \textit{K-theoretic Donaldson-Thomas theory}. The precise formulation will be given in Theorem \ref{mainthm}.

\subsection{}
We now give some more details. 

First of all, for any derived Artin stack $\mathfrak X,$ the notation $K(\mathfrak X)$ in this paper always stands for the direct sum of odd and even Blanc's topological $K$-theory with complex coefficients, see \cite{Blanc}, and \cite[Appendix A]{KM}. If $\mathfrak X$ is a scheme, it is known that $K(\mathfrak X)\simeq H^*(\mathfrak X, \mathbb C),$ and the isomorphism is established via the Chern character. For other generalities about this invariant, we refer to \cite[Appendix A]{KM}.

Let $X$ be a smooth, projective Fano $3$-fold over $\mathbb C$, which satisfies the generalized Bogomolov-Gieseker inequality of \cite{BMT}, and such that $K_X$ is divisible by $2$. We assume that $X$ is polarized by a divisor $H$ so that $K_X$ is an integer multiple of $H$. 

\begin{remark} We would like to assume also, for technical reasons, that $X$ is \textit{of class D}. Fortunately, however, it turns out that this condition \textit{always} holds for Fano varieties by a result of Voineagu, see the explanation in Section \ref{Kthsec}.
\end{remark}

For example, $X = \mathbb P^3$ satisfies all of our assumptions, by the results of~\cite{M}.

Suppose that $\alpha  \in K(X)$ is a $K$-theory class. Let $\underline{\mathcal M}(\alpha)$ be the moduli stack of  Gieseker semistable sheaves on $X$ of class $\alpha$; let $\mathcal M(\alpha):= \underline{\mathcal M}(\alpha)^{rig}$ be its rigidified version.

In his foundational work~\cite{Joy}, D. Joyce has defined the ``virtual fundamental class'' as an element $[\mathcal M]^{vir} \in H_*(\mathcal M(\alpha))$ which recovers the Behrend-Fantechi one when $\mathcal M(\alpha)$ is an algebraic space, but exists even in the presence of semistable objects. 

We recall also that over the product $\underline{\mathcal M}(\alpha) \times X$, there exists a universal sheaf $\mathcal U$. The integrals we are interested in, are of the form \begin{equation}\label{desccorr1} \int\limits_{[\mathcal M]^{vir}}P(\pi_{1*}  (\operatorname{ch}_{i_k}(\mathcal U) \cup \pi_2^*\gamma_k)) \in \mathbb C\end{equation} where $\gamma_k$ are cohomology classes on $X$, and $P$ is what is known as a \textit{weight 0}-polynomial (see~\cite[Section 2.4]{BLM}). The latter condition is needed for the integrand to descend from $\underline{\mathcal M}(\alpha)$ to $\mathcal M$.

Our goal is, following here (a form of) Joyce's proposal from~\cite[Problem 7.71(e)]{Joy}, to give universal expressions for these integrals in terms of the analogous integrals associated to $\mathcal M(\beta)$ with $\operatorname{rank}(\beta) = 0$ and pure dimension $2$.

\subsection{} The main result of~\cite{FT} is that the virtual Euler characteristics of $\mathcal M(\alpha)$ for all $\alpha$ are uniquely determined by those of smaller rank, which inductively reduces all invariants to those of sheaves of rank $0$.

Our main theorem is its analogue, and is as follows.

\begin{thm}\label{mainthm} \textit{For any $\alpha \in K(X)$ with $\operatorname{rank}(\alpha) > 0$, any descendant integral given by the formula (\ref{desccorr1}) against Joyce's virtual class, can be written as a universal expression in the descendant integrals over various moduli spaces $\mathcal M(\beta)$ of Gieseker semistable sheaves of rank $0$ and pure dimension $2$.}
\end{thm}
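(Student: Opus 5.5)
The plan follows the Feyzbakhsh--Thomas strategy, transported to the Fano setting through Joyce's wall-crossing formalism for his virtual classes. The argument is an induction on $\operatorname{rank}(\alpha)$. The base case, rank $0$ with pure dimension $2$, is tautological. For $\operatorname{rank}(\alpha)=r>0$, the first step is to twist $\alpha$ by a large multiple of $H$ and tensor with a suitable line bundle $\mathcal O(-n)$. Since descendant integrals of weight $0$ transform universally under tensoring by line bundles (the universal sheaf simply gets twisted, and $\operatorname{ch}(\mathcal U\otimes L)=\operatorname{ch}(\mathcal U)\operatorname{ch}(L)$), we may replace $\alpha$ by a normalized class. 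We then consider the joint moduli of pairs $\mathcal O(-n)\to E$ or, dually, of objects of class $\alpha-[\mathcal O(-n)]$ in the tilt-stability heart of \cite{BMT}. Along a path in the space of weak stability conditions $\nu_{b,w}$ we move from the large-volume limit, where semistable objects of class $\alpha$ are exactly Gieseker semistable sheaves, down to a point near the Brill--Noether wall. There, by the generalized Bogomolov--Gieseker inequality, the relevant objects are forced to be of the form $\mathcal O(-n)^{\oplus k}\to F$ with $F$ of strictly smaller rank, or rank $0$ and pure dimension $2$.

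The second step is to apply Joyce's wall-crossing formula, which expresses the change of $[\mathcal M]^{vir}$ as Lie brackets in the vertex Lie algebra $H_*(\mathcal M^{pl})$. At each of the finitely many walls crossed, the Gieseker class $[\mathcal M(\alpha)]^{vir}$ is written as a universal combination of iterated brackets of classes $[\mathcal M(\alpha_i)]^{vir}$ with $\sum\alpha_i=\alpha$ and with each $\alpha_i$ either a multiple of $[\mathcal O(-n)]$, whose class is explicit because $\mathcal O(-n)$ is rigid on a Fano threefold, or of rank $<r$. Pairing with a descendant $\tau$ turns brackets into sums of descendant integrals over the factors. This uses the combinatorial structure of the lattice vertex algebra: the bracket with a point-like class is given by an explicit residue formula involving $\chi(\alpha_i,\alpha_j)$ and the $\operatorname{ch}$ insertions. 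The evenness of $K_X$ is used here so that the sign/pairing $\chi$ becomes symmetric enough, that is, so that the antisymmetric part $\chi(\alpha,\beta)-\chi(\beta,\alpha)$ has the right parity for the vertex algebra to be well defined without extra signs. This lets descendants be transported across brackets.

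The step I expect to be the main obstacle is controlling what happens at the final wall. In the Calabi--Yau case one inverts an explicit relation, whereas here the pair invariants near the Brill--Noether wall must be shown to be computable from classes of lower rank. Only then does the induction close, expressing $I(\tau)$ for $\alpha$ through lower-rank data. To handle this I would pass to $K$-theoretic invariants, using Blanc $K$-theory and the class-D hypothesis to set up $K$-theoretic Joyce classes. I would then argue that the generating series in $n$ of pair invariants is a rational function (or polynomial) determined by finitely many coefficients. Taking $n\gg0$, the Gieseker term can be isolated as the unique unknown coefficient and solved for. Finally, weight-$0$ descendants are recovered from the $K$-theoretic ones via the Chern character isomorphism.
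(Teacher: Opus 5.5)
There is a genuine gap, and it sits at the step you flag as the main obstacle. At the Joyce--Song wall for $v_n=\mathbf v-[\mathcal O(-n)]$, the rank-$r$ data of class $\mathbf v$ enter only through the stratum with center $\mathcal M(\mathbf v)\times\mathcal M(\mathcal O(-n)[1])$, that is, through $\chi(\mathcal M(v_n),\tau\otimes[\epsilon_{\mathbf v},\epsilon_{[\mathcal O(-n)[1]]}])=\chi(\mathcal M(\mathbf v),\tau'\otimes\epsilon_{\mathbf v})$.

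In the Calabi--Yau case this bracket is multiplication by a nonzero integer, and one simply divides. Here the unknown is a whole linear functional on descendants, and the wall-crossing only evaluates it on the image of $\tau\mapsto\tau'$. What must be proved is that every descendant on $\mathcal M(\mathbf v)$, in degrees up to the virtual dimension, is of the form $\tau'$. Dually, bracketing with $\bar u$, where $u=e^{\alpha}\otimes 1$ and $\alpha=[\mathcal O(-n)[1]]$, must be injective on the relevant graded piece of the Lie algebra $\mathbf V(L)/\operatorname{im}T$.

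This is the content of the appendix. Since $Q(\alpha,\alpha)=2\chi(\mathcal O_X)=2$, the operator $u_0$ acts on $e^{\mathbf v}\otimes F_L$ as $W_{-(1+Q(\alpha,\mathbf v))}\otimes\mathrm{id}$. This is injective on $\Lambda_d$ once $-1-Q(\alpha,\mathbf v)\ge d$, by the basis $t_\lambda$ (via Cai--Jing), and that degree bound holds for $n\gg0$ by asymptotic Riemann--Roch. Passing from the vertex algebra to the quotient by $\operatorname{im}T$ uses $\gamma$-normalized lifts, for an explicitly constructed class $\gamma$ with $Q([\mathcal O(-n)],\gamma)=0\neq Q(\mathbf v,\gamma)$.

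Your plan to show that the generating series in $n$ is rational, and to solve for the Gieseker term as ``the unique unknown coefficient'', does not touch this. No rationality in $n$ is available or argued. Varying $n$ would only help if the images of the maps $\tau\mapsto\tau'_n$ were shown to span, which is the same injectivity problem.

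Second, your Step 2 cannot be run as stated. Objects of class $v_n$ in $\mathcal A_b$ are complexes, and Joyce's homological classes and wall-crossing in $H_*(\mathcal M^{pl})$ need a framing functor that is not known to exist for these tilted hearts. This, not the last wall, is why the paper works $K$-theoretically throughout. It uses the Karpov--Moreira $\epsilon$-classes, built from Halpern-Leistner's non-abelian localization. That in turn requires three inputs:
\begin{itemize}
\item quasi-smoothness of the stacks in $\mathcal A_b$, i.e.\ $\operatorname{Ext}^3(F,F)=0$ for positive $\nu_{b,w}$-semistable $F$, proved by a Mozgovoy-type argument deforming the central charge in $b$;
\item proper good moduli spaces;
\item (pseudo-)$\Theta$-stratifications.
\end{itemize}

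The return to cohomology happens only on moduli of sheaves. It uses Grothendieck--Riemann--Roch together with Adams operations to realize each cohomological descendant as $\operatorname{ch}$ of a $K$-theoretic one modulo degrees above the virtual dimension; it is not a formal Chern character isomorphism.

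Finally, the large-volume chamber gives tilt stability, not Gieseker stability. The evenness of $K_X$ is what lets one compare tilt stability with $-K_X/2$-twisted Gieseker stability; it plays no role in vertex-algebra signs.
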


\begin{remark} Modulo some technical restrictions on $X$, this is a counterpart of the main result of \cite{FT}. 
\end{remark}

\subsection{Outline.}
The main instrument of \cite{FT} is the theory of motivic wall-crossing for moduli spaces of perfect complexes on Calabi-Yau threefolds developed by Joyce and Song (\cite{JS}). Broadly speaking, their strategy is to pass from Gieseker stability to a certain Bridgeland-type weak stability condition, and perform the Joyce-Song manipulation for the moduli of objects in the corresponding heart of the derived category (which consists of complexes rather than of sheaves). This procedure is close to the replacement of a stable pair by its cokernel.

The natural counterpart of this method in the Fano world is the recent wall-crossing formalism of D. Joyce (\cite{Joy}). However, its methods work only for \textit{moduli spaces of sheaves} as opposed to complexes. Indeed, in \cite{Joy}, one needs a \textit{framing functor} to vector spaces, and, for the heart of a typical Bridgeland-type stability condition, it is unknown whether such a functor exists. Without this functor, it is unclear how to adapt the machinery of \cite{Joy}.

Fortunately, in the (even more) recent work \cite{KM} we reproved the $K$-theoretic counterpart of Joyce's results, by a different method which does not require any framing functor (this work was heavily inspired by previous investigations of Henry Liu, \cite{Liu}).

The main idea of this work is to apply \cite{KM} to the set-up of Feyzbakhsh-Thomas. In the process some new technical difficulties arise; in particular, handling the combinatorics of descendants (our solution heavily relies on the vertex algebra formalism of \cite{Joy}).

We hope that the present work can be of some interest as a hands-on introduction to \cite{KM}.

\subsection{The contents of the paper.} The outline of our work is as follows.

Section 2 contains preliminaries. We also set up the notation for various rigidifications and for $K$-theoretic decendents.

Section 3 consists of a summary of \cite{KM} and various works by D. Halpern-Leistner and his coauthors. We also check some assumptions like the existence of proper good moduli spaces for moduli spaces for Feyzbakhsh-Thomas stability conditions. The notable exception is the condition of quasi-smoothness which we establish in Section 5.

Section 4 contains a summary of \cite{FT} in just enough detail as needed for Section 6.

Section 6 is the main section, in which we do the actual ($K$-theoretic) wall-crossing.

Section 7 deduces the cohomological statement of our main theorem from the $K$-theoretic one, using various forms of Riemann-Roch fortified by the use of Adams operations.

The Appendix (joint with M. Moreira) proves an important but  technical statement about descendants using the vertex algebra formalism of \cite{Joy}. This part only occurs for Fano threefolds, and is not in the original Calabi-Yau case of \cite{FT}.

\subsection{Acknowledgments.} The present work owes everything to the unerring advice of Davesh Maulik, who rescued it from desperate situations at least twice, and to Miguel Moreira’s wise observation: “A map from the larger vector space to the smaller one is always surjective, what?”.

I am also very glad to thank Dominic Joyce and Richard Thomas for helping me to cross several walls, and for patiently listening to my attempts to explain the contents of the present paper, in June 2026.

I am grateful to Roman Bezrukavnikov, Chenjing Bu, Pavel Etingof, Andrés Ibáñez Núñez, Vasily Krylov, and Leonid Positselski for various related discussions.

\section{Preparation}\label{overview}

\subsection{Virtual structure sheaves and rigidifications}\label{virrig} Let us recall some features related to  derived algebraic stacks; these are virtual structure sheaves and rigidifications.

We will often work in the following situation: $\underline{\mathbfcal M}$ is a derived  stack of perfect complexes, sheaves, or pairs on the Fano variety $X$; $\iota^{cl}: \underline{\mathcal M} \to \underline{\mathbfcal M}$ is a classical truncation of $\underline{\mathbfcal M}$. $\underline{\mathbfcal M}$ is equipped with a natural free $B\mathbb G_m$-action.

In such a situation, as in~\cite[Definition 7.35]{Joy}, one forms a rigidification $\mathbfcal M$ of $\underline{\mathbfcal M}$ \footnote{Here our notation is different from the one in~\cite{HLD}!} by removing the $B\mathbb G_m$-action. The same can be done for $\underline{\mathcal M}$. There are natural morphisms $\pi: \underline{\mathcal M} \to \mathcal M$, and $\Pi: \underline{\mathbfcal M} \to \mathbfcal M$.

We suppose that $\underline{\mathbfcal M}$ is quasi-smooth, that is, its cotangent complex has the cohomological amplitude $[-1, 1].$ Then, $\mathbfcal M$ is quasi-smooth as well (see~\cite[Eq. (7.49)]{Joy}).

The essential properties for us are the following:

1) $\mathcal M$ is equipped with its \textit{$G$-theoretic virtual structure sheaf} $\mathcal O^{vir}_{\mathcal M} = \oplus H^i(\mathcal O_{\mathbfcal M})[i] \in G(\mathcal M)$ (see~\cite[Subsection 6.1]{HLH});

2) the classical truncation of $\mathbfcal M$ is $\mathcal M$;

3) each perfect complex $F$ on $\underline{\mathbfcal M}$ has a canonical weight decomposition $$F = \oplus F_i$$ so that $F_0$ descends uniquely to $\mathbfcal M$ (\cite[Subsection 1.0.4]{HLVer})

4) $R\Pi_*F = F_0$; 

5) For any $K$-theory class $F$ on $\mathcal M$, whenever their terms are defined, the equalities $$\chi(\underline{\mathbfcal M}, F) = \chi(\underline{\mathcal M}, \mathcal O^{vir}_{\mathcal M} \otimes \iota^{cl*}F) = \chi(\mathbfcal M, \Pi_* F) = \chi(\mathcal M, \pi_*(F) \otimes \mathcal O_{\mathcal M}^{vir})$$ hold. 

Moreover, these are all equal to
$$\chi(\underline{\mathbfcal M}, F_0).$$

These equalities always hold whenever $\mathcal M$ admits a proper good moduli space (see~\cite[Subsection 6.1]{HLH})

\subsection{$K$-theory and the descendants}\label{KThdescend}

Let us now introduce  $K$-theoretic descendants. 

The typical object for us will be some derived moduli stack $\underline{\mathbfcal M}$ of complexes of coherent sheaves on a smooth proper variety $X$, which admits a universal complex $\mathbfcal U \in \operatorname{Perf}(\underline{\mathbfcal M}\times X).$

For  a collection $\underline{\mathbfcal M}_1,$ $\ldots,$ $\underline{\mathbfcal M}_n$ of such derived stacks, each equipped with its own universal complex $\mathbfcal U_i$, let us consider (the pull-back of) $\mathbfcal U_i$ as a complex on $\prod \underline{\mathbfcal M}_i \times X$, and let us denote the projections from $\prod \underline{\mathbfcal M}_i \times X^n$ to $\prod \underline{\mathbfcal M}_i$, and to $X^n$ by $\pi_1$ and $\pi_2$, respectively.

As it will be explained in Section~\ref{cohtrans}, we would like to work with several copies of $X$. Namely, consider $\prod \underline{\mathbfcal M}_i \times \prod_{j=1}^k X.$ This carries universal complexes $\mathbfcal U_i^j$ where $i$ is as above, and $j$ indicates the copy of $X$ with respect to which the complex is taken. Let $\pi_1:\prod \underline{\mathbfcal M}_i \times \prod_{j=1}^k X \to \prod \underline{\mathbfcal M}_i$ be as above, and let $\pi_2^j$ be the projection to the $j$-th $X$-factor.

\begin{definition} \textit{The descendant classes} are  elements of $K(\prod_{i=1}^n \underline{\mathbfcal M}_i)$ which can be represented as linear combinations of the expressions of the form \begin{align}\label{descform}\mathcal F:= \pi_{1*} (\prod_{\lambda, i, j, k}\mathbb S_{\lambda} (\operatorname{Hom}_X(\mathbfcal U_i^k, \mathbfcal U_j^k))  \otimes \prod_j \pi_2^{j*}\upsilon_j \otimes \\ \prod_{\nu, a, b} \mathbb S_{\nu}( \mathbfcal U_a^b) \otimes \prod_{\kappa,c, d} \mathbb S_{\kappa}( \mathbfcal U_c^{d\vee})).\end{align}
where $\mathbb S_{\lambda}$ denotes the Schur functor, and all products are taken over arbitrary finite sets of partitions $\{\lambda,  \nu, \kappa\},$ indices $i,$ $j,$ $k$, integers $a, b, c, d$, and elements $\upsilon_j \in K(X).$
\end{definition}

Here, an expression of the form $\operatorname{Hom}(\mathbfcal U_i, \mathbfcal U_j)$ means the pull-back to $\prod \underline{\mathbfcal M}_i$ of the corresponding over $\underline{\mathbfcal M}_i \times \underline{\mathbfcal M}_j$ (see~\cite[7.36]{Joy}).

Further, by $K$-theory of some stack, e.g. by $K(\prod \underline{\mathbfcal M}_i)$, we mean in the present paper the direct sum of even and odd Blanc's topological K-theory groups of the category of perfect complexes on the stack in question (see \cite{Blanc} and \cite[Appendix A]{KM}).

By $\pi_{i*}$ we mean the derived  proper pushforward functor $R\pi_{i*}$ defined for these groups in \cite{K}. This agrees with the following convention. 

\noindent \textbf{Convention 1.} Throughout this paper, all $(-)^*$-, $(-)_*$-, $\operatorname{Hom}$-, $(-)^{\vee}$-, and $\otimes$-functors on the derived categories of perfect complexes on derived stacks and varieties, as well as the corresponding $K$-theoretic operations, are assumed to be the derived ones. In particular, $\pi_{1*}$ above is $R\pi_{1*}.$

\begin{remark} If the definition of descendants above is too abstract, one may think of $\iota^{cl*}(\mathcal F)$  instead of $\mathcal F$ where $\iota^{cl}$ is the classical truncation $\iota^{cl}: \prod \underline{\mathcal M}_i \to \prod \underline{\mathbfcal M}_i.$ As in~\cite[Definition 7.34]{Joy}, this is given by the same formula as (\ref{descform}) but now with  usual universal complexes $\mathcal U_i$ over $\underline{\mathcal M}_i$ instead of the derived universal complexes, that is, instead of $\mathbfcal U_i$ over $\underline{\mathbfcal M}_i$. \end{remark}

The numerical version of these constructions is as follows.

\begin{definition}\label{descnum} $K$-theoretic descendant integrals are the numbers given by formulas of the form \begin{equation}\label{descint}\chi(\prod \underline{\mathbfcal M}_i, \mathcal F) = \chi(\prod \underline{\mathcal M}_i, \iota^{cl*}(\mathcal F)\otimes \mathcal O^{vir})\end{equation}

where $\mathcal O^{vir} \in G(\prod \underline{\mathcal M}_i)$ is the virtual structure sheaf.
\end{definition}

\begin{remark}
In fact, we will later (that is, in Section~\ref{cohtrans}) see that the more reasonable idea is to take the integral not against $\mathcal O^{vir}$ but against the \textit{$\epsilon$-class} of \cite{KM}.
\end{remark}

When there is only one factor in the product of the moduli stacks which we consider, (\ref{descint})  should be compared with the cohomological descendants, that is, the expressions of the form 

\begin{align}
\int_{[\mathcal M]} P(\operatorname{ch}_{k_i}(\gamma_i)), \nonumber\\
P \in \mathbb C[\operatorname{ch}_{k_i}(\gamma) \ | \ \gamma \in H^*(X)] \nonumber\\
 \operatorname{ch}_{k_i}(\gamma) := \pi_{{\mathcal M}_*}(\operatorname{ch}_{k_i}(\mathcal U) \cup \pi_X^*(\gamma)) \label{cohdesc}
\end{align}

where $[\mathcal M]$ is some chosen virtual fundamental class on $\mathcal M$ (for example, the one introduced by Joyce in~\cite{Joy}), $\gamma \in H^*(X),$ and $P$ is such that $P(\operatorname{ch}_{k_i}(\gamma_i))$ descends from $\underline{\mathcal M}$ to $\mathcal M$. 

In this case, $P$ is known as a weight $0$-descendant, and the integral does not depend on the choice of the universal sheaf, see~\cite[Section 2.4]{BLM}.

\begin{remark}
In fact, in  cohomology, one may also consider several $\mathcal M$-factors, and write down an expression of the form (\ref{descform}). 

However, as it can be seen from~\cite{Gross} and \cite{BLM}, this does not give any new information. Indeed, the elements in question can be formally expressed as  polynomials  in terms of various classes of the form (\ref{cohdesc}) using the K\"{u}nneth theorem. 

The K\"{u}nneth theorem, however, does not hold in $K$-theory, so the consideration of several factors in formula (\ref{descform}) is no longer redundant.

We give a more detailed exposition in Section~\ref{cohtrans}.
\end{remark}

Let us end with the following convention.

\textbf{Convention 2.} Whenever we talk about a descendant (say, $\psi$) on some rigidified stack $\mathbfcal M$ (or a product of rigidified stacks), we really mean that $\psi$ is a descendant on $\underline{\mathbfcal M}$ which descends to $\mathbfcal M$.

\subsection{Some notation}\label{Kthsec}
Let us now introduce more of the main objects of our study. 

The general context is almost identical to the one of \cite{FT}, except now in the Fano setting.

As above, $X$ is a smooth projective, anticanonically polarized Fano $3$-fold over $\mathbb C$. We suppose that $X$ is in class $D$ in the sense of~\cite[Definition 4.3.6]{Gross}, that is, the natural morphism from semi-topological $K$-theory to the topological one is an isomorphism, $K^{s-t}(X) = K^{top}(X)$: for example, rational $3$-folds satisfy this condition.

Let $D^b(X)$ stand for the bounded derived category of $X$. Of particular interest to us will be the following subcategories of $D^b(X)$:

\begin{equation}\label{AbCat}\mathcal A_b  := \{E = [E_0 \xrightarrow{d} E_1] \in D^b(X) \ | \  \mu_H^+(\operatorname{ker}(d))\leq  b, \ \mu_H^-(\operatorname{coker}(d)) > b\}, \ b \in \mathbb R,\end{equation}

where $\mu_H$ is the notation for the slope of a coherent sheaf:
\begin{equation}
    \mu_H(\mathcal F) = 
    \begin{cases}
      \frac{\operatorname{ch}_1(\mathcal F).H^2}{\operatorname{ch}_0(\mathcal F).H^3} \ \text{if} \operatorname{ch}_0(\mathcal F) \neq 0, \ \\
      +\infty \ \text{if} \operatorname{ch}_0(\mathcal F) = 0, \ \\
    \end{cases}
\end{equation}
and the superscript $\pm$ is for maximal (resp., minimal) slope of the corresponding Harder-Narasimhan factor.

The following slope function (for any $w > \frac{1}{2}b^2$) defines a (weak) stability condition on $\mathcal A_b$:

\begin{equation}\label{slope}
    \nu_{b, w}(E) = 
    \begin{cases}
      \frac{\operatorname{ch}_2(E).H - \operatorname{ch}_0(E).wH^3}{\operatorname{ch}_1^{bH}(E).H^2} \ \text{if} \operatorname{ch}_1^{bH}(E).H^2 \neq 0, \ \\
      +\infty \ \text{if} \operatorname{ch}_1^{bH}(E).H^2 = 0, \ \\
    \end{cases}
\end{equation}

where $\operatorname{ch}_1^{bH}(E).H^2 = \operatorname{ch}_1(E).H^2 - \operatorname{ch}_0(E).bH^3$ is the first graded piece of the $H$-twisted Chern character $\operatorname{ch}^{bH}(E) = \operatorname{ch}(E)e^{-bH}.$

In \cite{BMT}, $\nu_{b, w}$ is proved to  have the Harder-Narasimhan property. Moreover, by \cite[Appendix C]{FT}, it coincides with a certain Bridgeland stability condition on the set of objects $E$ with $\nu_{b, w}(E) < +\infty$.

Throughout this note, we fix a triple of integers $p_1, p_2, q \in \mathbb N$, and a class $\mathbf{v} \in K(X)$ with the Chern character $\operatorname{ch}(\mathbf{v}) = (r, D, -\beta, -m) \in H^{2\bullet}(X, \mathbb Q)$.

In Section~\ref{stablewc} we will ask these data to satisfy the inequalities

$$D.H^2 = 0, -p_1 \leq \beta.H \leq p_2, m \leq q.$$

(We will get rid of the first assumption in Subsection~\ref{sec: arbcl}.)

We also pick $n = n(p_1, p_2, q, \mathbf v) \in \mathbb N$ large enough. More precisely, the bounds from \cite[p. 7]{FT}, as well as the bound from Subsection~\ref{rk2wc} below, should be satisfied. 

Our strategy (following \cite{FT}) is to relate the invariants of the class $\mathbf v$ with the ones associated with lower rank sheaves by carrying out wall-crossing for the class $v_n := \mathbf{v} - [\mathcal O(-n)]$.

\section{Wall-crossing}\label{wcp}
\subsection{The results of Karpov-Moreira}\label{KM}
The recent work ~\cite{KM} organizes the machinery of Subsection~\ref{HLRes} below as follows.

Suppose that all assumptions of \cite{KM} hold for some abelian category $\mathcal A$, for some subset $\mathfrak A$ of classes in $C(\mathcal A)$, and for some stability conditions $\mu, \mu_0$; in particular, all derived stacks $\mathbfcal M^{\mu}({\alpha}),$ and $\mathbfcal M^{\mu_0}({\alpha})$ of $\mu$- and $\mu_0$- semistable objects, respectively, for $\alpha \in \mathfrak A$, are quasi-smooth, and have proper good moduli spaces. \footnote{Note that we work with the rigidified stacks in the present subsection.} Suppose also that objects in classes from $\mathfrak A$ admits Harder-Narasimhan filtrations for both $\mu$, $\mu_0$. We will, finally, always suppose that \textit{any two} stability conditions we consider can be connected by \textit{a path of stability conditions} in the terminology of \cite{KM}.

There exists an  \footnote{Note that no stability condition is imposed here.} \textit{operational $K$-homology group} $$\mathbb K:=\bigoplus_{\alpha \in \mathfrak A} K_*(\mathbfcal M(\alpha)).$$ It should be thought of as an analogue of homology in the world of Blanc's $K$-theory (for the formal definition and more properties, see \cite{KM}). In particular, for each $\alpha$, there exists a natural evaluation functional $$K(\mathbfcal M(\alpha)) \otimes K_*(\mathbfcal M(\alpha)) \to \mathbb C.$$

The derived structure sheaves define canonical elements of $\mathbb K$, so it's natural to denote this functional by $$\tau \otimes \phi \mapsto \chi(\mathbfcal M(\alpha), \tau \otimes \phi):$$ indeed, when $\phi$ comes from $\mathcal O_{\mathbfcal M(\alpha)}$ the resulting functional is just $$\tau \mapsto \chi(\mathbfcal M(\alpha), \tau).$$

The following propositions are the main results of \cite{KM}.

\begin{proposition} For all classes $\alpha$ in $\mathfrak A$, and for both $\mu$ and $\mu_0$, there exist natural $\epsilon$-classes $\epsilon^{\mu}(\alpha)$, $\epsilon^{\mu_0}(\alpha) \in \mathbb K$ with the following properties.  \footnote{We reserve the right to abuse notation by removing either superscript $\mu$, or making $\alpha$ a subscript, or by doing both, whenever no confusion is likely to arise.}

\begin{enumerate}
\item[a)] If all $\mu$-semistable objects are, in fact, $\mu$-stable, the $\epsilon$-class is represented by the virtual structure sheaf:
$$\chi(\mathbfcal M(\alpha), \tau \otimes\epsilon^{\mu}(\alpha)) = \chi(\mathbfcal M^{\mu}(\alpha), \tau).$$

\item[b)] If the stacks $\mathbfcal M(\alpha)$ admit Joyce's virtual classes \cite{Joy} $[\mathcal M^{\mu}(\alpha)]^{vir}_{\mu} \in H_*(\mathbfcal M(\alpha))$ (and, in particular, the category $\mathcal A$ admits the so-called \textit{framing functor}; see~\cite{KM} for the discussion), the following formula holds

$$\chi(\mathbfcal M(\alpha), \tau \otimes \epsilon(\alpha)) = \int_{[\mathcal M(\alpha)]^{vir}_{\mu}} \operatorname{ch}(\tau)\operatorname{Td}(\mathbb T^{vir}_{\mathcal M(\alpha)}).$$
\end{enumerate}
\end{proposition}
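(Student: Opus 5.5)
The plan is to construct the $\epsilon$-classes by the auxiliary-pair method of \cite{KM} (following \cite{Liu} and the strategy of \cite{Joy}), rather than directly on $\mathbfcal M(\alpha)$. Property a) then reduces to a projective bundle computation, and property b) to virtual Riemann--Roch.

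\textbf{Construction.} For each $\alpha\in\mathfrak A$ and each stability condition $\mu$ (or $\mu_0$), consider an auxiliary moduli stack of pairs $(E, V\to \Phi(E))$. Here $E$ is $\mu$-semistable of class $\alpha$, $\Phi$ is a suitable perfect complex built from the universal object (for instance $\operatorname{Hom}(\mathcal O(-n),E)$ for $n\gg0$), and the pair is stable for a pair-stability designed so that no strictly semistable pairs exist. By the assumptions recalled above (quasi-smoothness, proper good moduli spaces), this pair space is a quasi-smooth stack with proper good moduli space and no strictly semistable objects. It therefore carries an honest virtual structure sheaf, and $\chi(-\otimes\mathcal O^{vir})$ is a well-defined functional on descendants, using the equalities of Subsection~\ref{virrig}. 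I would then define $\epsilon^{\mu}(\alpha)\in\mathbb K$ recursively in $\alpha$: the pushforward of the virtual structure sheaf of the pair space to $\mathbfcal M(\alpha)$ is written as a finite sum over decompositions $\alpha=\alpha_1+\dots+\alpha_k$ into classes of equal $\mu$-phase. Each summand is a universal combination of products of $\epsilon(\alpha_i)$, twisted by the $K$-theoretic Euler class of $\Phi$. Solving for the $k=1$ term, using an invertible leading coefficient (a $K$-theoretic analogue of $\chi(\Phi(E))$), defines $\epsilon(\alpha)$ in terms of $\epsilon$ of strictly smaller classes.

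\textbf{Well-definedness.} The construction involves choices: the framing datum $n$ and the auxiliary rank. To show independence I would compare two choices by a master space with a $\mathbb G_m$-action whose fixed loci are the two pair spaces, together with "split" loci that are products of smaller moduli. $K$-theoretic virtual localization on this master space, which is proper over a good moduli space, yields a relation. By induction and the recursion, this relation shows the two candidate $\epsilon(\alpha)$ agree. I expect this to be the main obstacle. Controlling the residue of the localization contributions in $K$-theory requires Adams-operation and symmetrization tricks in place of the cohomological residue manipulations of \cite{Joy}, and one must check that the split loci contribute exactly through the $\mathbb K$-valued Lie-type bracket. Here the absence of a framing functor must be circumvented, which is precisely the point of \cite{KM}.

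\textbf{Property a).} When all semistables are stable, the pair space is a projective bundle $\mathbb P(\Phi)$ over $\mathbfcal M^{\mu}(\alpha)$ of virtual rank $\chi(\Phi(E))$. The recursion has no lower-order terms, and the $K$-theoretic projective bundle formula $\chi(\mathbb P(\Phi),\mathcal O(j))$ gives exactly $\chi(\Phi)\cdot\chi(\mathbfcal M,\tau)$, so dividing by the leading coefficient yields a). \textbf{Property b).} When Joyce's classes exist, apply virtual Riemann--Roch to each pair space: $\chi(\tau\otimes\mathcal O^{vir})=\int \operatorname{ch}(\tau)\operatorname{Td}(\mathbb T^{vir})$. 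The cohomological pair-space integrals satisfy Joyce's recursion defining $[\mathcal M(\alpha)]^{vir}_\mu$, and $\operatorname{Td}$ is multiplicative and compatible with the splitting of $\mathbb T^{vir}$ on product loci. Hence the functional $\tau\mapsto\int_{[\mathcal M]^{vir}_\mu}\operatorname{ch}(\tau)\operatorname{Td}(\mathbb T^{vir})$ satisfies the same recursion as $\epsilon(\alpha)$, and the two coincide by induction on $\alpha$.
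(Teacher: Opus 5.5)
\textbf{The construction needs a framing functor, which is exactly what the statement must avoid.} Your proposal is Joyce's pair formalism moved to $K$-theory, and its key ingredient is a framing functor. For the pairs $(E,V\to\Phi(E))$ to form an abelian category with a stability in which all semistables are stable, $\Phi$ must be exact into vector spaces and nonzero on nonzero objects. The same is needed for the pair space to be a projective bundle with an invertible leading coefficient $\chi(\Phi(E))$. The proposition is stated for any $(\mathcal A,\mathfrak A,\mu,\mu_0)$ satisfying the hypotheses of \cite{KM}. The paper applies it to the tilted hearts $\mathcal A_b$ with $\nu_{b,w}$, above all at the Joyce--Song wall, and there your $\Phi$ breaks. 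Take $\Phi=\operatorname{RHom}(\mathcal O(-m),-)$ with $m\gg n$. The wall object $\mathcal O(-n)[1]$ has $\Phi(\mathcal O(-n)[1])=R\Gamma(\mathcal O(m-n))[1]$, which is concentrated in degree $-1$. Hence $\operatorname{Hom}(\mathcal O(-m),\mathcal O(-n)[1])=0$, no pair rigidifies its $\mathbb G_m$-stabiliser, and $\chi(\Phi)<0$. As the introduction stresses, no framing functor is known on such hearts, and your master-space comparison fails for the same reason. You flag this yourself (``the absence of a framing functor must be circumvented''), but circumventing it is the whole content of the statement. As written, your argument covers only the framed situation that b) already assumes.

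\textbf{The missing idea.} In $K$-theory nothing has to be rigidified away before taking numbers (Subsections \ref{virrig} and \ref{HLRes}). Since $\mathbfcal M^{\mu}(\alpha)$ is quasi-smooth with a proper good moduli space, $\chi(\mathbfcal M^{\mu}(\alpha),\tau)$ is already defined for descendants $\tau$, even at strictly semistable points; only weight-$0$ parts contribute. These structure-sheaf functionals are the primary objects, and the $\epsilon$-classes are related to them by a universal combinatorial transformation. Wall-crossing and the bracket come from Halpern-Leistner's non-abelian localization (Theorem \ref{nonabl}) on the Harder--Narasimhan $\Theta$-stratification. Its centre terms $E(\alpha)$ in (\ref{Ea}) are built only from $\operatorname{RHom}(E_i,E_j)$, so no framing enters. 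Pairs appear only as a byproduct (Proposition \ref{pairs}). In this approach a) is essentially tautological, since without strictly semistables there are no correction terms.

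\textbf{Part b) and the projective-bundle step.} Part b) is a comparison with Joyce where a framing functor exists, and your Riemann--Roch induction has the right shape there. It does need the compatibility of the $K$-theoretic bracket with Joyce's vertex-algebra bracket under $\operatorname{ch}\cdot\operatorname{Td}$, which the paper records as a separate result of \cite{KM}. This is more than multiplicativity of $\operatorname{Td}$: the $\operatorname{Sym}\otimes\det$ terms of (\ref{Ea}) must be matched with Joyce's bracket. Your projective-bundle step in a) is also off, even in the framed case. $K$-theoretic pushforward gives $R\pi_*\mathcal O=\mathcal O$ and $R\pi_*\mathcal O(j)=\operatorname{Sym}^j\Phi^\vee$, so no factor $\chi(\Phi)$ appears. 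That multiplicity is a cohomological phenomenon, and the normalisation of your recursion would have to be redone.
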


\begin{definition} We will use the term '$K$-\textit{theoretic descendant integrals'} for the expressions of the form $$\chi(\prod \mathbfcal M(\alpha_i), \tau \otimes (\boxtimes \epsilon^{\mu_i}(\alpha_i)))$$ where $\tau$ is some descendant. It is shown in~\cite{KM} that this definition of descendant integrals is equivalent, via a natural combinatorial transformation, to the one given via derived structure sheaves in Subsection \ref{KThdescend}. We will also use the notation $\chi(\prod \mathbfcal M^{\mu_i}(\alpha_i), \tau \otimes \epsilon)$ for the same expression.
\end{definition}

 \begin{proposition} There exists a natural Lie bracket operation $[-,-]$ on $\mathbb K$ (graded with respect to $\mathfrak A$-grading) such that the following formula holds:

\begin{equation}\label{eq:epsilonwc}
\begin{aligned}
\varepsilon^{\mu_0}(\alpha)
    &=\sum_{\alpha_1+\ldots+\alpha_n=\alpha}\tilde U(\alpha_1, \ldots, \alpha_n; \mu, \mu_0) \\
    &\qquad\cdot [[\ldots [\varepsilon^{\mu}(\alpha_1), \varepsilon^\mu(\alpha_2)], \ldots, ],\varepsilon^{\mu}(\alpha_n)].
\end{aligned}
\end{equation}

    for some universal coefficients $\widetilde{U}(\alpha_1, \ldots, \alpha_n; \mu, \mu_0)$.
 \end{proposition}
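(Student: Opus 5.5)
The plan follows the route of \cite{KM}, building everything from the structure of the stacks $\mathbfcal M(\alpha)$ and avoiding any framing functor. First I would construct the bracket on $\mathbb K$. The direct sum map $\Phi_{\alpha,\beta}\colon \underline{\mathbfcal M}(\alpha)\times\underline{\mathbfcal M}(\beta)\to\underline{\mathbfcal M}(\alpha+\beta)$, the scaling $B\mathbb G_m$-action, and the $K$-theory classes of the Ext complexes $\operatorname{Hom}_X(\mathbfcal U_\alpha,\mathbfcal U_\beta)$ and $\operatorname{Hom}_X(\mathbfcal U_\beta,\mathbfcal U_\alpha)$ together give a ``$K$-theoretic vertex algebra''. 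Its state-space operation is
\[
\phi\otimes\psi \mapsto \Phi_*\bigl(z\text{-twisted }(\phi\boxtimes\psi)\otimes \lambda_{-z}(\mathrm{Ext}\text{-terms})\bigr),
\]
where $z$ is the weight of the $\mathbb G_m$ acting on the second factor. The bracket on the rigidified groups is then the residue, in the multiplicative variable $z$, of this multiplication. I would define it operationally: a class in $K_*$ is a functional on $K(\mathbfcal M)$, and pulling back descendants along $\Phi$ and expanding in $z$ makes everything well defined. The Jacobi identity and antisymmetry should follow formally from associativity and locality of the multiplication, exactly as in the Borcherds and Joyce setting.

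Second, I would define the $\epsilon$-classes. When $\mu$-semistable equals $\mu$-stable, $\epsilon^\mu(\alpha)$ is just the virtual structure sheaf pushed into $\mathbb K$; this is property a) of the preceding proposition. In general I would define $\epsilon^\mu(\alpha)$ by induction on $\alpha$. The inductive step uses auxiliary pair or flag moduli that are stable, hence carry honest virtual structure sheaves; this is where the quasi-smoothness and proper good moduli space assumptions enter. One then shows, via a master-space or virtual localization argument, that the auxiliary invariants equal Lie-type expressions in lower $\epsilon$'s with the universal combinatorial coefficients. Independence of the auxiliary choices would be proved by a second such localization comparing two choices.

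Third comes wall-crossing. Since $\mu$ and $\mu_0$ are joined by a path of stability conditions, and classes in $\mathfrak A$ have only locally finitely many walls (by boundedness), it suffices to cross a single wall. At a wall I would use Halpern-Leistner's $\Theta$-stratification of $\underline{\mathbfcal M}(\alpha)$, or equivalently a $\mathbb G_m$-master space interpolating between the two sides. Virtual localization expresses the difference of integrals against $\epsilon^{\mu_+}$ and $\epsilon^{\mu_-}$ as residues over fixed loci. These fixed loci are products $\prod\mathbfcal M^{\mu_{\text{wall}}}(\alpha_i)$, and their normal-bundle contributions are precisely the Ext-terms $\lambda_{-z}(\cdots)$ appearing in the bracket. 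So each residue is an iterated bracket of $\epsilon$'s. Composing the wall-crossings along the path and using Joyce's combinatorics of the coefficients $U$ gives \eqref{eq:epsilonwc} with universal $\widetilde U$.

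The main obstacle, I expect, is this third step: matching the localization residues on the master space with the algebraically defined bracket. One must control the $\mathbb G_m$-weights of the virtual normal bundle on the fixed loci, check that equivariant $K$-theoretic localization is valid for these (non-proper, Artin) stacks using only proper good moduli spaces, and show that higher-order strata assemble into iterated brackets rather than more general products. I would first treat walls where only two classes $\alpha_1+\alpha_2=\alpha$ destabilize, and verify the single-bracket term there. I would then bootstrap by induction on the number of destabilizing factors, using the Jacobi identity to reorder terms.
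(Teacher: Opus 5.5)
Your first step, a multiplicative vertex algebra in Liu's style with the bracket obtained as a residue, fits the framework. There is, however, a genuine gap in your second step, and in the master-space option of your third step.

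\textbf{The gap.} Defining $\epsilon^\mu(\alpha)$ through auxiliary stable pair or flag moduli $V\otimes L\to E$, and comparing choices by master-space localization, is exactly Joyce's machinery. It requires a framing functor to vector spaces. This contradicts your stated aim of avoiding framings, and such a functor is unavailable precisely where the proposition is used. In Subsection~\ref{JSw} the formula is applied in the tilted heart $\mathcal A_b$ with $\nu_{b,w}$-stability at the Joyce--Song wall, where semistable objects such as $F\oplus\mathcal O(-n)[1]$ are genuine complexes. The paper stresses that no framing functor is known for such hearts, and it invokes the pair construction of Subsection~\ref{KMpairs} only for $\operatorname{Coh}(X)$. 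As written, your plan at best reproves the statement for framed categories of sheaves, not in the generality the paper needs.

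\textbf{The missing idea.} The route the paper relies on, via \cite{KM} and summarized in Subsection~\ref{HLRes}, avoids auxiliary moduli altogether. It applies Halpern-Leistner's virtual non-abelian localization, Theorem~\ref{nonabl}, directly to the $\Theta$- (or pseudo-$\Theta$-) stratification of the $\mu_0$-semistable stack by $\mu$-Harder--Narasimhan types.

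\begin{itemize}
\item This is not equivalent to a master space. Its only inputs are quasi-smoothness and proper good moduli spaces for the semistable stacks and for the centers of strata. That is exactly what settles your worry about localization on non-proper Artin stacks.
\item The centers are products $\prod_i\underline{\mathcal M}^{ss}_{\mu}(\beta_i)$ of $\mu$-semistable stacks (Proposition~\ref{domstabcon}), not of wall-semistable ones as you wrote.
\item The correction term is the explicit $E(\alpha)$ of (\ref{Ea}), a $\operatorname{Sym}$ and $\det$ of Ext complexes. Only weight-$0$ pieces have Euler characteristics (Remark~\ref{rem: weights}), and extracting them is what produces the residue underlying the bracket on $\mathbb K$.
\item This formula compares Euler characteristics over the semistable stacks themselves, not $\epsilon$-integrals. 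Passing to the $\epsilon$-classes, and seeing that only iterated brackets survive, needs Joyce's combinatorics relating $U$ and $\widetilde U$; the paper notes the coefficients agree with the motivic ones. It does not follow from reordering terms by the Jacobi identity, as your last step suggests.
\end{itemize}
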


 \begin{proposition} $[-,-]$ preserves descendants in the following sense: for any descendant $\tau$, there exists a natural descendant $\tau'$, so that $$\chi(\mathbfcal M(\alpha + \beta), \tau \otimes[\epsilon^{\mu}(\alpha), \epsilon^{\mu}(\beta)]) = \chi(\mathbfcal M(\alpha) \times \mathbfcal M(\beta), \tau'\otimes[\epsilon^{\mu}(\alpha) \boxtimes \epsilon^{\mu}(\beta)]),$$ and the same holds for iterated commutators.
 \end{proposition}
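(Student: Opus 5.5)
The plan is to unwind the definition of the bracket in \cite{KM} and push the descendant through it term by term. Recall that $[-,-]$ on $\mathbb K$ comes from the direct sum morphism
$$\Phi\colon \underline{\mathbfcal M}(\alpha)\times \underline{\mathbfcal M}(\beta)\times [*/\mathbb G_m]\to \underline{\mathbfcal M}(\alpha+\beta),$$
where the extra $\mathbb G_m$, with weight variable $z$, scales the first summand. The bracket $[\epsilon(\alpha),\epsilon(\beta)]$ is obtained by pulling back along $\Phi$ and twisting by the $K$-theoretic Euler class of the relative virtual tangent bundle. That Euler class is, up to sign conventions, $\lambda_{-1}$ of the duals of $z\operatorname{Ext}(\mathbfcal U_\beta,\mathbfcal U_\alpha)$ and $z^{-1}\operatorname{Ext}(\mathbfcal U_\alpha,\mathbfcal U_\beta)$. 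One then extracts a residue (or a constant-term coefficient) in $z$, and pushes forward to the rigidified product. So I would first write
$$\chi(\mathbfcal M(\alpha+\beta),\tau\otimes[\epsilon(\alpha),\epsilon(\beta)])=\chi\Big(\mathbfcal M(\alpha)\times\mathbfcal M(\beta),\ \operatorname{Res}_z\big(\Phi^*\tau\otimes e(z)\big)\otimes\epsilon(\alpha)\boxtimes\epsilon(\beta)\Big),$$
and define $\tau'$ to be the class inside the residue. What remains is to show that this class is a descendant and that it descends to the rigidification.

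The first step is the pullback $\Phi^*\tau$. Since $\Phi^*\mathbfcal U=z\mathbfcal U_\alpha\oplus \mathbfcal U_\beta$, each factor of (\ref{descform}) decomposes functorially:
\begin{itemize}
\item $\mathbb S_\lambda(z\mathbfcal U_\alpha\oplus\mathbfcal U_\beta)=\bigoplus z^{|\mu|}\,\mathbb S_\mu(\mathbfcal U_\alpha)\otimes\mathbb S_\nu(\mathbfcal U_\beta)$, with Littlewood--Richardson multiplicities;
\item $\operatorname{Hom}_X(\Phi^*\mathbfcal U,\Phi^*\mathbfcal U)$ splits into the four pieces $\operatorname{Hom}(\mathbfcal U_a,\mathbfcal U_b)$ with weights $z^{0},z^{\pm1}$, and its Schur functors decompose the same way;
\item dual universal sheaves behave identically.
\end{itemize}
Because the pushforward $\pi_{1*}$ commutes with these base changes, $\Phi^*\tau$ is a Laurent polynomial in $z$ whose coefficients are descendants on the product, in the sense of the multi-factor definition with several copies of $X$.

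The second step treats the Euler class. Its factors are $\lambda_{-z^{\pm1}}$ applied to $\pi_{1*}\operatorname{Hom}_X(\mathbfcal U_a,\mathbfcal U_b)$, and I would express $\lambda^k(\pi_{1*}F)$ as a descendant. The idea is to use $k$ copies of $X$: by Künneth for $\pi_{1*}$ along $X^k$, $(\pi_{1*}F)^{\otimes k}=\pi_{1*}(F^{\boxtimes k})$, and $\lambda^k$ is the sign-isotypic part of this $S_k$-representation. With $\mathbb C$-coefficients this part is cut out by the idempotent $\frac1{k!}\sum\operatorname{sgn}(\sigma)\sigma$. I therefore expect $\lambda^k(\pi_{1*}F)$ to lie in the span of pushforwards from $X^k$ of external products of the $\operatorname{Hom}(\mathbfcal U^j_a,\mathbfcal U^j_b)$ over the copies $j$. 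This is precisely why the paper allows several $X$-factors.

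This is the step I expect to be the main obstacle. One must check that the projector acting on pushforwards is realized by operations permitted in (\ref{descform}), i.e.\ by Schur functors of the $\mathbfcal U^j$ and insertions $\upsilon_j$, rather than only by an abstract idempotent. If this fails, I would fall back on Newton's identities, writing $\lambda^k$ in terms of Adams operations $\psi^j(\pi_{1*}F)$, and then on a Grothendieck--Riemann--Roch type correction (the Bott cannibalistic class) to move $\psi^j$ past $\pi_{1*}$. This uses that $\psi^j$ of a Schur functor of $\mathbfcal U$ is again a combination of Schur functors.

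The third step takes the residue in $z$. After expanding the rational function in $z$ (which has finitely many relevant terms in a fixed degree), the residue is a finite linear combination of coefficients, each a descendant by the first two steps. Descent to $\mathbfcal M(\alpha)\times\mathbfcal M(\beta)$ then follows because $\tau$ has weight $0$ and the residue kills the $z$-weight, so the total weight for each of the two remaining $B\mathbb G_m$'s is zero. Finally, for iterated commutators I would induct on $n$: apply the two-factor statement to $[\ldots,\epsilon(\alpha_{n-1})]$ and $\epsilon(\alpha_n)$, then reapply it to the resulting descendant on the first factor, noting that the statement only needs a product of stacks with universal complexes.
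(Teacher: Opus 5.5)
The paper does not prove this statement; it is quoted from \cite{KM}, where the bracket enters through the localization complex (\ref{Ea}). Your overall shape is the right one: pull $\tau$ back along the direct-sum map, multiply by weight-expanded Euler-class factors of the cross Ext complexes, keep the $z$-weight-$0$ part, and induct for iterated brackets. However, the claim in your third step that every coefficient is a descendant ``by the first two steps'' does not follow, and this is a genuine gap.

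Your two Euler-class factors are $\lambda_{-z^{-1}}(P)$ and $\lambda_{-z}(Q)$, where $P,Q$ are the duals of $\operatorname{Ext}(\mathbfcal U_\beta,\mathbfcal U_\alpha)$ and $\operatorname{Ext}(\mathbfcal U_\alpha,\mathbfcal U_\beta)$. On a Fano threefold these complexes have cohomology in several degrees, so each factor is a genuinely rational function of $z$. Their natural expansions lie in $K[[z^{-1}]]$ and $K[[z]]$ respectively, and the constant term of the product is the infinite sum $\sum_k \lambda^k(P)\lambda^k(Q)$. To get a well-defined coefficient you must re-expand one factor, e.g.
\[
\lambda_{-z^{-1}}(P)=(-z)^{-\operatorname{rk}P}\,\det(P)\,\lambda_{-z}(P^\vee).
\]
This brings in the determinant line bundle of an Ext complex. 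It is exactly the factor $\det(\mathbb L(\alpha)^+)^\vee[-\operatorname{rk}\mathbb L(\alpha)^+]$ in (\ref{complcorr}), i.e.\ $\mathcal L_E[\eta_E]$ in (\ref{Ea}).

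Nothing in your first two steps produces this factor. The determinant of a perfect complex is not a Schur functor of it, nor even a polynomial in the $\lambda$-operations of $E$ and $E^\vee$. For example, take $E=x-y$ in $\mathbb Q[x^{\pm1},y^{\pm1}]$ and put $x=y+u$. Every $\lambda^k(E)$ and $\lambda^k(E^\vee)$ with $k\geq 1$ is $O(u)$, with linear term a multiple of $y^ju$ for some $j\geq 0$ or $j\leq -2$. By contrast, $\det E=1+y^{-1}u$. So neither the Littlewood--Richardson decomposition, nor the sign projector, nor the Newton/Adams fallback can produce it. You need either a separate argument that the determinant of a cross Ext complex is (or may be replaced by) a descendant, or an enlargement of the class (\ref{descform}) to include such determinants. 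The enlargement is harmless for Section~\ref{cohtrans}, since $\operatorname{ch}(\det E)=\exp(\operatorname{ch}_1 E)$ and $\operatorname{ch}_1$ of an Ext complex is a cohomological descendant by Gross's formula, but it has to be said.

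Separately, the obstacle you single out in Step 2 is largely not one. Definition (\ref{descform}) already allows Schur functors of the cross Ext complexes $\operatorname{Hom}_X(\mathbfcal U_i,\mathbfcal U_j)$, so $\lambda^k$ and $\operatorname{Sym}^k$ of these are descendants by definition. What is not literally covered are their duals, which occur both in your Euler class and in (\ref{Ea}). For those, your Newton/Adams--Riemann--Roch fallback is the right tool, since $(\pi_{1*}F)^\vee\simeq\pi_{1*}(F^\vee\otimes\omega_X)[3]$ is again a pushforward of Schur functors of $\mathbfcal U,\mathbfcal U^\vee$ with an insertion. The sign-projector route does not land in (\ref{descform}), because the isotypic part is a pushforward from $[X^k/S_k]$, not from $X^k$ with product insertions.
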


We will also need the following. Joyce, in \cite{Joy}, introduces a Lie bracket on $\bigoplus H_*(\mathbfcal M(\alpha))$ which is a natural analogue of our $K$-theoretic bracket. 

 \begin{proposition} If, again, Joyce's classes $[-]^{vir}$ are well-defined, $$\chi(\mathbfcal M(\alpha + \beta), \tau \otimes [\epsilon^{\mu}(\alpha),\epsilon^{\mu}(\beta)]) = \int_{[[\mathcal M(\alpha)]^{vir},  [\mathcal M(\beta)]^{vir}]}\operatorname{ch}(\tau)\operatorname{Td}(\mathbb T^{vir}_{\mathcal M(\alpha+\beta)}),$$

 and the natural analogue also holds for iterated commutators.
 \end{proposition}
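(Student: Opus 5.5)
The plan is to reduce the statement to a compatibility between two Lie brackets under a Riemann--Roch transformation, and then to check that compatibility for a single bracket. Define a $\mathbb C$-linear map
$$\mathrm{RR}_\alpha: K_*(\mathbfcal M(\alpha)) \to H_*(\mathbfcal M(\alpha)),$$
characterised by the requirement
$$\chi(\mathbfcal M(\alpha), \tau\otimes\phi)=\int_{\mathrm{RR}_\alpha(\phi)}\operatorname{ch}(\tau)\operatorname{Td}(\mathbb T^{vir}_{\mathcal M(\alpha)})$$
for all $\tau$. Part b) of the first proposition says precisely that $\mathrm{RR}_\alpha(\epsilon(\alpha))=[\mathcal M(\alpha)]^{vir}$. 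It therefore suffices to prove
$$\mathrm{RR}_{\alpha+\beta}\bigl([\phi,\psi]\bigr)=\bigl[\mathrm{RR}_\alpha(\phi),\mathrm{RR}_\beta(\psi)\bigr]$$
for $\phi=\epsilon(\alpha)$ and $\psi=\epsilon(\beta)$. The case of iterated commutators then follows by induction on the number of factors, since each partial commutator is again of the required form.

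Next, unwind both brackets. The $K$-theoretic bracket of \cite{KM} is obtained as follows: pull back along the direct-sum map $\Phi:\underline{\mathbfcal M}(\alpha)\times\underline{\mathbfcal M}(\beta)\times[*/\mathbb G_m]\to\underline{\mathbfcal M}(\alpha+\beta)$, twist by $\lambda_{-1}$ of the (dual) Ext-complex $\Theta_{\alpha,\beta}$ and of its opposite, with $\mathbb G_m$-weight $t$, and take a residue in $t$ at $t=1$. This is exactly what the descendant $\tau'$ of the preceding proposition records. Joyce's bracket has the same shape in homology: pushforward along $\Phi$, twist by the Chern polynomial $c_{z^{-1}}(\Theta)$, and take $\operatorname{Res}_{z=0}$ of the vertex-algebra expression.

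I will compare the two integrands after applying $\operatorname{ch}$ and $\operatorname{Td}$. Pulled back along $\Phi$, the virtual tangent complex splits as
$$\Phi^*\mathbb T^{vir}_{\alpha+\beta}=\mathbb T^{vir}_\alpha+\mathbb T^{vir}_\beta+\Theta_{\alpha,\beta}\,t+\Theta_{\beta,\alpha}\,t^{-1}$$
up to the trivial $\mathbb G_m$-direction. The basic identity $\operatorname{ch}(\lambda_{-1}E^\vee)\operatorname{Td}(E)=e(E)$, applied to the equivariant pieces with $t=e^{z}$, converts the $K$-theoretic twist into the Euler-class twist appearing in Joyce's formula. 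The leftover $\operatorname{Td}$ factor from the rigidification direction, namely $z/(1-e^{-z})$, is exactly the Jacobian that turns $\operatorname{Res}_{t=1}\,dt/t$ into $\operatorname{Res}_{z=0}\,dz$. Grothendieck--Riemann--Roch for $\Phi$, in the form available for Blanc $K$-theory with operational $K$-homology from \cite{KM}, then identifies the two sides.

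The main obstacle I expect is to make this bookkeeping rigorous on the rigidified stacks. One must show that the weight-zero descent (Section~\ref{virrig}) commutes with $\operatorname{ch}\cdot\operatorname{Td}$. One must also check that the residue in $t$ of a rational function produced by $\lambda_{-1}$ of a non-trivial $\mathbb G_m$-representation matches the formal residue in $z$. This includes the subtlety that the $K$-theoretic expression can have poles at roots of unity other than $1$; I would argue these do not contribute, because $\Theta$ has $\mathbb G_m$-weight only $\pm1$. I would first try to prove everything for $\tau$ a single descendant of the form (\ref{descform}), where $\tau'$ is explicit, and then extend to general $\tau$ by linearity.
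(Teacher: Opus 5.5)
The paper gives no proof of this statement; it is quoted as a result of \cite{KM}, so your sketch has to stand on its own. The architecture is the natural one: pull back to the product, convert $1/\lambda_{-1}$ of the weight $\pm1$ part $N$ of $\Phi^*\mathbb T^{vir}_{\alpha+\beta}$ into Joyce's Euler-class twist via $\operatorname{ch}(\lambda_{-1}E^\vee)\operatorname{Td}(E)=e(E)$, then change residue variables. However, the bookkeeping step you single out is wrong on both counts. First, under $t=e^z$ one has $dt/t=dz$ exactly, so $\operatorname{Res}_{t=1}f(t)\,dt/t=\operatorname{Res}_{z=0}f(e^z)\,dz$ and there is no Jacobian. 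Second, in $K$-theory
$\Phi^*\mathbb T_{\mathcal M(\alpha+\beta)}=\mathbb T_{\mathcal M(\alpha)}+\mathbb T_{\mathcal M(\beta)}+N-\mathcal O$.
The $-\mathcal O$ is the relative $B\mathbb G_m$ (anti-diagonal scalars in $\operatorname{Hom}(E_\alpha,E_\alpha)\oplus\operatorname{Hom}(E_\beta,E_\beta)$). It has weight $0$, so it contributes $\operatorname{Td}=1$, not $z/(1-e^{-z})$. Your two errors cancel in the narrative, but a genuine leftover factor would change the answer, because the integrand has higher-order poles. Take the local model with $\operatorname{Ext}^1(E_\beta,E_\alpha)=\mathbb C^2$ and no other off-diagonal Ext groups. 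There the wall-crossing term is $\chi(\mathbb P^1,\mathcal O)=1=\operatorname{Res}_{z=0}(1-e^{-z})^{-2}dz$, whereas inserting your factor gives $\operatorname{Res}_{z=0}z(1-e^{-z})^{-3}dz=3/2$. The correct statement is simpler: the only Todd factors produced by $\operatorname{ch}$ are $\operatorname{Td}(N)$. These are exactly what $\operatorname{Td}(\Phi^*\mathbb T^{vir}_{\alpha+\beta})$ contains beyond $\operatorname{Td}(\mathbb T_{\mathcal M(\alpha)})\operatorname{Td}(\mathbb T_{\mathcal M(\beta)})$.

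The second gap is in the passage to the product. No Grothendieck--Riemann--Roch along $\Phi$ is needed, since everything is pulled back. What you do need is
$\chi(\mathbfcal M(\alpha)\times\mathbfcal M(\beta),\tau'\otimes\epsilon(\alpha)\boxtimes\epsilon(\beta))=\int_{[\mathcal M(\alpha)]^{vir}\times[\mathcal M(\beta)]^{vir}}\operatorname{ch}(\tau')\operatorname{Td}(\mathbb T_{\mathcal M(\alpha)}\boxplus\mathbb T_{\mathcal M(\beta)})$
for a non-split descendant $\tau'$ built from $\operatorname{Hom}(\mathbfcal U_\alpha,\mathbfcal U_\beta)$-terms. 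Künneth fails in $K$-theory, as the paper stresses. So this does not follow from your maps $\mathrm{RR}_\alpha,\mathrm{RR}_\beta$, which are characterised only by pairings with classes on a single factor. You must invoke part b) for exterior products of $\epsilon$-classes, which is the input behind Corollary~\ref{multiplefactors}. For the same reason your induction does not close as stated. The next step pairs $[\epsilon(\alpha_1),\epsilon(\alpha_2)]\boxtimes\epsilon(\alpha_3)$ against a non-split descendant on $\mathbfcal M(\alpha_1+\alpha_2)\times\mathbfcal M(\alpha_3)$. The inductive hypothesis therefore has to be formulated for exterior products of partial commutators with further $\epsilon$-factors, not for each partial commutator on its own stack.
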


\begin{remark} We discuss further the relation between homological and $K$-theoretic invariants in Subsection~\ref{cohtrans}.
\end{remark}

Let us finally mention that one can take  exterior products of elements of $\mathbb K$. Hence, the following minor generalization of the above results holds

\begin{proposition}\label{prods} Let $\mathbfcal M^{\mu_i}(\beta_i)$, $i = 1, \ldots, n$ be a collection of moduli stacks of objects in  classes $\beta_i$ for some $\beta_i \in \mathfrak A$ and stabilities $\mu_i$. Suppose also that $\mu_0$ is one more stability condition. Then the following equality holds:
\begin{multline}
\epsilon_{\beta_1}^{\mu_1} \boxtimes \ldots \boxtimes \epsilon_{\beta_i}^{\mu_0} \boxtimes \ldots \boxtimes \epsilon^{\mu_n}_{\beta_n} \\
= \epsilon_{\beta_1}^{\mu_1} \boxtimes \ldots \boxtimes
\left(\sum_{\alpha_1+\ldots+\alpha_l=\beta_i}\tilde U(\alpha_1, \ldots, \alpha_l; \mu_i, \mu_0)
\cdot [[\ldots [\varepsilon_{\alpha_1}^{\mu_i}, \varepsilon_{\alpha_2}^{\mu_i}], \ldots, ],\varepsilon_{\alpha_l}^{\mu_i}]\right)
\boxtimes \ldots \boxtimes \epsilon_{\beta_n}^{\mu_n}.
\end{multline}

for some universal coefficients $\tilde U$.

\end{proposition}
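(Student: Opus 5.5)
The plan is to deduce Proposition~\ref{prods} from the single-factor wall-crossing formula (\ref{eq:epsilonwc}) by using that the exterior product on $\mathbb K$ is multilinear. The identity only changes the $i$-th tensor factor. So it is enough to show that exterior product with the fixed classes $\epsilon^{\mu_j}_{\beta_j}$, $j\neq i$, is a linear map
$$K_*(\mathbfcal M(\beta_i)) \to K_*\Big(\prod_j \mathbfcal M(\beta_j)\Big), \qquad \phi \mapsto \epsilon_{\beta_1}^{\mu_1}\boxtimes\ldots\boxtimes\phi\boxtimes\ldots\boxtimes\epsilon_{\beta_n}^{\mu_n},$$
and then apply this map to both sides of (\ref{eq:epsilonwc}), written for the class $\beta_i$ and the pair $(\mu_i,\mu_0)$.

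The steps, in order, are as follows.

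First, recall from \cite{KM} how $\boxtimes$ is constructed on operational $K$-homology. For $\phi\in K_*(\mathfrak X)$ and $\psi\in K_*(\mathfrak Y)$, the class $\phi\boxtimes\psi$ evaluates on pulled-back classes by $\chi(\mathfrak X\times\mathfrak Y,(a\boxtimes b)\otimes\phi\boxtimes\psi)=\chi(\mathfrak X,a\otimes\phi)\,\chi(\mathfrak Y,b\otimes\psi)$. More generally it is defined through a Künneth-type pairing in Blanc's $K$-theory. In either description, $\boxtimes$ is bilinear, and it is associative and commutative up to the obvious reordering of factors.

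Second, rewrite the $i$-th factor $\epsilon^{\mu_0}_{\beta_i}$ using (\ref{eq:epsilonwc}). The hypotheses needed for this are exactly the standing assumptions of \cite{KM}: quasi-smoothness, proper good moduli spaces, HN filtrations, and the existence of a path of stability conditions from $\mu_i$ to $\mu_0$. These hold for the classes in $\mathfrak A$.

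Third, use bilinearity to move the finite sum over decompositions $\alpha_1+\ldots+\alpha_l=\beta_i$ and the scalar coefficients $\tilde U(\alpha_1,\ldots,\alpha_l;\mu_i,\mu_0)$ outside the exterior product. The coefficients are universal: they depend only on the classes and the two stabilities, not on the other factors. The result is exactly the right-hand side of Proposition~\ref{prods}.

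The one point needing care is that the sum is finite, or at least that it converges in the same sense after taking $\boxtimes$. In (\ref{eq:epsilonwc}) only finitely many decompositions contribute, because $\epsilon^{\mu_i}_{\alpha}$ vanishes unless $\mathbfcal M^{\mu_i}(\alpha)$ is nonempty. Boundedness of semistable objects along the path then gives finiteness, and finiteness is not affected by tensoring with fixed classes. I would also check that the evaluation functional on the product is compatible with $\boxtimes$ for \emph{all} classes $\tau$, not only for exterior products $a\boxtimes b$; Künneth fails in $K$-theory, as noted earlier in the paper. This is the expected main obstacle. I would resolve it by working directly with the definition of $\boxtimes$ at the level of $\mathbb K$ in \cite{KM}, which is a genuine bilinear operation on operational classes, rather than arguing through evaluations on product classes.
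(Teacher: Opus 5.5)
Your proposal is correct and takes the same route as the paper. The paper gives no separate proof: it notes that exterior products are defined on $\mathbb K$ and applies the single-factor formula (\ref{eq:epsilonwc}) in the $i$-th slot, using linearity of $\boxtimes$ there, which is what you do; your extra checks on finiteness and on defining $\boxtimes$ directly on $\mathbb K$ rather than via product classes are reasonable but go beyond what the paper spells out.
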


We can now formulate the $K$-theoretic analogue of  Theorem \ref{mainthm}. 

\begin{thm}\label{Kthmain}
Let $\mathbfcal M(\alpha)$ be a rigidified stack of sheaves in a given class $\alpha$ as above. Let $\epsilon_{\alpha}$ be the $\epsilon$-class corresponding to Gieseker stability. Then, for every descendant $\tau$ on $\mathbfcal M(\alpha)$, there exists a universal expression of the form

$$\chi(\mathbfcal M(\alpha),\tau \otimes  \epsilon_{\alpha}) = \sum_i V_{\alpha_{i,j}} \chi(\prod_j \mathbfcal M(\alpha_{i, j}), \psi_i \otimes (\boxtimes \epsilon_{\alpha_{i,j}})) $$ 

where $\psi_i$ are some descendants, $V_{\alpha_{i ,j}}$ are universal coefficients, $\alpha_{i, j}$ are of rank $0$, pure dimension $2$, and all $\epsilon$-classes on the RHS are taken in the sense of Gieseker stability as well.
\end{thm}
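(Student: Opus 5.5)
We follow the strategy of \cite{FT}, transported to the $K$-theoretic framework of Subsection~\ref{KM}, and argue by induction on the rank $r = \operatorname{rank}(\alpha)$. The claim is trivially true for rank $0$ classes of pure dimension $2$. First reduce to the case $\operatorname{ch}(\alpha) = \operatorname{ch}(\mathbf v) = (r, D, -\beta, -m)$ with $D.H^2 = 0$. Twisting by line bundles induces isomorphisms of moduli stacks and carries descendants to descendants. We postpone the general case, which proceeds by the argument announced for Subsection~\ref{sec: arbcl}, presumably by tensoring $\mathbf v$ with a suitable bundle or by twisting $b$. Fix $n \gg 0$ and set $v_n = \mathbf v - [\mathcal O(-n)]$.

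\textbf{Step 1: the wall-crossing for $v_n$.} Objects of class $v_n$ in $\mathcal A_b$ (with $b$ just below $-n$ or as in \cite{FT}) are studied along the path of weak stability conditions $\nu_{b,w}$ from large $w$ down to the boundary $w \to \frac12 b^2$. At the large-$w$ end, \cite{FT} shows that $\nu_{b,w}$-semistable objects of class $v_n$ are exactly pairs $\mathcal O(-n) \to E$ (more precisely, objects whose $\mathcal H^{-1}$ is $\mathcal O(-n)$) with $E$ Gieseker semistable of class $\mathbf v$. Alternatively, after a single wall, the stack is an explicit projective-bundle-type fibration over $\mathbfcal M(\mathbf v)$. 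Near the Brill--Noether boundary the moduli space of class $v_n$ becomes empty, or is controlled by the Bogomolov--Gieseker inequality of \cite{BMT}. Every wall in between destabilises $v_n$ into classes of rank strictly less than $r$ in absolute value, together with rank-$0$ pieces; this is exactly the bookkeeping of \cite[Sections 3--4]{FT}. We apply the wall-crossing formula (\ref{eq:epsilonwc}) at each wall, using Proposition~\ref{prods} so that we can cross walls inside products of $\epsilon$-classes. We pair both sides with a descendant and use the commutator-preserves-descendants proposition. The result expresses $\chi$ on the large-$w$ stack as a sum of products of descendant integrals over moduli of $\nu_{b,w}$-semistable objects of smaller rank. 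The inputs for this step are the hypotheses of \cite{KM}: Harder--Narasimhan filtrations, proper good moduli spaces (checked in Section 3), quasi-smoothness (Section 5), and connectivity by paths.

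\textbf{Step 2: from pairs back to Gieseker invariants, and from $\nu$-stable lower-rank objects to Gieseker ones.} At the large-$w$ end, we relate $\chi(\mathbfcal M^{\nu}(v_n), \tau\otimes\epsilon)$ to $\chi(\mathbfcal M(\mathbf v), \tau'\otimes\epsilon_{\mathbf v})$. This is a further simple wall-crossing against $\mathcal O(-n)[1]$, which produces a bracket $[\epsilon_{[\mathcal O(-n)[1]]}, \epsilon_{\mathbf v}]$. The point $\mathcal M(\mathcal O(-n)[1])$ is rigid, and the bracket can be computed directly. It equals the descendant integral over $\mathbfcal M(\mathbf v)$ twisted by a $K$-theoretic factor built from $R\operatorname{Hom}(\mathcal O(-n), \mathcal U)$, which for $n \gg 0$ is a vector bundle of rank $\chi(\mathbf v(n))$. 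We must then invert this relation: we need \emph{every} descendant $\tau$ on $\mathbfcal M(\mathbf v)$ to arise as $\tau'$ up to lower-order terms. For lower-rank classes appearing on walls, the $\nu_{b,w}$-semistable objects with $w$ near the relevant range are (shifts of) Gieseker semistable sheaves or are handled by the same large-volume comparison, in which case the induction hypothesis applies.

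\textbf{Main obstacle.} The expected hard step is the inversion in Step 2. In \cite{FT} one divides by a number such as $\chi(\mathbf v(n))$. Here the pair-to-sheaf correspondence multiplies the integrand by a nontrivial $K$-class (an Euler class of a bundle, or a $\lambda_{-1}$-type class), so a given $\tau$ need not appear directly. We would try to show that the map $\tau \mapsto \tau'$ on the space of descendants is surjective, by varying $n$ and using the insertions $\upsilon_j$ in (\ref{descform}) to generate a triangular system. The descendant combinatorics needed for this is exactly what the vertex algebra formalism of \cite{Joy} (the Appendix) should supply: the relevant operators have "leading term" an isomorphism, and a map from a larger space of descendants onto a smaller one turns out to be surjective. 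Once surjectivity is established, induction on rank closes the argument.
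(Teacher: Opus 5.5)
Your skeleton matches the paper's: induction on rank, wall-crossing for $v_n=\mathbf v-[\mathcal O(-n)]$ in the $(b,w)$-plane, the Joyce--Song wall contributing $\pm\chi(\mathbfcal M(\mathbf v),\tau'\otimes\epsilon_{\mathbf v})$ with $\tau'$ defined through $[\epsilon_{\mathbf v},\epsilon_{[\mathcal O(-n)[1]]}]$, and the need to show that every descendant on $\mathbfcal M(\mathbf v)$ is some $\tau'$. However, your chamber structure is wrong.

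\begin{itemize}
\item By the Claim in Subsection~\ref{sec: FT1}, at large volume the semistable objects of class $v_n$ are rank $r-1$ tilt-semistable sheaves, not pairs.
\item Objects with $\mathcal H^{-1}=\mathcal O(-n)$ are $F\oplus\mathcal O(-n)[1]$. They exist only on the interior wall $l_{JS}$.
\item Identifying a $v_n$-stack with a stack of pairs over $\mathbfcal M(\mathbf v)$ is Proposition~\ref{cokernel}, which holds only in rank $\le 2$.
\end{itemize}

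So your claim that every wall in Step~1 produces only pieces of rank $<r$ fails exactly at $l_{JS}$. The correct bookkeeping is the three-term identity (\ref{rk2FT}). There, $\mathbfcal M^-(v_n)$ is wall-crossed down to the empty chamber. Meanwhile $\mathbfcal M^+(v_n)$ must be wall-crossed \emph{up} to large volume. It is then taken to Gieseker stability in rank $r-1$, using that tilt-stability dominates $-K_X/2$-Gieseker stability (Proposition~\ref{Rek0}). At that point the induction hypothesis is applied to $v_n$ itself. The same tilt-to-Gieseker step is needed for $\mathbf v$ and for every lower-rank factor; your phrase ``$E$ Gieseker semistable at the large-$w$ end'' conflates the two stabilities. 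This can be repaired from the Feyzbakhsh--Thomas analysis, but as written Steps 1 and 2 do not fit together.

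The genuine gap is the inversion of $\tau\mapsto\tau'$. You correctly diagnose that, unlike the Calabi--Yau case, one cannot just divide by a number. But what you offer (varying $n$, a triangular system in the insertions $\upsilon_j$, an unspecified ``leading term'') is not an argument, and it is not how the difficulty is resolved. The paper's route is as follows.

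\begin{itemize}
\item It first passes to cohomology via Theorem~\ref{mainredcoh}. This is possible because at this stage all moduli are moduli of sheaves, so Joyce's classes exist.
\item In cohomology, descendants form the dual of Joyce's vertex algebra. By Gross and Voineagu this is the lattice vertex algebra $\mathbf V(K(X))$ of the symmetrised Euler form $Q$.
\item Under this identification, $\tau\mapsto\tau'$ is the transpose of $[\overline u,-]$ with $u=e^{\alpha}\otimes 1$ and $\alpha=[\mathcal O(-n)[1]]$. One therefore needs \emph{injectivity} of this bracket in the relevant degrees.
\item Since $X$ is Fano, $Q(\alpha,\alpha)=2\chi(\mathcal O_X)=2$. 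Hence $u_k$ acts on $e^{\mathbf v}\otimes F_L$ as $W_{-(k+1+Q(\alpha,\mathbf v))}\otimes\mathrm{id}$, where $W_m$ are the vertex operators of the lattice $(\mathbb Z,2)$ acting on symmetric functions.
\item The map $W_m\colon\Lambda_d\to\Lambda_{d+m}$ is injective for $m\ge d$, because $t_\lambda=W_{\lambda_1}\cdots W_{\lambda_\ell}(1)$ is unitriangular against $\{h_\mu\}$ (Cai--Jing). The bound $m\ge d$ holds for $n\gg0$ by asymptotic Riemann--Roch.
\item Passing from $\mathbf V$ to the Lie algebra $\mathbf V/\operatorname{im}(T)$ uses a $\gamma$-normalised lift. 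This requires a class $\gamma$ with $Q(\alpha,\gamma)=0\neq Q(\mathbf v,\gamma)$, which is constructed by explicit linear algebra.
\end{itemize}

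Without some argument of this kind, the rank-$r$ term cannot be isolated for an arbitrary descendant, and the induction does not close.
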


Our strategy is to prove it first, and then to deduce the cohomological statement as in Section \ref{cohtrans}.

\subsection{Pairs}\label{KMpairs}
  Let $\mathcal A$ be an abelian category, and let $\alpha \in \mathfrak A$ be a topological type as above.

  Here, for simplicity, we suppose that $\mathcal A$ is the category of coherent sheaves on $X$, and $\mu$ is either Gieseker stability, or tilt-stability (see Subsection \ref{sec: stabcond}).

   Recall that Joyce (see~\cite[Section 8.1]{Joy}) considers an abelian category $\acute{\mathcal A}$ of pairs \begin{equation}\label{pair}\rho: V \otimes_{\mathbb C} L \to E\end{equation} where $E$ is a coherent sheaf on $X$.

Here, $V$ is a finite-dimensional vector space, and $L$ is a fixed line bundle.

By the ``topological type'' of an object (\ref{pair}) from $\acute{\mathcal A}$, we will mean the pair $(\alpha, d)$ where $\alpha \in K(X)$ is the $K$-theory class of $E$, and $d$ is $\operatorname{dim}(V)$.

Joyce introduces the following weak stability condition $\mu^s$ on $\acute{\mathcal A}$:

\[
\mu^s(\alpha, d) =
\begin{cases}
(\mu(\alpha), d/\operatorname{rk} \alpha), & \operatorname{rk} \alpha \neq 0, \\
(\infty, 1), & \operatorname{rk} \alpha = 0.
\end{cases}
\]
(The two factors are ordered lexicographically, and $\mu$ stands for $\mu$-slope.)

Let $\underline{\mathbfcal P}_n(\alpha, d)$ denote the (derived) stack of $\mu^s$-semistable pairs of  class $(\alpha, d)$ with $L = \mathcal O(-n)$. We write $\underline{\mathbfcal P}_n(\alpha)$ for $d=1.$ The characteristic feature of this stack is that for $\mu^{s}$, all semistable objects are stable; in particular, in the sense explained above, $\epsilon_{\mathbfcal P_n(\alpha)} = \mathcal O_{\mathbfcal P_n(\alpha)}.$

In~\cite{KM}, as a byproduct of the aforementioned general results, we also obtain the following: 

\begin{proposition}\label{pairs} For any descendant $\tau$ and for sufficiently large $n$, there exists an expression of the form
$$\chi(\mathbfcal M(\alpha), \tau \otimes \epsilon(\alpha)) = \chi(\mathbfcal P_n(\alpha), \tau' \otimes \epsilon_{\mathbfcal P_n(\alpha)})  + \mathcal U$$
where $\mathcal U$ is a universal expression in terms of descendant integrals over the products of moduli spaces in $\mathcal A$, each factor of which has strictly lower rank than $\alpha$. The symbol $\tau'$ here is some $K$-theoretic descendant which can be formally determined by $\tau.$
\end{proposition}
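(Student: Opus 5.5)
The plan is to follow Joyce's pair-invariant argument (\cite[Section 8]{Joy}), with the homological wall-crossing replaced by the $K$-theoretic one of Proposition~2 and Proposition~\ref{prods}. We work in the abelian category $\acute{\mathcal A}$ of pairs (\ref{pair}) with $L = \mathcal O(-n)$, and regard $\mathcal A$ as the full subcategory of pairs with $V = 0$, i.e.\ classes $(\beta, 0)$. Fix $\alpha$ of positive rank. By the choice of $n$ we arrange, via the usual Castelnuovo--Mumford boundedness of Gieseker semistable sheaves of class $\alpha$ and of their possible subquotients, two things: every $\mu$-semistable $E$ of class $\alpha$ satisfies $\operatorname{Ext}^{>0}(\mathcal O(-n), E) = 0$, and $H^0(E(n))$ has dimension $\chi(\alpha(n))$, independent of $E$. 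We then introduce a one-parameter family of weak stability conditions $\mu^s_t$ on $\acute{\mathcal A}$ and join $\mu^s$ to a stability $\mu^0$ for which the objects of class $(\alpha,1)$ are exactly the pairs $V\otimes L \to E$ with $V=0$ component split off, that is, $\mu^0$ is chosen so that a $\mu^0$-semistable object of class $(\alpha,1)$ is the direct sum of $E$ and $(0, \mathbb C)$. For this path we assume the hypotheses of \cite{KM} (quasi-smoothness, proper good moduli spaces, HN filtrations); for sheaves on the Fano $X$ these are the same inputs already used to state Proposition~2.

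The computation then goes as follows. First, apply Proposition~2 to $\epsilon^{\mu^s}(\alpha,1) = \mathcal O_{\mathbfcal P_n(\alpha)}$, crossing from $\mu^0$ to $\mu^s$. The terms in $\epsilon^{\mu^s}(\alpha,1)$ are iterated brackets $[[\ldots[\epsilon(\alpha_1,d_1),\ldots],\epsilon(\alpha_k,d_k)]$ with $\sum d_i = 1$. Hence exactly one factor is $\epsilon(0,1) = [\mathrm{pt}]$ and the remaining factors are $\epsilon^{\mu}(\alpha_i)$ with $\sum \alpha_i = \alpha$. Among these, the term with the single class $\alpha$ is $U\cdot[\epsilon(0,1), \epsilon^{\mu}(\alpha)]$. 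Pairing with $\tau'$ and using Proposition~3, this becomes $\chi(\mathbfcal M(\alpha), \tau''\otimes\epsilon(\alpha))$. Here $\tau''$ arises from the bracket with the point class: it is $\tau$ multiplied by a $K$-theoretic Euler class of $\pi_*\mathcal U(n)$ (a vector bundle of rank $\chi(\alpha(n))$ by the vanishing above), or, symmetrically, divided by it. The remaining terms have at least two factors $\alpha_i$ with $\operatorname{rk}\alpha_i \ge 0$ summing to $\operatorname{rk}\alpha$. Semistability along the path forces equal slopes, so we split them as follows: terms in which every $\alpha_i$ has strictly smaller rank go into $\mathcal U$, and the terms containing a rank-$\operatorname{rk}\alpha$ piece together with rank-$0$ pieces must be excluded.

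The hard part is twofold: (i) showing that $\tau$ is reached, i.e.\ solving $\tau' \mapsto \tau''$ backwards so that $\tau'' = \tau$; and (ii) ruling out the terms with rank-$0$ companions. For (ii), we would choose the lexicographic weak stability so that rank-$0$ classes have slope $+\infty$ in the first component, which makes them destabilizing, so they cannot appear on the same wall as $\alpha$; this is precisely the reason for the $(\infty,1)$ convention in $\mu^s$. For (i), the bracket with $[\mathrm{pt}]$ acts on descendants essentially as cap product with $\chi(\mathcal O(-n),\mathcal U)$ together with an operator coming from the $B\mathbb G_m$ weight. Our first attempt is the following. Replace $\tau$ by $\tau\cdot\lambda_{-1}(\ldots)^{-1}$, which is invertible after localizing in a weight variable. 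Alternatively, use the Joyce-style trick of pairing with $\tau\cdot(\text{weight-}1\text{ descendant})$, so that the rigidification pushforward of Subsection~\ref{virrig}, property 4), picks out exactly $\tau$. Either way we obtain a descendant $\tau'$ on $\mathbfcal P_n(\alpha)$ with $\chi(\mathbfcal M(\alpha),\tau\otimes\epsilon(\alpha)) = \chi(\mathbfcal P_n(\alpha),\tau')$ minus lower-rank terms. Finally, we check that all descendants produced by Proposition~3 are again descendants in the sense of (\ref{descform}), which follows from the closure of that class under $\operatorname{Hom}$ of universal complexes and pushforward.
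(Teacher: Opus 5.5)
The paper does not prove this statement itself; it imports it from the Karpov--Moreira wall-crossing paper. Your overall route is the natural one: Joyce's pair argument run through the $K$-theoretic wall-crossing. It produces a single-class term $\pm[\epsilon(0,1),\epsilon(\alpha)]$, and all other terms have strictly smaller rank, because the $\alpha_i$ must have the same slope as $\alpha$ and hence positive rank; this also disposes of your concern (ii).

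But the step you label (i) is a genuine gap, and it is the actual content of the proposition. You must show that \emph{every} descendant $\tau$ on $\mathbfcal M(\alpha)$ arises as $\tau''$, and you only offer tentative mechanisms, neither of which works as stated. Dividing by a $\lambda_{-1}$-class after localizing in a weight variable does not produce a $K$-theoretic descendant in the sense of (\ref{descform}). Indeed, $\lambda_{-1}$ of a positive-rank bundle has rank $0$, so it is not invertible in unlocalized $K$-theory. The ``weight-one descendant'' alternative is not made precise. The heuristic behind both is also off: Euler or Segre classes of $\pi_*\mathcal U(n)$ appear in the \emph{cohomological} version, where one inserts $c_1(\mathcal O(1))^{N-1}$ on the projective bundle, not in the $K$-theoretic bracket with the point class. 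Surjectivity of such maps is not automatic: the analogous map at the Joyce--Song wall needs the whole Appendix of this paper.

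The missing observation is that in $K$-theory no inversion is needed. Take $\tau'=\pi^*\tau$, where $\pi$ is the forgetful map $(V\otimes\mathcal O(-n)\to E)\mapsto E$; this is a weight-zero descendant.
\begin{itemize}
\item For $n\gg0$ one has $\operatorname{Ext}^{>0}(\mathcal O(-n),E)=0$ on the semistable locus. So the stack of all pairs of class $(\alpha,1)$ with $E$ semistable is the vector-bundle stack $[W/\mathbb G_m]$ over $\mathbfcal M(\alpha)$, with $W=\pi_*\mathcal U(n)$ of rank $N=\chi(\alpha(n))$ and no relative obstructions.
\item Consequently $\chi([W/\mathbb G_m],\pi^*\tau)=\chi(\mathbfcal M(\alpha),\tau\otimes(\operatorname{Sym}^\bullet W^\vee)_0)=\chi(\mathbfcal M(\alpha),\tau)$.
\item Apply Theorem~\ref{nonabl} to the $\mu^s$-stratification of this stack. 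The stratum $\{\rho=0\}$, with center $\mathbfcal M(0,1)\times\mathbfcal M(\alpha)$, contributes nothing against $\pi^*\tau$. Its complex (\ref{complcorr}) is $\operatorname{Sym}^\bullet(W)\otimes\det W[-N]$, whose weights have absolute value at least $N$.
\item All remaining strata have centers $\mathbfcal P_n(\alpha_1)\times\prod_j\mathbfcal M(\alpha_j)$ with $\operatorname{rk}\alpha_1<\operatorname{rk}\alpha$. Finally, $\chi(\mathbfcal M(\alpha),\tau)$ differs from $\chi(\mathbfcal M(\alpha),\tau\otimes\epsilon(\alpha))$ by lower-rank products.
\end{itemize}
In the stable case this is just $R\pi_*\mathcal O_{\mathbb P(W)}=\mathcal O$. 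In your bracket language it says $\tau''=\pm\tau$ for $\tau'=\pi^*\tau$.

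A secondary point concerns your choice of $\mu^0$, which cannot exist as described. If $E\oplus(0,\mathbb C)$ is $\mu^0$-semistable, then $(E,0)$ and $(0,\mathbb C)$ have equal slope. Every pair is then an extension of $(0,\mathbb C)$ by $(E,0)$ and is semistable too. For that dominating stability the mixed classes $\epsilon^{\mu^0}(\beta,1)$ with $\beta\neq0$ need not vanish. So your step ``$\sum d_i=1$, hence exactly one factor is $\epsilon(0,1)$'' needs one of two fixes. Either start from a stability placing $(0,1)$ strictly below all sheaf classes, so that no object of class $(\beta,1)$ with $\beta\neq0$ is semistable. Or compute those mixed classes separately.
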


\subsection{The results of Halpern-Leistner}\label{HLRes}

This subsection is devoted to results of Halpern-Leistner and his coauthors which underlie the machinery of \cite{KM}. 

We include it to make the exposition more self-cobtainedd. We list some precise references below; a general reference is ~\cite{HLstruc}.

\subsubsection{$\Theta$-stratifications}\label{Thetastr}

Let $\mathcal M$ be an algebraic stack over $\mathbb C$ which is locally of finite type, quasi-separated, and has separated inertia with affine relative automorphism  groups. We will mostly be interested in the case where $\mathcal M$ is (a non-rigidified) moduli stack of objects in some abelian category.

Now we consider $\Theta:=\mathbb A^1/\mathbb G_m$, and the stacks:

i) $\operatorname{Grad}(\mathcal M) := \operatorname{Maps}(B\mathbb G_m, \mathcal M)$ which can be interpreted as \textit{the stack of graded points of $\mathcal M$}: this should be thought of as the stack of graded objects in the same category;

ii) $\operatorname{Filt}(\mathcal M) := \operatorname{Maps}(\Theta, \mathcal M)$, that is, \textit{the stack of filtered points of $\mathcal M$}.

There is a canonical morphism of taking the associated graded, obtained by restriction along the inclusion $0/\mathbb G_m \to \Theta$, $$\operatorname{gr}: \operatorname{Filt}(\mathcal M) \to \operatorname{Grad}(\mathcal M).$$

Also, we will use the evaluation map $\operatorname{ev}_1: \operatorname{Filt}(\mathcal M) \to \mathcal M$ given by restricting the map to the open substack $\operatorname{pt} = (\mathbb A^1 \setminus \{0\})/\mathbb G_m$ inside $\Theta.$

\begin{definition} A \textit{$\Theta$-stratum} in $\mathcal M$ is a union of connected components $\mathfrak S \subset \operatorname{Filt}(\mathcal M)$ such that the morphism $\operatorname{ev}_1: \mathfrak S \to \mathcal M$ is a locally closed embedding.
\end{definition}

\begin{definition} A $\Theta$-stratification of $\mathcal M$ consists of 

1) a well-ordered set $\Gamma$ and a collection of open substacks $\mathcal M_{\leq \alpha}$ for $\alpha \in \Gamma:$ $\mathcal M_{\leq \alpha} \subseteq \mathcal M_{\leq \alpha^{'}}$ for $\alpha < \alpha^{'}$, and $\mathcal M = \bigcup \mathcal M_{\leq \alpha}$;

2) a $\Theta$-stratum in each $\mathcal M_{\leq \alpha}$, $\operatorname{ev}_1: \mathfrak S_{\alpha } \to \mathcal M_{\leq \alpha}$, such that $$\mathcal M_{\leq \alpha} \setminus \operatorname{ev}_1(\mathfrak S_\alpha) = \mathcal M_{<\alpha}:=\bigcup \limits_{\alpha^{'} < \alpha} \mathcal M_{\leq \alpha^{'}};$$

3) for every point $x \in |\mathcal M|$, the set $\{\alpha \in \Gamma \ | \ x \in \mathcal M_{\leq \alpha}\}$ is finite.
\end{definition}

We will also need a weaker notion of pseudo-$\Theta$-stratification, see Remark~\ref{pseudo}, which is defined in~\cite[Definition 3.34]{Joy}.

\begin{definition}
    A pseudo-$\Theta$-stratification consists of the same amount of data as a $\Theta$-stratification. However, now

    i) the set $\Gamma$ is only partially ordered;
    
    ii) though $\mathfrak S_{\alpha}$ is assumed to exist, no distinguished choice is made.
\end{definition}

\begin{remark}\label{pseudo} The important class of $\Theta$-strata for moduli stacks comes from substacks of objects having a prescribed Harder-Narasimhan type (with the filtered structure encoding the Harder-Narasimhan filtration). It is known that, for example, this defines a pseudo-$\Theta$-stratification when Harder-Narasimhan stratifications are taken for Gieseker stability, see Subsection~\ref{CaseGies} below for the references.

However, as far as we understand, it is still not known whether, in this particular example, the pseudo-$\Theta$-stratification can be canonically upgraded to $\Theta$-stratification. See ~\cite{GHR} for some partial results in this direction.
\end{remark}

We also assume that $\Gamma$ contains a minimal element $0$. We will refer to $\mathcal M_{\leq 0}$ as  the \textit{semistable locus} $\mathcal M^{ss}$, and to its complement as the \textit{unstable locus} $\mathcal M^{us}$.

Halpern-Leistner introduces a notion of \textit{the center of a stratum.}

\begin{definition} Let $\mathcal S$ be a (weak) $\Theta$-stratum (or pseudo-$\Theta$-stratum) corresponding to $\alpha \in \Gamma$. Let $\sigma$ be the natural morphism $$\sigma:\operatorname{Grad}(\mathcal M) \to \operatorname{Filt}(\mathcal M)$$ sending a graded object to the same object equipped with the canonical filtration associated to its grading.
Then, one defines the center of $\mathcal S$ as $\sigma^{-1}(\mathcal S)$. We will denote it by $\mathcal Z(\alpha)^{ss}$.
\end{definition}

Let $\mathbfcal M$ be a derived algebraic stack. 

Then, the mapping stack $\operatorname{Filt}(\mathbfcal M)$ inherits a derived structure, and, moreover, specifying a derived $\Theta$-stratum, whose definition is as above, is equivalent to specifying one for the underlying classical stack. The same holds for pseudo-$\Theta$-stratifications, and all other objects discussed above.

See the proof of ~\cite[Lemma 1.2.3]{HL} for more details.

\subsubsection{Non-Abelian Localization}\label{nonabloc}

Let $\mathbfcal M$ be a derived algebraic stack, which 

i) is of finite type, with affine diagonal over $\mathbb C$;

ii) has a pseudo-$\Theta$ stratification so that centers of strata have proper good moduli spaces;

iii) is quasi-smooth;

iv) has a proper good moduli space. 

Then, the expressions of the form $$\chi(\mathbfcal M, F), \ \chi(\mathbfcal M^{ss}, F), \ \chi(\mathbfcal Z(\alpha)^{ss}, F)$$ are defined for $F \in$ $\operatorname{Perf}(\mathbfcal M)$, $\operatorname{Perf}(\mathbfcal M^{ss})$, $\operatorname{Perf}(\mathbfcal Z(\alpha)^{ss})$, respectively.

Let us consider some stratum $\alpha$. Its center is a substack of the mapping stack
\[
\operatorname{Maps}(B\mathbb G_m, \mathcal M).
\]
In particular, as in~\cite[Section 5]{HLrem}, the pullback
\[
\mathbb L_{\mathbfcal M}|_{\mathbfcal Z(\alpha)^{ss}}
\]
has a natural weight decomposition with integer weights, the so-called weight-grading.

Let $\mathbb L(\alpha)^+$ and $\mathbb L(\alpha)^-$ stand for the positive and negative parts of this decomposition respectively. The following statement is known as the Non-Abelian Localization theorem of Halpern-Leistner (see~\cite{HLrem}).

\begin{theorem}\label{nonabl} Let $\mathbfcal Z(\alpha)^{ss}$ be the centers of strata as above. For each $\alpha$, let $E(\alpha)$ be the complex

\begin{equation}\label{complcorr}E(\alpha):=\operatorname{Sym}(\mathbb L(\alpha)^- \oplus (\mathbb L(\alpha)^+)^{\vee}) \otimes \operatorname{det}(\mathbb L(\alpha)^+)^{\vee}[-\operatorname{rk} \mathbb L(\alpha)^+].\end{equation}

Then, for any $F \in \operatorname{Perf}(\mathbfcal M)$ the formula

\begin{equation}\label{nonablform}\chi(\mathbfcal M, F) = \chi(\mathbfcal M^{ss}, F|_{\mathbfcal M^{ss}}) + \sum  \chi(\mathbfcal Z(\alpha)^{ss}, F|_{\mathbfcal Z(\alpha)^{ss}} \otimes E(\alpha))\end{equation}

holds.
\end{theorem}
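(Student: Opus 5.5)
The plan is to prove formula (\ref{nonablform}) by peeling off the strata one at a time and computing each correction term as local cohomology supported on a single stratum. Since $\mathbfcal M$ is of finite type, property 3) of the (pseudo-)$\Theta$-stratification means only finitely many strata $\mathfrak S_\alpha$ are nonempty. I would therefore choose a total order refining the partial order on $\Gamma$ and induct, proving at each step
\begin{equation*}
\chi(\mathbfcal M_{\leq \alpha}, F) = \chi(\mathbfcal M_{<\alpha}, F) + \chi(\mathbfcal Z(\alpha)^{ss}, F|_{\mathbfcal Z(\alpha)^{ss}}\otimes E(\alpha)).
\end{equation*}
Summing these identities gives (\ref{nonablform}). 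For the pseudo-stratification I would pick, for each $\alpha$, any of the $\Theta$-strata $\mathfrak S_\alpha$ that is assumed to exist; the right-hand side only involves the center, so the choice does not matter.

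For the inductive step, write $\mathcal S=\operatorname{ev}_1(\mathfrak S_\alpha)\subset \mathbfcal M_{\leq\alpha}$, a closed substack with open complement $\mathbfcal M_{<\alpha}$. The localization triangle $R\Gamma_{\mathcal S}F \to F \to Rj_*j^*F$ reduces the step to computing $\chi(R\Gamma_{\mathcal S}F)$. The input I would use is the local structure of a $\Theta$-stratum. First, $\mathfrak S_\alpha \to \mathbfcal Z(\alpha)^{ss}$, given by $\operatorname{gr}$, is a weight-contracting map, with relative cotangent complex the positive-weight part $\mathbb L(\alpha)^+$ restricted to the center; restriction along $\sigma$ identifies the relevant complexes. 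Second, the conormal complex of $\mathcal S\subset\mathbfcal M$, restricted to the center, is the negative-weight part $\mathbb L(\alpha)^-$; in the derived setting this makes sense because the embedding is quasi-smooth. Using these, I would compute $R\Gamma_{\mathcal S}F$ in two stages:
\begin{enumerate}
\item[(i)] Local cohomology along the normal directions gives the Koszul-dual expression $\operatorname{Sym}(\mathbb L(\alpha)^{+\vee})\otimes\det(\mathbb L(\alpha)^+)^\vee[-\operatorname{rk}\mathbb L(\alpha)^+]$. The twist and shift come from local cohomology of $\mathbb A^n$ at the origin, $\operatorname{Sym}(V^\vee)\otimes\det V^\vee[-n]$ in the weight-inverted form.
\item[(ii)] Pushing forward along the contraction $\mathcal S\to\mathbfcal Z(\alpha)^{ss}$ contributes $\operatorname{Sym}(\mathbb L(\alpha)^-)$, the functions on the fibres of the attracting locus.
\end{enumerate}
Together these produce exactly $F|_{\mathbfcal Z(\alpha)^{ss}}\otimes E(\alpha)$, which is (\ref{complcorr}).

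The main obstacle is making these infinite symmetric algebras meaningful, so that $\chi$ of each term is finite and all manipulations converge. I would decompose everything by weight of the $\mathbb G_m$ acting on the center. Then $E(\alpha)$ has only finitely many summands in each weight, and both $\operatorname{Sym}(\mathbb L^-)$ and $\operatorname{Sym}(\mathbb L^{+\vee})$ are concentrated in a half-line of weights. On the center, only the weight-zero part of $F\otimes E(\alpha)$ contributes to $\chi$ (property 4 of Subsection~\ref{virrig}, applied to the $\mathbb G_m$-gerbe structure). Hence only finitely many summands matter, and each has finite $\chi$ because the centers have proper good moduli spaces. The same weight argument is what shows that $R\Gamma_{\mathcal S}F$ has finite-dimensional $\chi$ in the first place, via Halpern-Leistner's cohomology-vanishing/``baric decomposition'' for $\Theta$-strata. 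Quasi-smoothness is what keeps $\mathbb L(\alpha)^\pm$ perfect with finite amplitude, so that $\operatorname{Sym}$ and $\det$ are defined; for these foundational points I would follow \cite{HLrem} directly rather than reprove them.
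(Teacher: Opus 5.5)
You are following the same route as the argument the paper relies on. The paper gives no proof of its own: it cites Halpern-Leistner, and \cite{KM} for the pseudo-$\Theta$ case. Your outline has the same three ingredients: induction over strata, local cohomology along each stratum, and finiteness from the weight grading on the centers. Two of your steps need repair, however.

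First, your identification of $\mathbb L(\alpha)^\pm$ is backwards. Take a center point $E=\bigoplus E_i$, and take the higher-weight $E_j$ ($j>i$) to be the subobjects, which is the convention that makes (\ref{Ea}) correct. Then the normal complex of $\mathcal S$ in $\mathbfcal M$ is $\bigoplus_{i<j}\operatorname{RHom}(E_j,E_i)[1]$, whose dual is $\mathbb L(\alpha)^+$. The relative cotangent complex of $\mathcal S\to\mathbfcal Z(\alpha)^{ss}$, i.e.\ the extension directions, is $\bigoplus_{i<j}\operatorname{RHom}(E_i,E_j)^\vee[-1]=\mathbb L(\alpha)^-$. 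Your stages (i) and (ii) silently use this correct assignment. The assignment you actually state would produce $\operatorname{Sym}(\mathbb L^+\oplus(\mathbb L^-)^\vee)\otimes\det(\mathbb L^-)^\vee[-\operatorname{rk}\mathbb L^-]$, which is not $E(\alpha)$.

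Second, and more seriously, $\mathcal S\hookrightarrow\mathbfcal M$ is not a quasi-smooth embedding in this setting, even though $\mathcal S$ and $\mathbfcal M$ are each quasi-smooth. The normal complex above carries $\operatorname{Ext}^2(E_j,E_i)\cong\operatorname{Ext}^1(E_i,E_j\otimes K_X)^*$ in degree $1$, and this is nonzero in general on a Fano threefold. Equivalently, $\mathbb L_{\mathcal S/\mathbfcal M}$ has amplitude $[-2,-1]$ rather than $[-1,-1]$, and $\operatorname{rk}\mathbb L(\alpha)^+$ is only a virtual rank. So the normal directions are not an affine space, and stage (i) cannot be read off from local cohomology of $\mathbb A^n$ at the origin.

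What is true is a $K$-theoretic statement. Filter the derived formal neighbourhood of $\mathcal S$ so that its graded pieces are $\operatorname{Sym}^k$ of the derived conormal complex $\mathbb L_{\mathcal S/\mathbfcal M}[-1]$. This can be done, for instance, via a derived deformation to the normal cone or a local Koszul presentation of $\mathbfcal M$ inside a smooth stack. Then dualize and pass to the weight-completed $K$-theory of the center. There $\operatorname{Sym}((\mathbb L^+)^\vee)\otimes\det(\mathbb L^+)^\vee[-\operatorname{rk}\mathbb L^+]$ makes sense as a class, with $\operatorname{Sym}$ of the degree-one (obstruction) part contributing exterior powers. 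This suffices, since the theorem is only a statement about Euler characteristics.

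This derived Koszul identity, not an affine-space computation, is the real content of the cited theorem. Since you defer the foundations to Halpern-Leistner, your outline can be completed, but the mechanism you describe for (i) is wrong as stated.

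A smaller point concerns the pseudo-$\Theta$ case. After you refine to a total order, $\mathfrak S_\alpha$ is only given as a $\Theta$-stratum of the original $\mathbfcal M_{\le\alpha}$. Your inductive step in the refined open set therefore needs excision for $R\Gamma_{\mathcal S}$.
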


\begin{remark}\label{rem: weights} In principle, the formula~\ref{complcorr} does not define a complex with coherent cohomology sheaves. However, we work on $\operatorname{Perf}(\mathbfcal Z(\alpha)^{ss})$, and this category acquires the weight-grading.

It is known (see Subsection~\ref{virrig}) that only weight $0$ complexes may have non-trivial Euler characteristics.

The weights of $E(\alpha)$ are bounded from above. In particular, for any $F$ with a bounded set of weights, the formula~\ref{nonablform} makes sense since the weight-pieces have coherent cohomology sheaves.
\end{remark}

\begin{remark} The reference for the proof is~\cite{HLnew}. The case of \textit{pseudo}-$\Theta$-stratified stacks is addressed in~\cite{KM}.
\end{remark}

\begin{remark} In the cases of interest to us, the centers of the strata will be products of lower rank moduli spaces. In these situations, $ \mathbb L(\alpha)^+$ and $\mathbb L(\alpha)^-$ will have a natural description in terms of the cotangent complexes of factors (see Proposition~\ref{domstabcon}).
\end{remark}

\subsubsection{The case of $\nu_{b, w}$}\label{BridStab}

The goal of the rest of this section is to explain what $\Theta$-stratifications look like on the moduli spaces of objects in the categories from Subsection~\ref{Kthsec},  and to  check the assumptions i), ii), iii), iv) from Subsection~\ref{nonabloc}.

Let $\mathcal A$ be the abelian category $\mathcal A_b$.

We choose a class $\alpha$ in $C(\mathcal A)$  with $\nu_{b, w}(\alpha) \neq 0$, and two weak stability conditions  $\tau = (b, w)$ and $\tau_0 = (b, w_0)$ so that $\tau_0$ \textit{weakly dominates} $\tau$  \textit{for class $\alpha$} in the following sense (see \cite[p.30]{FT}). \footnote{In particular, this holds when $\alpha$ is a torsion-free class in the sense of~\cite{HLstruc}, $\tau_0$ is a (weak) stability on some wall, and $\tau$ is a (weak) stability condition off this wall but sufficiently close to it.}

\begin{definition}\label{weakdom}
We will say that $\tau_0$ \textit{weakly dominates} $\tau$  \textit{for class $\alpha$} iff
$$\tau(\gamma) \leq \tau(\beta) \Rightarrow \tau_0(\gamma) \leq \tau_0(\beta)$$ when
each of the classes $\gamma$ and $\beta$ is either $\alpha$, or a class of a $\tau$-semistable HN-factor of some $\tau_0$-semistable objects of class $\alpha$ (compare with ~\cite[Definition 3.8]{Joy}).
\end{definition}

 In particular, in this case ${\underline{\mathcal M}}_{\tau}(\alpha)^{ss} \subseteq \underline{\mathcal M}_{\tau_0}(\alpha)^{ss}.$

Let us, then, consider the stacks ${\underline{\mathcal M}}_{\tau}^{ss}(\alpha)$  of the $\tau$-semistable complexes in the category $\mathcal A_b$ with some fixed topological type $\alpha \in K(X)$. Similarly, consider ${\underline{\mathcal M}}_{\tau_0}^{ss}(\alpha)$.

We assume that $\alpha$ is a torsion-free class in the sense of~\cite[Definition 6.4.9]{HLstruc}, that is, $\nu_{b, w}(\alpha) < +\infty$. Moreover, we assume that $\alpha$ is positive: either $\operatorname{ch}_0(\alpha) > 0,$ or $\operatorname{ch}_0(\alpha) = 0,$ and $\operatorname{ch}_1(\alpha).H^2 > 0.$

We now verify the assumptions i), ii),  iii) and iv) from Subsection~\ref{nonabloc} for ${\underline{\mathcal M}}_{\tau}(\alpha)$, ${\underline{\mathcal M}}_{\tau_0}(\alpha)$, and also check that these stacks have proper good moduli spaces. 

\textit{Assumption i).} The finiteness for this assumption follows from~\cite[Theorem C.5]{FT}. The diagonal is affine by ~\cite[Proposition 6.2.7]{HLstruc}. This proposition, however, requires the property of \textit{generic flatness}; this property was checked in ~\cite[Proposition C.4]{FT}.

\textit{Assumption ii).} For the assumption ii), we claim   that ${\underline{\mathcal M}}_{\tau_0}(\alpha)^{ss}$ has a $\Theta$-stratification corresponding to the Harder-Narasimhan filtrations of $\tau_0$-semistable objects by the $\tau$-semistable ones.

The strata correspond to the filtered objects which come as  Harder-Narasimhan filtrations. However, the order on the set of these strata, and the \textit{weights} \footnote{Note that the data of the map $\Theta \to {\underline{\mathcal M}}$ remembers not only the filtration but also the $\mathbb Z$-grading on the associated grading; the degrees in question are known as the corresponding \textit{weights}.} associated to the terms of the filtration, are much more sophisticated. The existence of the compatible choices so that all of the axioms are satisfied follows from the results of~\cite[Theorem 6.5.3]{HLstruc} (see also~\cite[Theorem 7.27, Theorem 7.29]{AHLH}). 

\begin{remark} In fact, one should be careful here: the references above state Theorems for  Bridgeland stability conditions, and $\nu_{b, w}$ is not a Bridgeland stability condition.

However, it is still given by a central charge. This central charge is not in \cite{FT}; however, we discuss it in the formula (\ref{centcharge}) below.

In particular, the argument of \cite[Theorem 6.5.3]{HLstruc}, ~\cite[Theorem 7.27, Theorem 7.29]{AHLH}, applies. To see this,  one has to check three properties to use it: a) algebraicity, that is, the fact that every stability condition is equivalent to one with $Z_{b, w}(\alpha) \in \mathbb Q + i\mathbb Q$, b) generic flatness, c) the boundedness of quotients.

For a), we may work under the desired assumption of $Z_{b, w}(\alpha) \in \mathbb Q + i\mathbb Q$ since the walls are defined over $\mathbb Q$ by the explicit form (\ref{centcharge}) of the central charge.

b) was proven in~\cite[Proposition C.4]{FT}.

c) follows from~\cite[Lemma 6.5.5]{HLstruc} and ~\cite[Theorem C.5]{FT}.
\end{remark}

As in~\cite[Lemma 1.2.3]{HL}, the existence of the derived $\Theta$-stratification is now immediate from the existence of the classical one and of the derived structure on ${\underline{\mathcal M}}_{\tau_0}^{ss}(\alpha)$ (see~\cite{TV}). (The corresponding derived stacks are defined as open substacks of the natural derived enhancement of the Toën-Vaquié stack for $D^b(X)$, see~\cite{TV}.)

We are done with the condition ii) from Subsection~\ref{nonabloc}: the existence of proper good moduli spaces for centers of strata follows from the rest of the present section and Proposition \ref{centergp}.

\textit{Assumption iii).} The condition iii), that is, the quasi-smoothness of ${\underline{\mathcal M}}_{\tau}(\alpha)^{ss}$ and ${\underline{\mathcal M}}_{\tau_0}(\alpha)^{ss}$, will be proved for positive classes (see Theorem~\ref{Extv}) in Subsection~\ref{MozgSec}: it follows from Theorem~\ref{Extv} as in~\cite[Definition 7.45]{Joy} since $h^2(\iota^{cl*}\mathbb L_{{\underline{\mathcal M}}}^{\vee})|_{[E]} = \operatorname{Ext}^3(E, E)=0.$

\textit{Assumption iv).} Finally, let us see that the stacks of the form ${\underline{\mathcal M}}_{\tau}^{ss}(\alpha)$ for $\tau \in U$ admit proper good moduli spaces. This is essentially contained in the work of Alper-Halpern-Leistner-Heinloth but we will go along the lines of similar arguments from ~\cite{Joy} instead.

First of all, $\tau$ is additive in the sense of~\cite[Subsection 3.3.5]{Joy} (see~\cite[7.27]{AHLH}). Moreover, ${\underline{\mathcal M}}_{\tau}^{ss}(\alpha)$ are bounded by~\cite[Theorem C.5]{FT}, and, hence, all of the strata are bounded as well by Proposition~\ref{centergp} below, and by ~\cite[Lemma 3.16]{ToK3}.

By ~\cite[Theorem 3.43]{Joy}, it follows that, to check the existence  of the proper good moduli space for ${\underline{\mathcal M}}_{\tau}(\alpha)$, one has only to show that the category $\mathcal A_b$ is of compact type.

To see this, we realize $\mathcal A_b$ as a heart $D^b(X)^{\heart}$ of some $t$-structure on $D^b(X)$ (\cite[Lemma 6.1]{Bridge}, see also ~\cite[p. 1]{FT}). Now,~\cite[Section 6.2]{HLstruc} shows that $(D(X)_{qc})^{\heart} = \operatorname{Ind}(\mathcal A_b)$ \footnote{This equality holds since, by ~\cite[Section 6.2]{HLstruc}, this category is compactly generated and has all filtered colimits.} is locally Noetherian and has $\mathcal A_b$ as the subcategory of its compact objects, and we are done.

\subsubsection{The case of the Gieseker stability}\label{CaseGies}

We will also apply the nonabelian localization theorem to the pseudo-$\Theta$-stratification induced by Gieseker stability.

Fix $\alpha \in K(X)$ of $\operatorname{rank} > 0$ or of $\operatorname{rank} = 0$ and pure dimension $2$.

 Suppose  that $\tau_{Gies}$ is a Gieseker stability condition, and $\tau$ is a stability condition of the form $\nu_{b, w}$ so that all $\tau$-semistable objects of class $\alpha$ are sheaves, and $\tau$ weakly dominates $\tau_{Gies}$ in the sense of Definition \ref{weakdom}.

First of all, the assumption i) holds in this situation by \cite[Proposition 7.23, Example 3.36]{Joy}. 

The assumption ii) takes the following form: it says that there exists a pseudo-$\Theta$ stratification of
\[
{\underline{\mathcal M}}_{\tau}(\alpha)^{ss}
\]
corresponding to the Harder-Narasimhan filtrations of $\nu_{b, w}$-semistable sheaves by Gieseker semistable sheaves, similarly to the situation from Subsection~\ref{BridStab}. This follows from~\cite[Example 3.36]{Joy}.

The assumption iii) was already checked.

The assumption iv) follows from~\cite[Subsection 7.3.4]{Joy}.

\subsubsection{The centers}\label{Sec: centers} 
We claim that when $\tau$, $\tau_0$ are sufficiently close $(b, w)$-conditions so that $\tau_0$ weakly dominates $\tau$ for some torsion-free class $\alpha$ (in the sense of Subsection~\ref{BridStab}) the centers of Harder-Narasimhan strata of ${\underline{\mathcal M}}_{\tau_0}^{ss}(\alpha)$ with respect to $\tau$ have a natural description in terms of ${\underline{\mathcal M}}_{\tau}^{ss}(\beta)$ for various classes $\beta$.

Indeed, the following abstract statement was proved in~\cite{KM}.

\begin{proposition}\label{domstabcon}
Suppose that $\mathcal A$ is an abelian category with two weak stability conditions, say, $\tau_0$ and $\tau$. Suppose that, for some class $\alpha$, $\tau_0$ weakly dominates $\tau$ in the sense of Subsection~\ref{BridStab}.

Suppose also that $\mathcal A$ satisfies the conditions of \cite[Section 2]{KM}. In particular, $\tau_0$-semistable objects in some class $\alpha$ admit $\tau$-Harder-Narasimhan filtrations, and this gives a $\Theta$-stratification of ${\underline{\mathcal M}}_{\tau}^{ss}(\alpha)$ (for some choice of the order, and some choice of the weights, as in Subsection~\ref{BridStab}). Then, the following holds.

 Consider the $\Theta$-stratification of ${\underline{\mathcal M}}^{ss}_{\tau_0}(\alpha)$ induced by Harder-Narasimhan decompositions for the stability condition $\tau$. Then, the centers of strata are of the form
\begin{equation}\label{desccorr} {\underline{\mathcal M}}^{ss}_{\tau}(\beta_1) \times \ldots \times {\underline{\mathcal M}}^{ss}_{\tau}(\beta_n),\end{equation}
and for each set $\{\beta_i\}$, $\sum \beta_i = \alpha$, and $\tau_0(\beta_i) = \tau_0(\beta_j)$ for all $i, \ j$.  The corresponding equality holds for the natural derived enhancements in the sense of~\cite{TV} as well (whenever they exist).
\end{proposition}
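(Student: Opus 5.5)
Since the statement is attributed to \cite{KM} and the paper only recalls it, I would reconstruct the argument from the general structure of Harder--Narasimhan $\Theta$-strata as in \cite{HLstruc}.

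\emph{Step 1: identify the stratum.} Fix a point of the stratum $\mathfrak S \subset \operatorname{Filt}({\underline{\mathcal M}}^{ss}_{\tau_0}(\alpha))$. It is a $\tau_0$-semistable object $E$ of class $\alpha$ together with its $\tau$-HN filtration
$$0 = E_0 \subset E_1 \subset \cdots \subset E_n = E,$$
with factors $F_i = E_i/E_{i-1}$ of classes $\beta_i$. The factors are $\tau$-semistable with strictly decreasing $\tau$-slopes. The weights are fixed by the chosen numerical data of the stratum, as in Subsection~\ref{BridStab}.

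\emph{Step 2: the factors have equal $\tau_0$-slope.} Each $E_1$ is a subobject of $E$ and each $F_n$ is a quotient of $E$. Weak dominance (Definition~\ref{weakdom}) turns the $\tau$-inequalities $\tau(\beta_1) > \tau(\alpha) > \tau(\beta_n)$ into the non-strict $\tau_0$-inequalities
$$\tau_0(\beta_1) \ge \tau_0(\alpha) \ge \tau_0(\beta_n).$$
Semistability of $E$ for $\tau_0$ gives the reverse inequalities. Iterating over all the $E_i$ forces $\tau_0(\beta_i)=\tau_0(\alpha)$ for every $i$.

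\emph{Step 3: compute the center.} By definition the center is $\sigma^{-1}(\mathfrak S)$, the locus of graded points whose canonical filtration lies in $\mathfrak S$. A graded point is an object $\bigoplus_i F_i$ with $F_i$ in weight $w_i$. By uniqueness of HN filtrations, its canonical filtration is the HN filtration exactly when each $F_i$ is $\tau$-semistable of class $\beta_i$. Conversely, any tuple of $\tau$-semistable objects $F_i$ of classes $\beta_i$ (with the prescribed decreasing slopes) gives such a point. Step~2 is needed here: $\bigoplus F_i$ is an extension of $\tau_0$-semistable objects of equal $\tau_0$-slope, hence itself $\tau_0$-semistable, so it lies in ${\underline{\mathcal M}}^{ss}_{\tau_0}(\alpha)$.

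Since $\operatorname{Grad}$ of a moduli stack of objects is the stack of $\mathbb Z$-graded objects, and the weights are fixed, this identifies the center with the open substack
$$\prod_i {\underline{\mathcal M}}^{ss}_{\tau}(\beta_i) \subset \prod_i {\underline{\mathcal M}}(\beta_i).$$

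\emph{Step 4: derived enhancement.} For the derived statement, $\operatorname{Grad}$ of the derived Toën--Vaquié stack is again the derived stack of graded objects, by the mapping-stack description $\operatorname{Maps}(B\mathbb G_m,-)$. Open substacks of a derived stack are determined by their classical truncations. Hence the identification lifts, as in \cite[Lemma 1.2.3]{HL}.

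\emph{Main obstacle.} I expect the hardest step to be the weak-stability subtleties in Step~2. For weak stability conditions, the relevant classes may include objects with $\nu=+\infty$. Weak dominance is only imposed on the specific classes of HN factors, so the argument must use only those classes. I also need to check the edge cases carefully: possible equalities of slopes, and that "extension of equal-slope semistables is semistable" holds for weak stability conditions.
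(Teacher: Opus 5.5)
Your route is the natural one: identify graded points of the stack of objects, use uniqueness of HN filtrations, compare $\tau_0$-slopes via weak dominance, and lift to derived stacks through open substacks. The paper gives no argument of its own here and only cites \cite{KM}. There is, however, a genuine gap in the converse inclusion of Step~3.

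For an \emph{arbitrary} tuple of $\tau$-semistable objects $F_i$ of classes $\beta_i$ to give a point of the center, $\bigoplus_i F_i$ must lie in ${\underline{\mathcal M}}^{ss}_{\tau_0}(\alpha)$. Given Step~2, this amounts to every $\tau$-semistable object of class $\beta_i$ being $\tau_0$-semistable, i.e.\ ${\underline{\mathcal M}}^{ss}_{\tau}(\beta_i)\subseteq{\underline{\mathcal M}}^{ss}_{\tau_0}(\beta_i)$. You assert this without proof, and neither of your inputs gives it:
\begin{itemize}
\item Step~2 is a statement about the slopes of the classes $\beta_i$, not about objects.
\item Weak dominance (Definition~\ref{weakdom}) only constrains comparisons between $\alpha$ and classes of $\tau$-HN factors of $\tau_0$-semistable objects of class $\alpha$. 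A $\tau_0$-destabilizing subobject $G\subset F_i$ of an arbitrary $\tau$-semistable $F_i$ produces classes $[G]$ and $\beta_i-[G]$ outside that list.
\end{itemize}
What your argument does prove is weaker. The HN factors of a \emph{given} $\tau_0$-semistable $E$ are $\tau_0$-semistable: subobjects of $E_1$ are subobjects of $E$, and $E/E_1$ is again $\tau_0$-semistable of the same $\tau_0$-slope, so you can induct. This only shows that the center is the open substack of $\prod_i{\underline{\mathcal M}}^{ss}_{\tau}(\beta_i)$ on which $\bigoplus_i F_i$ is $\tau_0$-semistable, not the whole product.

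To close the gap you need an additional input. Any one of the following would do:
\begin{itemize}
\item Joyce's full dominance, i.e.\ $\tau(\gamma)\le\tau(\delta)\Rightarrow\tau_0(\gamma)\le\tau_0(\delta)$ for \emph{all} classes. This gives $\tau$-semistable $\Rightarrow$ $\tau_0$-semistable at once.
\item The situation the paper actually uses (footnote in Subsection~\ref{BridStab}): $\tau_0$ on a wall and $\tau$ sufficiently close to it. There, closedness of semistability in the stability parameter, together with local finiteness of walls for the finitely many classes $\beta_i$, yields the inclusion.
\item Taking the inclusion as part of the hypotheses of \cite[Section 2]{KM}.
\end{itemize}
The paper's own remark that weak dominance gives ${\underline{\mathcal M}}_{\tau}(\alpha)^{ss}\subseteq{\underline{\mathcal M}}_{\tau_0}(\alpha)^{ss}$ tacitly relies on the same input.

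A smaller point concerns Step~2: ``iterating over the $E_i$'' does not work as written, since weak dominance is not imposed on the classes $[E_i]$. Instead, compare each $\beta_i$ directly with $\beta_1$ and $\beta_n$, which the definition does allow. From $\tau(\beta_n)\le\tau(\beta_i)\le\tau(\beta_1)$ you get $\tau_0(\beta_n)\le\tau_0(\beta_i)\le\tau_0(\beta_1)$, and both ends equal $\tau_0(\alpha)$.
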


The  corollary is

\begin{proposition}\label{centergp} The centers of the strata from Subsection \ref{BridStab} admit proper good moduli spaces.
\end{proposition}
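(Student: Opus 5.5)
By Proposition~\ref{domstabcon}, each center of a Harder--Narasimhan stratum of ${\underline{\mathcal M}}^{ss}_{\tau_0}(\alpha)$ with respect to $\tau$ is a product
\[
{\underline{\mathcal M}}^{ss}_{\tau}(\beta_1) \times \ldots \times {\underline{\mathcal M}}^{ss}_{\tau}(\beta_n),
\]
with $\sum \beta_i = \alpha$ and $\tau_0(\beta_i)=\tau_0(\beta_j)$ for all $i,j$. The plan is to reduce the statement to the existence of proper good moduli spaces for the individual factors ${\underline{\mathcal M}}^{ss}_{\tau}(\beta_i)$. We then use that a finite product of stacks with proper good moduli spaces has the product of those spaces as a proper good moduli space. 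This holds because good moduli space morphisms are stable under flat base change. Over $\mathbb C$, the product $\mathcal X_1\times\mathcal X_2 \to M_1\times M_2$ factors as $\mathcal X_1\times \mathcal X_2\to M_1\times\mathcal X_2\to M_1\times M_2$. Each step is a base change of a good moduli space map along a flat morphism from a field product, so both steps are good moduli space morphisms. Finally, products of proper algebraic spaces are proper.

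So the main work is to show that each ${\underline{\mathcal M}}^{ss}_{\tau}(\beta_i)$ has a proper good moduli space, by rerunning the argument of Assumption iv) from Subsection~\ref{BridStab} for $\beta_i$ instead of $\alpha$. This requires three inputs, taken in order. First, $\beta_i$ must be a torsion-free, positive class. Since $\tau_0(\beta_i)=\tau_0(\alpha)<+\infty$, each $\beta_i$ has $\nu_{b,w_0}(\beta_i)<+\infty$, so $\operatorname{ch}_1^{bH}(\beta_i).H^2>0$. The case $\operatorname{ch}_1^{bH}(\beta_i).H^2 = 0$ is excluded because such factors would have infinite slope, contradicting equality of slopes. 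Second, boundedness of ${\underline{\mathcal M}}^{ss}_{\tau}(\beta_i)$ follows from \cite[Theorem C.5]{FT}, since the $\beta_i$ range over a finite set by \cite[Lemma 3.16]{ToK3}. Third, $\tau$ is additive and $\mathcal A_b$ is of compact type, as shown above. Together with \cite[Theorem 3.43]{Joy}, these three inputs give the proper good moduli space for each factor.

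The step I expect to be the main obstacle is the identification in Proposition~\ref{domstabcon} at the level needed for good moduli spaces. The center is defined as $\sigma^{-1}(\mathcal S)$ inside $\operatorname{Grad}$, and one must check that this is isomorphic as a stack to the product, and not merely bijective on points. I would argue via the description $\operatorname{Grad}({\underline{\mathcal M}})\simeq\coprod\prod{\underline{\mathcal M}}(\beta_i)$ for moduli of objects in an abelian category, on the components with fixed weights. Under this description the condition that $\sigma$ lands in the HN stratum becomes openness of $\tau$-semistability in each factor. One further caveat concerns the non-rigidified product, which carries a $B\mathbb G_m^n$ action rather than $B\mathbb G_m$. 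This does not affect the existence of good moduli spaces, since these pass through gerbe quotients by tori, which are themselves good moduli space morphisms.
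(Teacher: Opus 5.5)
Your proposal is correct and follows the paper's route: the paper obtains Proposition~\ref{centergp} as a direct corollary of Proposition~\ref{domstabcon}, which identifies the centers as products of the stacks ${\underline{\mathcal M}}^{ss}_{\tau}(\beta_i)$. It then combines this with the Assumption iv) argument for each factor and with the fact that finite products preserve proper good moduli spaces. One small correction: you only establish finite $\nu_{b,w}$-slope for the $\beta_i$, and that is all the good moduli space argument needs; positivity in the sense of Definition~\ref{poscl} does not follow from it and can fail (e.g.\ for factors $\mathcal O(-n)[1]$ at the Joyce--Song wall), but it plays no role here.
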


Let us return to the Non-Abelian Localization Theorem. As we have seen in Subsection~\ref{BridStab}, it applies to the $\Theta$-stratification induced by Harder-Narasimhan structure with respect to $\tau$ on ${\underline{\mathcal M}}^{ss}_{\tau_0}(\alpha)$. But one has to decipher the definition of $E(\alpha)$.

First of all, as we noted above, the cotangent complex $\mathbb L_{\underline{\mathbfcal M}_{\tau_0}(\alpha)}$ has the fibers

$${\mathbb L_{\underline{\mathbfcal M}}}_{\tau_0}(\alpha)|_{[E]} = \operatorname{RHom}_X(E, E)^{\vee}[-1].$$

Now, suppose that $E \in \mathbfcal Z(\alpha)^{ss},$ $E = \bigoplus E_i,$ $E_i \in \underline{\mathbfcal M}_{\tau}(\beta_i)^{ss}$ as above. With respect to the weight grading, each $E_i$ sits in its own degree $w_i$ , $w_1 < w_2 < \ldots < w_n$ (the Harder-Narasimhan condition). Now, 

$$\operatorname{RHom}_X(E, E)^{\vee}[-1] = \bigoplus \operatorname{RHom}_X(E_i, E_j)^{\vee}[-1],$$ and the summand $\operatorname{RHom}_X(E_i, E_j)^{\vee}$ has the weight $w_i - w_j$.

In particular, we get the following formula for $E(\alpha)$:

\begin{align}\label{Ea} E(\alpha) =  \operatorname{Sym}^{\bullet}\left(\bigoplus\limits_{i < j} \operatorname{RHom}(E_i, E_j)^{\vee}[-1] \oplus \operatorname{RHom}(E_j, E_i)[1]\right) \otimes \mathcal L_E[\eta_E]\end{align}
where $\mathcal L_E$ is the line bundle $\operatorname{det}(\bigoplus\limits_{i < j} \operatorname{RHom}(E_j, E_i)^{\vee}[-1])^{\vee}$, and $\eta_E$ is the number $$-\operatorname{rank} (\bigoplus\limits_{i < j} \operatorname{RHom}(E_j, E_i)^{\vee}[-1])^{\vee}.$$

\textit{This formula is the main ingredient of the results recalled in Section \ref{KM}.}

\section{The results of Feyzbakhsh-Thomas}
\subsection{The summary of~\cite{FT}.}\label{summ} We give here a brief overview of some aspects of the proof from \cite{FT}. As we already mentioned, our strategy is similar. 

The authors of \cite{FT} work under different assumptions on $X$, assuming that it is Calabi-Yau.  However, their goal is the same, that is, it is to express the enumerative invariant $J(\mathcal M(\alpha))$, which is a Behrend-weighted Euler characteristic, using the lower rank moduli spaces. This gives the $\operatorname{rank} r \to \operatorname{rank} 0$-type result of the same flavour as  above.

One of the main technical ingredients of their proof is  Joyce's motivic Hall algebra (see~\cite{Jo2}). The use of this technique is as follows. For a wall $l$ in the space $\mathbb R^2$ of the stability parameters, and for  parameters $(b, w_+)$ and $(b, w_-)$ just above and below $l$, it makes it possible to express the invariant $J_{b, w_+}(\alpha)$ through $J_{b, w_-}(\alpha)$ and $J_{b, w_-}(\alpha_i)$. Here $\alpha_i$ are the topological types of the destabilizing objects on the wall. The coefficients occurring in the resulting Joyce's formula are universal.

More precisely, the authors of~\cite{FT} proceed as follows. First of all, it turns out that the semistable and destabilizing objects they work with are still \textit{mostly} sheaves, though this statement requires a non-trivial analysis of \cite{FT}. 

In fact, essentially, complexes occur in the analysis of \cite{FT} only at one particular wall, the so-called Joyce-Song wall $l_{JS}$: here objects of the form $F \oplus \mathcal L[-1]$ may arise for a \textit{sheaf} $F$ and a line bundle $\mathcal L$.

In particular, outside $l_{JS}$, destabilizing subobjects usually have the rank strictly lower than the ones that they destabilize. The Hall algebra machinery, together with induction on rank, helps to reduce the $\operatorname{rank} r \to \operatorname{rank} < r$ problem to the case of $J_{b, \infty}(\alpha)$. This stability is known as \textit{large-volume stability} and turns out to be equivalent to tilt-stability, which can then be wall-crossed to Gieseker stability, and hence to \textit{any sufficiently general} $J_{b, w}(\alpha)$.

Thus, it suffices to express Behrend Euler characteristics $J_{b, w}(\alpha)$ with $\operatorname{rk} \alpha = r \in \mathbb N$, in terms of similar invariants calculated for \textit{some} $(b, w)$ via the similar invariants of lower rank. For this, one chooses $(b, w_+)$ right above $l_{JS}$ and $(b, w_-)$ right below $l_{JS}$, and does the $l_{JS}$-wall-crossing $J_{b, w_+}(\beta) \to J_{b, w_-}(\beta)$ for a suitably defined class $\beta$ of rank $r-1$.

It turns out that the resulting wall-crossing formula for $J_{b, w_+}(\beta) - J_{b, w_-}(\beta)$ contains only one summand which is not an expression in terms of invariants corresponding to $\operatorname{rank} < r$ classes. This summand is a non-zero multiple of $J_{b, \infty}(\alpha)$ (which corresponds to the aforementioned objects with factors $F$ and $\mathcal L[-1]$).

Hence, the described procedure gives a way to express $J_{b, \infty}(\alpha)$ through the lower rank enumerative invariants. 

In the rest of the present section, we overview some technical details of the procedure which will be useful for us.

\subsection{Technical points}\label{FTsec}

\subsubsection{Semistable objects are sheaves}\label{sec: FT1}

In \cite{FT}, Feyzbakhsh and Thomas prove the following description of the semistable/destabilizing objects inside $\mathcal A_b$.

 Let $\mathbf v$ be a class in $K(X)$ of some rank $r$, and let $v_n$ be $v - [\mathcal O(-n)]$ for $n$ large enough, so that $(v, n)$ satisfies equalities and inequalities from \cite[p. 7]{FT}.

 The following is one of the main results of \cite{FT}.

\textbf{Claim.} Let $U$ be the subset of $\mathbb R^2$ consisting of points $(b, w)$ with  $w > \frac{1}{2}b^2$. Given $(b, w) \in U$ with $b < \frac{n}{r-1}$, any $\nu_{b, w}$-semistable $E \in \mathcal A_b$ of class $v_n$ is either a sheaf, or of the form $F \oplus \mathcal O(-n)[1]$. Here, $F$ is a sheaf of class $\mathbf{v}$ which is semistable in the large volume chamber.
The latter  possibility happens only on the wall $l_{JS} \subseteq \mathbb R^2$ known as \textit{the Joyce-Song wall}. This is the line joining $\Pi(v_n)$ and $\Pi(\mathcal O(-n)[1])$ for $\Pi$ being the projection $$K_H(X) \setminus \{E \ | \ \operatorname{ch}_0(E) = 0\} \to \mathbb R^2,$$ $$E \mapsto (\frac{\operatorname{ch}_1(E).H^2}{\operatorname{ch}_0(E).H^3}, \frac{\operatorname{ch}_2(E).H}{\operatorname{ch}_0(E).H^3}).$$ (This line will play the major role in Section~\ref{stablewc}.)

\subsubsection{Destabilizing objects are sheaves}\label{JS}

\begin{definition}\label{poscl}
As in \cite{FT}, we will call a class $v \in K_H(X)$ \textit{positive} if, for $\operatorname{ch}(v) = (v_0, v_1, v_2, v_3)$, one has either:

a) $v_0 > 0,$ or

b) $v_0 = 0$ and $v_1.H^2 > 0$.

\end{definition}
Let us fix a positive class $v \in K_H(X).$

Now, we introduce the \textit{safe area} $U_v$ for $v$ in the following fashion. Set
\[
\Delta_H(E):=(\operatorname{ch}_1(E).H^2)^2
- 2(\operatorname{ch}_2(E).H)(\operatorname{ch}_0(E).H^3).
\]
We consider four possible cases.

\begin{enumerate}

\item If $\Delta_H(v) < 0$, we set $U_v = U$;

\item if $\Delta_H(v) = 0$, we set $U_v = \{(b, w) \in \mathbb R^2 \ | b < \mu_H(\mathbf v)\}$;

\item if $\Delta_H(v) > 0$, we draw a line $l$ which misses $U$. Suppose also that $\operatorname{ch}_0(v) > 0,$ and let $l$ be any line through $\Pi(v)$ that misses $U$. Rotate $l$ clockwise until it intersects the boundary of $U$ in two points: say, $p_1$ and $p_2$. Continuing to move $l$, we finally find the unique line $l = l_v$ so that $b_{p_1} < b_{p_2}$ satisfy the inequality

$$H^3.(b_{p_1} - b_{p_2}) = \operatorname{ch}_1.H^2 - b_{p_2}\operatorname{ch}_0.H^3.$$

The \textit{safe area} $U_v$, in this case consists of points that lie strictly above $l_v$ and strictly to the left of $\Pi(v)$.

\item  if $\Delta_H(v) > 0,$ but $\operatorname{ch}_0(v) = 0$, do the same as in (3) , but let $l$ be any line of the slope $\frac{\operatorname{ch}_2.H}{\operatorname{ch}_1.H^2}$, and move it upwards instead of rotating.
\end{enumerate}

The notion of safe areas will be important for us because of the following statement.

\begin{proposition}\label{FT32} (\cite[Proposition 3.2]{FT}) Suppose that $E \in \mathcal A_b$ is a $\nu_{b, w}$-semistable object of some class $v$ so that $(b, w) \in U_v$. Then,

a) $E$ is a sheaf;

b) for each short exact sequence $0 \to E_1 \to E \to E_2 \to 0$ with $$\nu_{b, w}(E_1) = \nu_{b, w}(E) = \nu_{b, w}(E_2),$$ $E_1$ and $E_2$ are also sheaves. Moreover, $(b, w) \in U_{v_i}$ where $v_i$ is $[E_i]$ for each $i$.
\end{proposition}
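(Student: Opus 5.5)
This proposition is a restatement of \cite[Proposition 3.2]{FT}. Its proof only concerns objects of $\mathcal A_b$ and the numerics of $\nu_{b,w}$, so it does not use the Calabi--Yau hypothesis. The plan is to check that every ingredient of the argument in \cite{FT} uses only the Bogomolov--Gieseker type inequality on $X$ and the structure of $\mathcal A_b$, and then run the argument as follows.

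\emph{Part a).} Let $E\in\mathcal A_b$ be $\nu_{b,w}$-semistable of class $v$ with $(b,w)\in U_v$. There is a short exact sequence in $\mathcal A_b$
\[
0\to \mathcal H^{-1}(E)[1]\to E\to \mathcal H^0(E)\to 0 ,
\]
where $\mathcal H^{-1}(E)$ is torsion free with $\mu_H^+\le b$. Suppose $\mathcal H^{-1}(E)\neq 0$. The subobject $\mathcal H^{-1}(E)[1]$ has negative rank and $\operatorname{ch}_1^{bH}.H^2\ge 0$. The aim is to show that its $\nu_{b,w}$-slope exceeds $\nu_{b,w}(E)$, which contradicts semistability.
\begin{itemize}
\item First use the HN filtration of $\mathcal H^{-1}(E)$ together with the Bogomolov inequality $\Delta_H\ge 0$ for $\mu_H$-semistable sheaves. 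This places $\Pi$ of each factor on or below the parabola $w=\tfrac12 b^2$, strictly to the left of $b$.
\item Semistability of $E$ forces $\Pi(\mathcal H^{-1}(E))$ to lie on the side of the line through $(b,w)$ and $\Pi(v)$ that makes the slope at most $\nu_{b,w}(E)$.
\item In each of the four cases defining $U_v$ these two constraints are incompatible. In case (1), $\Delta_H(v)<0$, the point $\Pi(v)$ lies above the parabola. In case (2), $\Delta_H(v)=0$, we use $b<\mu_H(v)$. In cases (3) and (4), $\Delta_H(v)>0$, the line $l_v$ is chosen precisely so that every line through $(b,w)\in U_v$ and $\Pi(v)$ meets the region below the parabola only to the right of $b$.
\end{itemize}
The key computation is therefore the planar geometry: the defining equality for $b_{p_1},b_{p_2}$ encodes that the sub-object would need $\operatorname{ch}_1^{bH}.H^2$ larger than that of $E$, which is impossible in the quotient sequence. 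A uniform way to do this is to bound $\operatorname{ch}_1^{bH}.H^2$ of the factors and compare with $\operatorname{ch}_1^{bH}(v).H^2$.

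\emph{Part b).} Given $0\to E_1\to E\to E_2\to 0$ with equal slopes, both $E_i$ are $\nu_{b,w}$-semistable and $\operatorname{ch}_1^{bH}(E_i).H^2\le \operatorname{ch}_1^{bH}(E).H^2$. Moreover, $\Pi(E_1)$, $\Pi(E_2)$ and $\Pi(E)$ all lie on the single line through $(b,w)$ with slope $\nu_{b,w}(E)$. The main step is to show $(b,w)\in U_{v_i}$: this is a purely geometric statement that the safe areas are monotone along such a line under the constraint $0\le \operatorname{ch}_1^{bH}(v_i).H^2\le \operatorname{ch}_1^{bH}(v).H^2$. Once that holds, part a) applied to each $E_i$ shows both are sheaves.

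The main obstacle is part b) in case (3), where a destabilizing class may have $\Delta_H(v_i)>0$. There I would use the explicit construction: rotating or translating the line to $l_{v_i}$ stays below the line through $\Pi(v)$, by the bound on $\operatorname{ch}_1^{bH}$.
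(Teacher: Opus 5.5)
\noindent The paper does not reprove this; it imports \cite[Proposition 3.2]{FT}. You correctly observe that the argument uses only the BMT inequality $\Delta_H\ge 0$ for $\nu_{b,w}$-semistable objects, which holds on any threefold, and not the Calabi--Yau condition; that is exactly what the import needs. The gap is in the mechanism you propose for a). The two constraints you list are compatible. The line $\ell$ through $(b,w)$ and $\Pi(v)$ leaves $U$ at $p_1$ and runs strictly below the parabola for all $b'<b_{p_1}$. So take a $\mu_H$-stable sheaf $F$ with $\Pi(F)$ in the sliver between $\ell$ and the parabola to the left of $p_1$: it satisfies Bogomolov, has $\mu_H(F)<b$, and has $\nu_{b,w}(F[1])<\nu_{b,w}(E)$. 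Hence $\mathcal H^{-1}(E)[1]$ need not destabilise $E$ at all, and the contradiction must also use $\mathcal H^0(E)$. Your description of $l_v$ is also false: no line through an interior point of $U$ meets the region below the parabola only to the right of $b$. Finally, a $\operatorname{ch}_1^{bH}$ budget at the point $b$ does not rescue the argument. The only bounds available from $E\in\mathcal A_b$ are $\mu_H^+(\mathcal H^{-1}(E))\le b<\mu_H^-(\mathcal H^0(E))$, and these allow $\operatorname{ch}_1^{bH}(\mathcal H^{-1}(E)[1]).H^2$ to be arbitrarily small.

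\noindent The missing idea is to sharpen those bounds to the endpoints of the chord:
\[
\mu_H^+(\mathcal H^{-1}(E))\le b_{p_1},\qquad \mu_H^-\bigl(\mathcal H^0(E)/\operatorname{tors}\bigr)\ge b_{p_2}.
\]
The argument behind \cite{FT} gets this because $E$ stays semistable along all of $\ell\cap U$ (walls for $v$ are lines through $\Pi(v)$), so $E\in\mathcal A_{b'}$ for every $b'\in(b_{p_1},b_{p_2})$. At the single point $(b,w)$ you can get the same bounds by testing $\nu_{b,w}$ against two objects, both of which satisfy Bogomolov: $F_1[1]\subset E$, where $F_1$ is the maximal destabilising subsheaf of $\mathcal H^{-1}(E)$, and the last $\mu_H$-HN quotient of $\mathcal H^0(E)/\operatorname{tors}$. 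Twisting at $b_{p_2}$ rather than $b$ then gives
\[
\operatorname{ch}_1^{b_{p_2}H}(v).H^2=\operatorname{ch}_1^{b_{p_2}H}(\mathcal H^0(E)).H^2-\operatorname{ch}_1^{b_{p_2}H}(\mathcal H^{-1}(E)).H^2\ \ge\ \operatorname{rk}\mathcal H^{-1}(E)\,(b_{p_2}-b_{p_1})H^3 .
\]
Lying strictly above $l_v$ means precisely $\operatorname{ch}_1^{b_{p_2}H}(v).H^2<(b_{p_2}-b_{p_1})H^3$; the defining relation of $l_v$ in the text has its sign garbled. Case (1) is vacuous by BMT, and in case (2) $b_{p_2}=\mu_H(v)$.

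\noindent Part b) needs the same twist. Your constraint $0\le \operatorname{ch}_1^{bH}(v_i).H^2\le\operatorname{ch}_1^{bH}(v).H^2$ does not give $(b,w)\in U_{v_i}$. For example, a rank $-1$ class with $\Pi(v_i)=p_1$ satisfies it whenever $(b-b_{p_1})H^3\le\operatorname{ch}_1^{bH}(v).H^2$, yet it is not even positive. Instead argue as follows. Since $\Delta_H(v_i)\ge 0$, the point $\Pi(v_i)\in\ell$ lies outside the open chord. Together with $\operatorname{ch}_1^{bH}(v_i)>0$, this forces $\operatorname{ch}_1^{b_{p_2}H}(v_i).H^2\ge 0$, and even $\ge(b_{p_2}-b_{p_1})H^3$ if $\operatorname{rk}v_i<0$. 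Additivity then yields $0\le\operatorname{ch}_1^{b_{p_2}H}(v_i).H^2<(b_{p_2}-b_{p_1})H^3$. This excludes negative rank and gives $\mu_H(v_i)\ge b_{p_2}>b$. Since $\ell$ is also the relevant line for $v_i$, it puts $(b,w)$ in $U_{v_i}$, after which a) applies.
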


As in ~\cite[Section 5]{FT}, let us fix a stability condition $\tau_0$ on the Joyce-Song wall $l_{JS}$, and a stability condition $\tau$ directly above or below it. Then, for any Harder-Narasimhan filtration of any $\tau_0$-semistable object $K$ of the class $v_n = \mathbf{v} - [\mathcal O(-n)]$ by  $\tau$-semistable ones of classes $\alpha_i$, the following holds 

\begin{proposition}\label{FTvn} The set of Harder-Narasimhan factors of $K$ either consists either of a sheaf of class $\mathbf v$ and $\mathcal O(-n)[1]$, or consists  only of  the sheaves of  rank $\leq \operatorname{rk}\mathbf v -1$ in their safe areas, objects of  classes $w_n$ of $\operatorname{rk}(w_n) \leq \operatorname{rk} \mathbf v-2$, and a multiple of $\mathcal O(-n)[1]$.
\end{proposition}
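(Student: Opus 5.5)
The plan is to combine the description of the wall from the Claim in Subsection~\ref{sec: FT1} with Proposition~\ref{domstabcon} and Proposition~\ref{FT32}, and to finish with rank bookkeeping.

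\emph{Step 1: all factors lie on $l_{JS}$.} Let $K$ be $\tau_0$-semistable of class $v_n$, with $\tau$-Harder--Narasimhan factors $E_1,\dots,E_k$ of classes $\alpha_i$. Weak dominance gives, via Proposition~\ref{domstabcon}, that every $E_i$ is $\tau_0$-semistable with $\tau_0(\alpha_i)=\tau_0(v_n)$. Hence each $\Pi(\alpha_i)$, or the relevant slope direction when $\operatorname{ch}_0(\alpha_i)=0$, lies on the Joyce--Song wall $l_{JS}$. Also $\sum\alpha_i=v_n$, and $\operatorname{rk}v_n=r-1$.

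\emph{Step 2: classify each factor through its cohomology sheaves.} Write $E_i\in\mathcal A_b$ as an extension of $\mathcal H^0(E_i)$ by $\mathcal H^{-1}(E_i)[1]$. Then $\mathcal H^{-1}(E_i)$ is torsion free with $\mu_H^+\le b$, and $\mathcal H^0(E_i)$ has $\mu_H^->b$.
\begin{itemize}
\item[(a)] If $\mathcal H^{-1}(E_i)=0$, then $E_i$ is a sheaf. Its point lies on $l_{JS}$, which is above the boundary line $l_{\alpha_i}$, and $b<n/(r-1)$. I will check that $(b,w)\in U_{\alpha_i}$ using the explicit construction of safe areas. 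This is the same computation as in \cite[Section 5]{FT}, and it uses the largeness of $n$. Proposition~\ref{FT32} then applies.
\item[(b)] If $\mathcal H^{-1}(E_i)\neq0$, I will show that $\mathcal H^{-1}(E_i)\cong\mathcal O(-n)^{\oplus m}$ for some $m\ge1$. The first tool is the Bogomolov--Gieseker inequality for the Harder--Narasimhan factors of $\mathcal H^{-1}(E_i)$. The second is that $l_{JS}$ passes through $\Pi(\mathcal O(-n)[1])$, which is the leftmost admissible point with $\Delta_H=0$. The third is the $\nu_{b,w}$-semistability of $E_i$. Together these should force any sub-object $\mathcal H^{-1}(E_i)[1]\hookrightarrow E_i$ with a different Chern character to destabilise $E_i$, or to violate the BMT inequality near the point $(-n,n^2/2)$.
\end{itemize}
Consequently each non-sheaf factor either equals $\mathcal O(-n)^{\oplus m}[1]$, or has class $w_n=w-m[\mathcal O(-n)]$ with $w=[\mathcal H^0(E_i)]$.

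\emph{Step 3: rank bookkeeping.} Sheaf factors have $\operatorname{rk}\ge0$, and each copy of $\mathcal O(-n)[1]$ contributes rank $-1$. Suppose the factors include a sheaf $F$ of rank $\ge r$. Since the ranks sum to $r-1$ and there is at least one shifted summand, the only possibility compatible with $\tau_0$-semistability is exactly two factors, $F$ of class $\mathbf v$ and $\mathcal O(-n)[1]$. This matches the second possibility in the Claim of Subsection~\ref{sec: FT1}. Otherwise every sheaf factor has rank $\le r-1$. A mixed factor of class $w_n$ has $\mathcal H^0$ of rank at most $r-1$, and it carries at least one copy of $\mathcal O(-n)$ in $\mathcal H^{-1}$. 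I will use this to get $\operatorname{rk}w_n\le r-2$. The remaining pure factors are multiples of $\mathcal O(-n)[1]$.

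\emph{Main obstacle.} I expect the hardest step to be Step~2(b), namely proving that $\mathcal H^{-1}(E_i)$ is exactly a sum of copies of $\mathcal O(-n)$ and not some other slope-$\le b$ sheaf. I would first try to transcribe the argument of \cite[Section 5]{FT}, which uses only BMT and the geometry of $l_{JS}$ and is therefore insensitive to the Calabi--Yau assumption. The substitute input here is that $X$ satisfies the generalized Bogomolov--Gieseker inequality and that $n=n(p_1,p_2,q,\mathbf v)$ is chosen large.
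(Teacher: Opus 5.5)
The paper does not prove this statement itself; it quotes it from \cite[Section 5]{FT}. Your reconstruction has a genuine gap in Step 2. Splitting the factors by whether $\mathcal H^{-1}(E_i)$ vanishes is not the dichotomy of the statement, and claim 2(a) is false.

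Suppose a factor has class $\alpha_i=u_n=u-[\mathcal O(-n)]$ with $\Pi(u)\in l_{JS}$, and there is a $\nu_{b,w}$-semistable sheaf $G$ of class $u$. Then $G\oplus\mathcal O(-n)[1]$ is a $\nu_{b,w}$-semistable non-sheaf of class $\alpha_i$ at your point of $l_{JS}$. By Proposition~\ref{FT32}(a), that point is therefore \emph{not} in $U_{\alpha_i}$: $l_{JS}$ is also the Joyce--Song wall of $u_n$.

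Such factors are typically sheaves. Take semistable $F_1,F_2$ with $[F_1]+[F_2]=\mathbf v$ and $\Pi(F_1)=\Pi(F_2)=\Pi(\mathbf v)$, an injection $\mathcal O(-n)\hookrightarrow F_1$, and set $K=(F_1/\mathcal O(-n))\oplus F_2$. Just above $l_{JS}$, the factors of $K$ are $F_2$ and the Joyce--Song cokernel $F_1/\mathcal O(-n)$. The latter is a sheaf of class $[F_1]_n$ lying outside its safe area; this follows directly from Proposition~\ref{FT32}(b) applied to $0\to F_1\to F_1/\mathcal O(-n)\to\mathcal O(-n)[1]\to 0$.

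So ``I will check $(b,w)\in U_{\alpha_i}$'' fails exactly for the factors the statement calls $u_n$, and 2(b) targets the wrong objects. Step 2(b) is also only asserted (``should force''). Moreover, allowing $m\geq 2$ produces classes $u-m[\mathcal O(-n)]$, not the classes $u_n$ with $n$ large relative to $u$ that the induction in Subsection~\ref{JSw} needs.

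What has to replace Step 2 is a numerical argument. Each factor is $\tau_0$-semistable with $\Pi(\alpha_i)\in l_{JS}$. The Bogomolov--Gieseker inequality for $\nu_{b,w}$-semistable objects then puts $\Pi(\alpha_i)$ on $l_{JS}$ but outside $U$. That means its $b$-coordinate is $\leq -n$ or $\geq b_2$, where $-n$ and $b_2=O(1/n)$ are the $b$-coordinates of $l_{JS}\cap\partial U$. Combine this with two further inputs:
\begin{enumerate}
\item positivity and additivity of $\operatorname{ch}_1^{bH}.H^2$ over the factors (the total is $(n-(r-1)b)H^3$, and each $\mathcal O(-n)[1]$ uses $(n+b)H^3$);
\item the width condition defining $l_{\alpha_i}$.
\end{enumerate}
Together these decide, factor by factor, whether $(b,w)\in U_{\alpha_i}$ or $\alpha_i=u-[\mathcal O(-n)]$ with $\Pi(u)\in l_{JS}$.

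The same numerics are missing from your Step 3. ``Compatible with $\tau_0$-semistability'' does not by itself exclude, for example:
\begin{enumerate}
\item a rank-$(r+1)$ sheaf with two copies of $\mathcal O(-n)[1]$ --- its $\Pi$ would be a rank-weighted average of $\Pi(\mathbf v)$ and $\Pi(\mathcal O(-n))$, hence inside $U$;
\item a rank-$r$ sheaf together with rank-$0$ factors and one $\mathcal O(-n)[1]$ --- its slope would be $\leq -1/(rH^3)<b_2$ for $n\gg 0$.
\end{enumerate}
Finally, nothing in Step 3 bounds the rank of $\mathcal H^0$ of a non-sheaf factor, which you need for $\operatorname{rk}(u_n)\leq\operatorname{rk}\mathbf v-2$.
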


\section{The Mozgovoy-type statement}\label{MozgSec}

\subsection{$\operatorname{Ext}$-vanishing}

Before we start the actual proof, it remains to do the last preliminary: to prove the quasi-smoothness result announced in Subsection~\ref{BridStab}.

Our argument will be completely parallel to the one in \cite[Theorem 7.7]{Mozg}, which, however, works with del Pezzo surfaces, not with Fano threefolds.

The main theorem is as follows.

\begin{thm}\label{Extv} Suppose that an object $F \in \mathcal A_b$ is $\nu_{b, s}$-semistable for $s > \frac{b^2}{2}$, and positive in the sense of Definition \ref{poscl}.
Then, 

a) \begin{equation}
\operatorname{Ext}^{-1}(F, F) = 0
\end{equation}

b) \begin{equation}
\operatorname{Ext}^3(F, F) = 0.
\end{equation}
\end{thm}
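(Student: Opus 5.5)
Part a) is formal, and Serre duality reduces b) to a Hom-vanishing that I will prove by comparing tilt-slopes.

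For a), the category $\mathcal A_b$ is the heart of a bounded $t$-structure on $D^b(X)$ (\cite[Lemma 6.1]{Bridge}). Negative Ext groups between objects of a heart vanish, so $\operatorname{Ext}^{-1}(F,F)=\operatorname{Hom}(F,F[-1])=0$. No stability hypothesis is needed here.

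For b), Serre duality on the $3$-fold $X$ gives
\[
\operatorname{Ext}^3(F,F)\simeq \operatorname{Hom}(F,F\otimes K_X)^{\vee},
\]
so it suffices to show $\operatorname{Hom}(F,F\otimes K_X)=0$. Write $K_X=-kH$ with $k>0$, which is possible because $X$ is Fano and $K_X$ is a multiple of $H$. I will then argue in three steps, following \cite[Theorem 7.7]{Mozg}.

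\textbf{Step 1.} Tensoring by $\mathcal O(-kH)$ induces an equivalence $\mathcal A_b\simeq \mathcal A_{b-k}$. It carries $\nu_{b,s}$-semistable objects to $\nu_{b-k,s'}$-semistable objects, where $s'$ is determined by the action of $e^{-kH}$ on $\operatorname{ch}$. Hence $G:=F\otimes K_X$ lies in $\mathcal A_{b-k}$ and is tilt-semistable for the transformed parameter $(b-k,s')$, which is the translate of $(b,s)$ in the $(b,w)$-plane under the map induced by $e^{-kH}$.

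\textbf{Step 2.} I need to compare $F$ and $G$ in a single heart. Both $F$ and $G$ lie in $\langle \mathcal A_b,\mathcal A_b[-1]\rangle$, since $\mathcal A_{b-k}\subset\langle \mathcal A_b,\mathcal A_b[-1]\rangle$ for $k>0$. Let $G_0\in\mathcal A_b$ be the $\mathcal H^0$ of $G$ with respect to the $\mathcal A_b$ $t$-structure. Then any map $F\to G$ factors through $G_0$.

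\textbf{Step 3.} The object $G_0$ is the torsion-free-type part $\mathcal F_b$, namely the sheaf part of $G$ with $\mu_H$-slopes in $(b-k,b]$, modified appropriately. I will show that every $\nu_{b,s}$-HN factor of $G_0$ has slope strictly less than $\nu_{b,s}(F)$. The key input is the explicit shift: $\operatorname{ch}_1^{bH}(F\otimes K_X)=\operatorname{ch}_1^{bH}(F)-k\operatorname{ch}_0(F)H$. Together with positivity of $F$ (rank $>0$, or rank $0$ with $\operatorname{ch}_1.H^2>0$), this shift moves the point $\Pi(G)$ strictly to the "left/below" of $\Pi(F)$ relative to the line through $(b,s)$. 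Once every factor of $G_0$ has smaller slope, semistability of $F$ gives $\operatorname{Hom}(F,G_0)=0$, and therefore $\operatorname{Hom}(F,G)=0$.

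In the rank $0$ case, $\operatorname{ch}_2$ shifts by $-k\operatorname{ch}_1.H$ while $\operatorname{ch}_1.H^2$ is unchanged, which gives the strict decrease of $\nu$ directly. Positivity is used exactly to exclude $\nu=+\infty$ objects such as sheaves supported in dimension $\le 1$, for which $\operatorname{Ext}^3$ can be nonzero.

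I expect Step 3 to be the main obstacle: in the positive-rank case, $G$ is semistable only for the transformed parameter, not for $(b,s)$ itself. I will first try a wall-crossing/support-property argument. Along the segment from $(b-k,s')$ to $(b,s)$, destabilizing walls for $G$ are lines through $\Pi(G)$. Using the generalized Bogomolov–Gieseker inequality of \cite{BMT} (so that all HN factors have $\Delta_H\ge0$ and lie outside $U$), I will show that the $\nu_{b,s}$-slopes of the factors of $G_0$ are bounded above by the slope of the line through $\Pi(G)$ and $(b,s)$. That slope is strictly less than $\nu_{b,s}(F)$ because $\Pi(G)$ is $\Pi(F)$ translated by the Fano shift. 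If this estimate fails, the fallback is to reduce to $\mu_H$-stability of sheaves, via the large-volume limit (\cite[Appendix C]{FT}), where $\operatorname{Hom}(F,F(-k))=0$ follows from slope comparison.
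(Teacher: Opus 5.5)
Your part a) is correct, and more direct than the paper's. So are the Serre-duality reduction $\operatorname{Ext}^3(F,F)\simeq\operatorname{Hom}(F,F\otimes K_X)^{\vee}$ and your Step 2: since $\mathcal A_{b-k}\subset\langle\mathcal A_b,\mathcal A_b[-1]\rangle$, every map $F\to G$ factors through $G_0=\mathcal H^0_{\mathcal A_b}(G)$.

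\textbf{The gap is Step 3.} It is the whole content of b), and it is only announced; the route you sketch does not work.

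\emph{The proposed bound is unjustified.} You claim every $\nu_{b,s}$-HN factor of $G_0$ has slope at most that of the line through $(b,s)$ and $\Pi(G)$, i.e.\ at most $\nu_{b,s}(G)$. When $G\in\mathcal A_b$ (so $G_0=G$), this is literally the statement that $G$ is $\nu_{b,s}$-semistable. Nothing guarantees that. $G$ is only known to be semistable at the translated point $(b-k,s')$, and walls for its class can separate the two points. Knowing that walls are lines through $\Pi(G)$, together with the generalized Bogomolov--Gieseker inequality, does not exclude such a wall.

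\emph{The slope comparison is not the relevant one.} For positive rank, $\operatorname{ch}_1^{bH}(G).H^2=\operatorname{ch}_1^{bH}(F).H^2-k\operatorname{ch}_0(F)H^3$ is negative whenever $b<\mu_H(F)<b+k$. Then $\Pi(G)$ lies to the left of $b$, $G\notin\mathcal A_b$, and the number $\nu_{b,s}(G)$ says nothing about the factors of $G_0$. So ``$\nu_{b,s}(G)<\nu_{b,s}(F)$'' does not give what you need.

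\emph{The rank-zero shortcut has the same defect.} The class of $G$ has $\nu$ smaller by $k$, but $G$ need not be $\nu_{b,s}$-semistable: rank-zero classes still have walls, coming from subobjects of nonzero rank. Also, if $F$ is not a sheaf, $\mathcal H^0(F)$ can have $\mu_H$-HN factors of slope in $(b,b+k]$, and then $G\notin\mathcal A_b$ at all.

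\emph{The large-volume fallback is unavailable.} $F$ is semistable only at the single point $(b,s)$, and it need not be a sheaf, let alone tilt-semistable at large volume.

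\textbf{What is missing} is a way to compare $F$ with its twist, which live in different hearts, without going wall by wall. The paper uses a Mozgovoy-type monotonicity argument.
\begin{itemize}
\item Work with $\sigma_{b,w}$, where $w=\sqrt{6(s-b^2/2)}$. In this normalization, twisting by $\mathcal O(lH)$ just translates $b$ at fixed $w$.
\item Take the path $\sigma_t=\sigma_{b-t,w}$ with central charge $Z_t$, and prove $\operatorname{Im}\bigl(\tfrac{d}{dt}Z_t(E)\cdot\overline{Z_t(E)}\bigr)\geq 0$ for every $\sigma_t$-semistable $E$.
\item At $t=0$ this quantity equals $w^3\bigl((v_1H^2)^2-(v_0H^3)(v_2H)+\tfrac{w^2}{6}(v_0H^3)^2\bigr)$. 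It is nonnegative by the Bogomolov-type inequality $(v_1H^2)^2\geq(v_0H^3)(v_2H)$, which comes from the BMT criterion with $f_{-1,2}$ plus Hodge index.
\item It is strictly positive for positive $E$. This is where positivity genuinely enters; it is not merely a matter of excluding $\nu=+\infty$ objects.
\item Consequently $t\mapsto\sigma_t^{\pm}(F)$ is non-decreasing, strictly so for positive $F$. Hence $\sigma^-_{b,w}(F(lH))=\sigma^-_{b-l,w}(F)>\sigma^+_{b,w}(F)$ for $l>0$, and so $\operatorname{Hom}(F\otimes K_X^{-1},F)=0$.
\end{itemize}
This controls all HN factors of the twist at once. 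Some statement of this kind, or an equivalent argument, is needed to close your Step 3.
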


\begin{proof}

 For a), one uses \cite[Proposition C.4]{FT}, and~\cite[Sections 4.1-4.2]{PiTo}: this combination shows that $F$ lies in the heart of a certain $t$-structure, and, thus, $\operatorname{Ext}^i(F, F) = 0$ for $i < 0.$

The proof of b) will be somewhat more involved.

\textbf{Step 0.} Let us note that for $F$ either 

$\operatorname{ch}_0^{bH}(F) > 0$; or 

$\operatorname{ch}_0^{bH}(F) = 0$, and $\operatorname{ch}_1^{bH}(F) > 0$; see~\cite[p. 4]{FT}.

\textbf{Step 1.} First of all, we modify our stability condition as in~\cite[(2.2)-(2.3)]{FT0}. Namely, by rescaling and adding a constant, one may replace $\nu_{b, s}$ with $\sigma_{b, w}$ defined by the formula

$$ \sigma_{b, w}(E) = 
    \begin{cases}
      \frac{w\operatorname{ch}_2^{bH}(E).H - \frac{1}{6}\operatorname{ch}_0(E).w^3H^3}{w^2\operatorname{ch}_1^{bH}(E).H^2} \ \text{if} \operatorname{ch}_1^{bH}(E).H^2 \neq 0, \ \\
      +\infty \ \text{if} \operatorname{ch}_1^{bH}(E).H^2 = 0\ \\
    \end{cases}$$

for $w$ satisfying $w = \sqrt{6(s - \frac{b^2}{2})}$; in particular, $w > 0$.

This defines the same Harder-Narasimhan filtrations as $\nu_{b, s}$.

\textbf{Step 2.} Second, we introduce the ``central charge''-function for $\sigma_{b, w}$: for $E \in \mathcal A_b$ with $\operatorname{ch}_i(E) =:u_i$, and $\operatorname{ch}_i^{bH}(E) =: v_i$, we define

\begin{multline}\label{centcharge} Z_{b, w}(E) = \mathbf{i}(w^2v_1.H^2) - (wv_2.H - \frac{w^3}{6}v_0.H^3) \\ = \mathbf{i}(w^2(u_1 - bHu_0).H^2) - (w(u_2-bHu_1 + \frac{b^2H^2u_0}{2}).H - \frac{w^3}{6}u_0.H^3). \end{multline}

Let us fix some particular $(b, w)$, and consider $Z_t := Z_{b-t, w}$, and also the stability condition $\sigma_t := \sigma_{b-t, w}$.

We claim:

\begin{equation}\label{comb}\operatorname{Im}(\frac{d}{dt}Z_t(E) \cdot \overline{Z_t}(E)) \geq 0\end{equation}

for any $\sigma_t$-semistable object $E$.

This will be proved below as the Lemma~\ref{ineq2}.  

\textbf{Step 3.} Let us now deduce the desired statement from the equation~\ref{comb}.

Let $\overline{\mathbb H}$ be the upper halfplane: $$\overline{\mathbb H} = \{z \in \mathbb C \ | \ \operatorname{Im}z \geq 0\}.$$ We claim that, for $t \in [t_0, t_0 + \epsilon)$ and for a very small positive $\epsilon$, $\sigma_t(E) \geq \sigma_{t_0}(E).$

Indeed, it follows from the inequality~\ref{comb} that 

$$Z_t(E) \in Z_{t_0}(E) \cdot \bar{\mathbb H}.$$

Moreover, since our $F$ is positive (see part b) of Lemma~\ref{ineq2}), in fact, the strict inequality holds in equation~\ref{comb}. 

Let $\sigma^+$ stand for the maximal Harder-Narasimhan slope, and $\sigma^-$ for the minimal one. The discussion above shows that the function $t \mapsto \sigma_t^-(F)$ is non-decreasing. Indeed, using that the semistability is a condition open in $t$, we see that each $\sigma_t$-semistable Harder-Narasimhan factor of $F$ has a slope not smaller than $\sigma_{t_0}(F)$.

Thus, we can write: $\sigma_1^{-}(F) > \sigma_0(F).$ 

Similarly, $t \mapsto \sigma_t^+(F)$ is non-decreasing as well. 

Now, by induction,

$$\sigma_{b, w}^-(F(lH)) = \sigma_{b-l, w}^-(F) > \sigma_{b, w}^+(F)$$

for any positive integer $l$. Hence, $\operatorname{Hom}(F, F \otimes K_X) = \operatorname{Hom}(F \otimes K_X^{-1}, F) = 0$, and the desired statement follows by Serre duality.
\end{proof}

\begin{lemma}\label{ineq2} a) \begin{equation}\label{lem}\operatorname{Im}((\frac{d}{dt}Z_t(E)) \cdot \overline{Z_t}(E)) \geq 0\end{equation}

for any $\nu_{b, w}$-semistable $E$ from $\mathcal A_b$.

b) If $E$ is positive in the sense of Theorem~\ref{Extv}, the strict inequality holds in the equation~\ref{lem}.
\end{lemma}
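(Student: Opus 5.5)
The plan is a direct computation: express $\operatorname{Im}\bigl(\tfrac{d}{dt}Z_t(E)\cdot\overline{Z_t}(E)\bigr)$ as a polynomial in the twisted Chern numbers of $E$, and show it is a sum of nonnegative terms. One of these terms is controlled by the Bogomolov--Gieseker inequality for tilt-semistable objects, which is the only nontrivial input.

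\textbf{Setting up.} Fix $t$ and put
\[
x:=\operatorname{ch}_0^{(b-t)H}(E).H^3,\quad y:=\operatorname{ch}_1^{(b-t)H}(E).H^2,\quad z:=\operatorname{ch}_2^{(b-t)H}(E).H .
\]
Since $\operatorname{ch}^{(b-t)H}=\operatorname{ch}\cdot e^{-(b-t)H}$, differentiating in $t$ gives $\tfrac{d}{dt}\operatorname{ch}^{(b-t)H}=H\cdot\operatorname{ch}^{(b-t)H}$. Hence $x'=0$, $y'=x$ and $z'=y$. By formula (\ref{centcharge}),
\[
Z_t=\mathbf i\, w^2y-\bigl(wz-\tfrac{w^3}{6}x\bigr),\qquad \tfrac{d}{dt}Z_t=\mathbf i\, w^2x-wy .
\]
Multiplying and taking imaginary parts, I expect
\[
\operatorname{Im}\bigl(\tfrac{d}{dt}Z_t\cdot\overline{Z_t}\bigr)=w^3y^2-w^3xz+\tfrac{w^5}{6}x^2
=\tfrac{w^3}{2}\bigl(y^2-2xz\bigr)+\tfrac{w^3}{2}y^2+\tfrac{w^5}{6}x^2 .
\]
I will double-check the signs carefully, since the conclusion hinges on them.

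\textbf{Part a).} The expression $y^2-2xz$ is exactly $\Delta_H(E)$, computed with the twisted character. The quantity $\Delta_H$ is invariant under twisting by $e^{-(b-t)H}$, so it equals the untwisted $\Delta_H(E)$. The key input is the Bogomolov--Gieseker inequality $\Delta_H(E)\ge 0$ for $\nu_{b,w}$-semistable objects of $\mathcal A_b$; this is the classical BG inequality for tilt-stability from \cite{BMT}, and it holds on every Fano threefold. The remaining two terms are manifestly nonnegative because $w>0$. Hence the whole expression is $\ge 0$, which is a).

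The main point to verify here is that the relevant semistability is preserved by the reparametrisation of Step 1 of Theorem~\ref{Extv}. That is, $\sigma_t$-semistability of $E$ should be the same as $\nu_{b-t,s}$-semistability, so that BMT applies for each $t$. This holds because $\sigma$ is an affine rescaling of $\nu$ at fixed $b$.

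\textbf{Part b).} If $E$ is positive, then by Step 0 either $x>0$, or $x=0$ and $y>0$. In the first case the term $\tfrac{w^5}{6}x^2$ is strictly positive. In the second case the term $\tfrac{w^3}{2}y^2$ is strictly positive. In both cases the other terms are $\ge 0$ by a), so the inequality in (\ref{lem}) is strict.
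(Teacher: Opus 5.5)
Your proposal is correct and is essentially the paper's proof. It makes the same computation $\operatorname{Im}\bigl(\tfrac{d}{dt}Z_t\cdot\overline{Z_t}\bigr)=w^3\bigl(y^2-xz+\tfrac{w^2}{6}x^2\bigr)$, uses a Bogomolov--Gieseker input for tilt-semistable objects in a), and makes the same case split ($x>0$ versus $x=0,\ y>0$) in b). The only difference is cosmetic: the paper gets $y^2-xz\ge 0$ from the Bayer--Macr\`i--Toda $f_{a,b}$-inequality with $(a,b)=(-1,2)$, checking $f_{-1,2}\ge 0$ via Hodge index, whereas you write $y^2-xz=\tfrac12\Delta_H(E)+\tfrac12y^2$ and quote the standard $\Delta_H(E)\ge 0$; this yields the identical inequality, and your use of the twist-invariance of $\Delta_H$ also makes the paper's reduction to $t=0$ unnecessary.
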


\begin{proof}

a) This  follows from the usual Bogomolov inequality (that is, not the generalized one!).

Namely, as in~\cite[Theorem 7.2]{Mozg}, we have only to prove the statement for $t = 0.$

Then,

$$\frac{d}{dt}Z_t(E)|_{t = 0} = \mathbf{i}(u_0w^2H^3) - (wH^2(u_1 -b H u_0)) = \mathbf{i}v_0w^2H^3 - wH^2v_1, $$

and 

\begin{equation}\label{pos} \operatorname{Im}({Z}_0^{'}(E) \cdot \overline{Z}_0(E)) = w^3((v_1H^2)^2 -(v_0H^3)(v_2H) + w^2v_0^2(\frac{(H^3)^2}{6})). \end{equation}

It is enough to prove the following:

\begin{equation}\label{ineq} (v_1H^2)^2 - (v_0H^3)(v_2H) \geq 0. \end{equation}

This is very similar to the usual Bogomolov-Gieseker inequality. Hence, we use the machinery of \cite[p.34]{BMT}: their argument (by introducing certain functions $f_{a, b}$) shows that one may prove instead that 

\begin{equation}\label{ineq1} f_{-1, 2}(x) := 2(xH^2)^2 - (H^3)(x^2H) \geq 0 \end{equation}

for $x \in \operatorname{NS}_{\mathbb R}(X);$ we explain the details in Remark \ref{posrem} below.

Let us write down: $x = \alpha H + D$, where $D$ is orthogonal to $H$
 under the pairing $\langle \cdot, \cdot \rangle_{H}$ on $\operatorname{NS}_{\mathbb R}(X)$ given by the formula $\langle A, B \rangle_H = A \cdot H \cdot B$. By the (generalized) Hodge index theorem, $D^2H \leq 0$. 

 Now, \begin{align} x^2H = (\alpha H + D)^2H = \alpha^2H^3 + D^2H; \\ xH^2 = \alpha H^3 + DH^2 = \alpha H^3. \end{align}

 Multiply the first by $H^3$, square the second, and subtract:

$$(x^2H)H^3 - (xH^2)^2 = (D^2H)H^3 \leq 0.$$

Thus, 

$$(x^2H)H^3 \leq (xH^2)^2 \leq 2(xH^2)^2.$$

 b) Suppose that $\operatorname{ch}_0(E) > 0$. Then, the third summand on the RHS of the equation~\ref{pos} is strictly positive, and we are done.

For $\operatorname{ch}_0(E) = 0$, only the first summand on the RHS of~\ref{pos} remains, and, by the definition of the positivity, this gives a strictly positive result.

\end{proof}

\begin{remark}\label{posrem}
Here we give a recollection of the result from \cite{BMT} promised above.

For \(a,b\in \mathbb{R}\), we set
\[
f_{a,b}\colon \operatorname{NS}_{\mathbb{Q}}(X)\longrightarrow \mathbb{R}
\]
to be
\[
f_{a,b}(x) := aH^3\cdot x^2H + b(xH^2)^2 .
\]

Recall also that the discriminant \(\Delta(E) \in A^2_{\mathbb{Q}}(X)\) of an object
\(E \in \mathrm{D}^{\mathrm{b}}(X)\) is defined by
\[
\begin{aligned}
\Delta(E)
&= \operatorname{ch}_1(E)^2 - 2 \operatorname{ch}_0(E)\operatorname{ch}_2(E) \\
&= \operatorname{ch}_1^{bH}(E)^2
   - 2 \operatorname{ch}_0^{bH}(E)\operatorname{ch}_2^{bH}(E).
\end{aligned}
\]

The result of \cite[p. 34]{BMT} is as follows:

\begin{theorem}
Take
\(a \in \mathbb{R}_{\geq -1}\) and \(b \in \mathbb{R}\) such that
\(f_{a,b}\) satisfies the following conditions:
\[
\begin{array}{ll}
\mathrm{(a)} & f_{a,b}(x) \geq 0,
\quad \text{for any } x \in \operatorname{NS}_{\mathbb{Q}}(X), \\[4pt]
\mathrm{(b)} & f_{a+1,b}(x) \geq 0,
\quad \text{for any effective class } x \in \operatorname{NS}_{\mathbb{Q}}(X).
\end{array}
\]
Then, for any $\sigma_{b, w}$-semistable object
in $\mathcal A_b$, we have the following inequality:
\begin{equation}
H^3 \cdot H \Delta(E)
+ f_{a,b}\bigl(\operatorname{ch}_1^{bH}(E)\bigr) \geq 0 .
\end{equation}
\end{theorem}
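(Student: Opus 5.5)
The plan is to follow the original strategy of \cite{BMT}: induct on the discrete invariant $\operatorname{ch}_1^{bH}(E).H^2 > 0$ (for $\operatorname{ch}_1^{bH}(E).H^2 = 0$ the slope is $+\infty$ and the statement reduces to the Bogomolov--Gieseker inequality for the corresponding sheaves together with condition (b)), and within a fixed value of this invariant, deform the parameter $w$ upwards towards the large volume limit. A preliminary remark makes the deformation possible. The left hand side
\[
Q(E) := H^3\cdot H\Delta(E) + f_{a,b}\bigl(\operatorname{ch}_1^{bH}(E)\bigr)
\]
depends only on $\operatorname{ch}(E)$ and on $b$, not on $w$. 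Here $\Delta$ is independent of the twist, as recalled above. So it suffices to prove the inequality for one value of $w$ at which $E$ is semistable, and to show that it propagates across walls.

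\textbf{Large volume limit.} Suppose $E$ stays $\sigma_{b,w'}$-semistable for all $w' \geq w$. Then, as in the large volume analysis used in \cite{FT} and \cite{BMT}, $E$ falls into one of three cases:
\begin{itemize}
\item $E$ is a torsion-free slope-semistable sheaf, or a torsion sheaf supported in dimension $\leq 2$;
\item $\operatorname{ch}_0(E) < 0$ and $E$ is of the form $H^{-1}(E)[1]$ with $H^{-1}(E)$ slope-semistable (up to objects supported in dimension $0$, which do not affect $Q$).
\end{itemize}
In the torsion-free cases I would use the classical Bogomolov inequality $H\Delta(E)\geq 0$ together with condition (a), $f_{a,b}(\operatorname{ch}_1^{bH}(E))\geq 0$. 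In the torsion case $\operatorname{ch}_0 = 0$, so
\[
H^3\cdot H\Delta(E) = H^3\cdot \operatorname{ch}_1^2 H \quad\text{and}\quad Q(E) = f_{a+1,b}(\operatorname{ch}_1(E)).
\]
Since $\operatorname{ch}_1(E)$ is effective, this is nonnegative precisely by condition (b). The role of the assumption $a\geq -1$ is, I expect, to guarantee that $f_{a,b}$ is dominated correctly when combining the two terms.

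\textbf{Walls.} Otherwise there is a smallest wall $w_0 > w$ at which $E$ becomes strictly semistable, with Jordan--H\"older (or HN just above the wall) factors $E_1,\dots,E_k$ of the same $\sigma_{b,w_0}$-slope. Each factor has strictly smaller $\operatorname{ch}_1^{bH}.H^2$, since all these values are positive and they add up. By induction $Q(E_i)\geq 0$. It remains to show that $Q$ is superadditive along the wall, i.e. $Q(E)\geq \sum Q(E_i)$. Equivalently, the symmetric bilinear form $Q(E_i,E_j)$ is $\geq 0$ for factors whose central charges $Z_{b,w_0}$ are aligned on a ray.

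This is the main obstacle. I would handle it by the standard observation from \cite{BMT} and \cite[Appendix]{BMS}. The classes $(\operatorname{ch}_0.H^3, \operatorname{ch}_1^{bH}.H^2, \operatorname{ch}_2^{bH}.H)$ of the factors lie on a common ray of the kernel-complement of $Z_{b,w_0}$ projected to the plane. Moreover, the quadratic form $\Delta_H$, restricted to the 3-dimensional space spanned by these coordinates, has signature $(2,1)$ and is negative definite on $\ker Z_{b,w_0}$, because $w_0 > b^2/2$. This gives $\Delta_H(E_i,E_j)\geq 0$ for aligned factors.

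The extra term $f_{a,b}$ is not a function of the $H$-projected data alone. To control it I would split $\operatorname{ch}_1^{bH} = \lambda H + D$ with $D\cdot H^2 = 0$ as in the proof of Lemma \ref{ineq2}, and use the Hodge index inequality $D^2H\leq 0$ to reduce the cross terms to the $H$-projected ones plus a sign-definite contribution.

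If superadditivity in this naive form fails, the fallback is the reduction to $Q(E)\geq \min_i Q(E_i)$-type estimates of \cite{BMT}, p.~34, which combine the induction with a convexity argument along the wall.
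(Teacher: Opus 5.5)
The paper gives no proof of this statement; it quotes it from \cite{BMT}. Your architecture is the one used there: induction on $\operatorname{ch}_1^{bH}.H^2$, pushing $w$ to the large volume limit, and propagating the inequality across walls. But the one step with real content, $Q(E_i,E_j)\ge 0$ for factors aligned on a wall, is not delivered by what you propose.

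Put $x=\operatorname{ch}_0.H^3$, $y=\operatorname{ch}_1^{bH}.H^2$, $z=\operatorname{ch}_2^{bH}.H$, and write $\operatorname{ch}_1^{bH}=\frac{y}{H^3}H+D$ with $D.H^2=0$. Then
\[
Q=Q_H+(1+a)H^3\,(H.D^2),\qquad Q_H:=(1+a+b)\,y^2-2xz,
\]
so $Q(E_i,E_j)=Q_H(E_i,E_j)+(1+a)H^3\,(H.D_i.D_j)$. Hodge index makes $H.D^2$ negative definite on $H^{\perp}$. However, the mixed values $H.D_i.D_j$ have no sign (compare $D_j=D_i$ with $D_j=-D_i$). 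So there is no ``sign-definite contribution'' to split off, and a lemma about the $H$-projected form $\Delta_H$ cannot see this term at all. Your guess about the role of $a\ge -1$ also misses the point: it is exactly what this step needs.

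The missing idea is to absorb the mixed $D$-term using the induction hypotheses themselves.

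\begin{itemize}
\item Since $1+a\ge 0$ and $H.D_i^2\le 0$, the hypothesis $Q(E_i)\ge 0$ gives $Q_H(E_i)\ge\delta_i:=(1+a)H^3\,(-H.D_i^2)\ge 0$.
\item The $(x,y,z)$-projections of the aligned factors lie in one plane containing the line $\ker Z_{b,w_0}=\{y=0,\ z=sx\}$, for some $s>0$. On that line $Q_H=-2sx^2<0$, so $Q_H$ has at most one positive direction on the plane.
\item The factors all have $y_i>0$, so they lie in a single nappe of $\{Q_H\ge 0\}$. Reverse Cauchy--Schwarz then gives $Q_H(E_i,E_j)\ge\sqrt{Q_H(E_i)Q_H(E_j)}\ge\sqrt{\delta_i\delta_j}$.
\item Ordinary Cauchy--Schwarz for the positive definite form $-H.(-).(-)$ on $H^{\perp}$ gives $(1+a)H^3\,|H.D_i.D_j|\le\sqrt{\delta_i\delta_j}$.
\end{itemize}

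Together these give $Q(E_i,E_j)\ge 0$. Equivalently, apply the usual ``negative definite on $\ker Z$'' lemma to the full form $Q$ on $(\operatorname{ch}_0,\operatorname{ch}_1^{bH},\operatorname{ch}_2^{bH}.H)$ rather than to $\Delta_H$. Its restriction to the full kernel $\{y=0,\ z=sx\}\times H^{\perp}$ is $-2sx^2+(1+a)H^3\,(H.D^2)$. This is negative definite precisely because $a\ge -1$; for $a=-1$ the $D$-dependence drops out.

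There are also three smaller corrections.
\begin{itemize}
\item Objects with $\operatorname{ch}_1^{bH}.H^2=0$ are extensions of a sheaf supported in dimension $\le 1$ by $F[1]$, with $F$ slope-semistable of slope $b$. That case uses Bogomolov for $F$ and condition (a), not (b).
\item In the large volume limit $H^0(E)$ can be $1$-dimensional, not just $0$-dimensional. This changes $H.\Delta$ by $2\operatorname{rk}(F)\,H.\operatorname{ch}_2(H^0(E))\ge 0$ rather than leaving it unchanged.
\item Discreteness of $\operatorname{ch}_1^{bH}.H^2$, which your induction needs, requires $b\in\mathbb Q$.
\end{itemize}
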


For $a = -1, b = 2$, thus, the statement of equation \ref{ineq}, that is, $$(v_1H^2)^2 - (v_0H^3)(v_2H) \geq 0,$$

would follow from the positivity of $f_{-1, 2}$ and $f_{0, 2}$. The first one was proved above, the second one is trivial.
\end{remark}

\section{The \texorpdfstring{$K$}{K}-theoretic wall-crossing}\label{stablewc}

\subsection{The algorithm}\label{Subsecalg}
In this section, we prove the $K$-theoretic version of Theorem~\ref{mainthm}, and, up to Subsection~\ref{sec: arbcl}, assume that for the given class $\mathbf v \in K(X)$ of $\operatorname{rank} > 0$, the condition $\operatorname{ch}_1(\mathbf v).H^2 = 0$ holds. In Subsection~\ref{sec: arbcl}, we explain how this assumption can be removed.

Our algorithm is, in broad strokes, as follows: working in the class $\mathbf v$ and with the formulas of Theorem~\ref{nonabl}, we cross walls from Gieseker stability to tilt-stability. 

The tilt-stability is known to be equivalent to the $(b, w)$-stability in the large volume limit. From this region, we go down to the Joyce-Song wall. Theorem~\ref{nonabl} at the Joyce-Song wall gives the expression for the corresponding class $\mathbf v$-integral in terms of  the lower rank descendants (in the sense of Subsection~\ref{KThdescend}).

After that, we implement the same procedure one more time, and wall-cross these lower-rank integrals back to Gieseker stability.

\subsection{Stability conditions}\label{sec: stabcond}
Let us, first of all, consider the following two stability conditions for sheaves of class $\mathbf v$. 

The definitions are as follows (we follow~\cite[Section 4]{FT}, though the material is standard). 

To any sheaf $E \in \operatorname{Coh} X$ of  topological type $\alpha \in K(X)$, one canonically associates its Hilbert polynomial $p_{[E]}$:

$$P(\alpha)(t) = \chi(\alpha(t)) = a_dt^d + a_{d-1}t^{d-1} + ... + a_0,$$

where $d \leq 3,$ and $a_d \neq 0$.

The \textit{reduced} Hilbert polynomial $p(\alpha)(t)$ is obtained by dividing by the leading coefficient of $P(\alpha)$:

$$p(\alpha)(t) = \frac{P(\alpha)(t)}{a_d}.$$

Following Joyce~\cite[Section 3.3]{Jo}, we introduce a total order $\prec$ on monic polynomials: $p \prec q$ if either \begin{enumerate} \item $\operatorname{deg} p > \operatorname{deg} q,$ or \item $\operatorname{deg} p = \operatorname{deg} q,$ and $p(t) < q(t)$ for $t >> 0$. \end{enumerate}

The sheaf $E \in \operatorname{Coh} X$ is called Gieseker (semi)stable iff for any short exact sequence of the form $0 \to A \to E \to B \to 0$ in $\operatorname{Coh} X$, one has $p_{[A]} \prec p_{[B]}$ ( $p_{[A]} \ \preccurlyeq p_{[B]}$, respectively).

Unfortunately, the notion of Gieseker stability is not invariant under tensor multiplication by line bundles.

As in~\cite{Rek}, one introduces $D$-Gieseker stability for a divisor $D$.

\begin{definition}
The sheaf $F$ is called $D$-Gieseker stable iff $F(-D)$ is Gieseker stable. This is equivalent to the stability defined using the twisted Hilbert polynomial $p(\alpha)^D(t) := p_{\alpha(-D)}(t).$
\end{definition}

We will denote the moduli space of $D$-Gieseker semistable objects by $\mathcal M_{Gies}^D(\mathbf v).$

The motivation for this (seemingly redundant) definition is the proposition below, see~\cite[Lemma 2.1.11]{Rek}.

\begin{definition} Let us consider the usual $\mu$- and $\nu$-slopes:

$$\mu(F) = \frac{H^{2}\operatorname{ch}_1(F)}{\operatorname{rk} F};  \ \nu(F) = \frac{H\operatorname{ch}_2(F)}{\operatorname{rk} F}.$$

Then, we say $F$ is tilt semistable if for every subsheaf $E \hookrightarrow F$, either

\begin{itemize}
\item $\mu(E) < \mu(F/E)$, or
\item $\mu(E) = \mu(F/E)$, and $\nu(E) \leq \nu(F/E)$

if all these quantities are well-defined.
\end{itemize}
Tilt stable objects are defined by replacing $\leq$ in the \textit{last} inequality by $<$.
\end{definition}

\begin{proposition}\label{Rek0}
$-K_X/2$-Gieseker stability is dominated by tilt-stability.
\end{proposition}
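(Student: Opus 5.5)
The plan is to compute the twisted reduced Hilbert polynomial explicitly with Hirzebruch--Riemann--Roch. The goal is to show that its first three coefficients encode, up to fixed positive rescalings and additive constants independent of the class, exactly the pair $(\mu,\nu)$. Gieseker comparison of reduced polynomials is lexicographic in the coefficients, so it would then refine the lexicographic $(\mu,\nu)$-comparison defining tilt-stability. In the sense of Definition~\ref{weakdom}, domination means: if $p^{D}(\gamma)\preccurlyeq p^{D}(\beta)$ for Gieseker stability, then $(\mu,\nu)(\gamma)\le(\mu,\nu)(\beta)$ for tilt-stability, with $D=-K_X/2$. I will prove the contrapositive: a strict tilt inequality $(\mu,\nu)(\gamma)>(\mu,\nu)(\beta)$ forces $p^D(\gamma)\succ p^D(\beta)$. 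Applied to subsheaves and quotients, this also gives the inclusion of $-K_X/2$-Gieseker semistable sheaves into tilt-semistable ones.

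\textbf{Step 1.} For a sheaf $F$ of positive rank $r$, write
$$P^D(F)(t)=\int_X \operatorname{ch}(F)\,e^{tH}\,e^{K_X/2}\operatorname{td}(X).$$
Since $\operatorname{td}(X)=1-\tfrac{K_X}{2}+\tfrac{K_X^2+c_2}{12}+\ldots$, the product $e^{K_X/2}\operatorname{td}(X)$ has vanishing degree-one part. This is exactly why the twist by $-K_X/2$ is chosen. Its degree-two part is a fixed class $c\in H^4(X,\mathbb Q)$, namely $\tfrac{K_X^2}{8}-\tfrac{K_X^2}{4}+\tfrac{K_X^2+c_2}{12}$.

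\textbf{Step 2.} Expanding gives
$$P^D(F)(t)=\tfrac{r H^3}{6}t^3+\tfrac{\operatorname{ch}_1 H^2}{2}t^2+\bigl(\operatorname{ch}_2 H+ r\,cH\bigr)t+(\text{const}).$$
Dividing by the leading coefficient, the reduced polynomial is
$$t^3+\tfrac{3}{H^3}\mu(F)\,t^2+\tfrac{6}{H^3}\bigl(\nu(F)+cH\bigr)t+\ldots$$
The crucial point is that no term of the form $\operatorname{ch}_1\cdot K_X\cdot H$ appears in the $t$-coefficient, because the degree-one part of the twisted Todd class vanishes. Hence the $t^2$- and $t$-coefficients are strictly increasing affine functions of $\mu$ and $\nu$, with shifts common to all classes.

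\textbf{Step 3.} By the definition of $\prec$ on monic polynomials of equal degree, $p\prec q$ holds iff the coefficients of $p$ are lexicographically smaller. Therefore, if $\mu(\gamma)>\mu(\beta)$, or if $\mu(\gamma)=\mu(\beta)$ and $\nu(\gamma)>\nu(\beta)$, then $p^D(\gamma)\succ p^D(\beta)$, which is the required contrapositive. I will treat rank $0$ classes separately. For them $\mu$ is $+\infty$; in the Gieseker order, degree $<3$ makes $p$ larger, since larger degree is smaller in $\prec$. This is consistent with the convention that torsion has slope $+\infty$, so comparisons involving them are automatically compatible.

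The main obstacle I expect is bookkeeping rather than substance. First, the $t$-coefficient must be handled with care, and it is in fact the only place where the choice $D=-K_X/2$ matters. Second, the various conventions must be matched: the ordering $\prec$ in which higher degree is smaller, and the strict versus non-strict inequalities in Definition~\ref{weakdom}. I would double-check that the sign of $c$ is irrelevant, since $c$ enters identically for all classes.
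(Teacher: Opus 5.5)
Your Riemann--Roch expansion of the $-K_X/2$-twisted reduced Hilbert polynomial is correct, and it is the standard argument behind this statement; the paper gives no proof of its own and simply refers to Rekuski's Lemma~2.1.11.

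One small overstatement: the paper assumes $K_X$ is a multiple of $H$. In that setting, the extra untwisted term $-\operatorname{ch}_1\cdot H\cdot K_X/(2r)$ in the $t$-coefficient would be a fixed multiple of $\mu$. It would therefore cancel whenever the $t^2$-coefficients agree, so the twist is not actually what makes the domination work here; it becomes essential only when $K_X$ is not proportional to $H$.
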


Moreover, as in~\cite[3.2.3]{Rek}, we get

\begin{proposition}\label{Rek} For $w >>0$ and $b < c$, for some constant $c$ depending on the topological type, tilt semistability for a torsion-free $E$ is equivalent to $(b, w)$-semistability.
\end{proposition}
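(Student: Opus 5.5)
The plan is to use the standard large volume limit argument, as in \cite[3.2.3]{Rek} and \cite[Appendix]{FT}. It compares the $\nu_{b,w}$-slope with the pair of slopes $(\mu,\nu)$ ordered lexicographically. Fix the topological type $\alpha$ of a torsion-free sheaf $E$ with $\operatorname{rk}\alpha>0$, and take $b<\mu_H(\alpha)$. For $b$ below the minimal slope of every potential destabilizing factor, $E$ lies in $\mathcal A_b$, since its HN factors all have $\mu_H>b$. The first step is to expand $\nu_{b,w}(F)$ for $F$ in $\mathcal A_b$ as $w\to\infty$. For a sheaf with $\operatorname{ch}_0>0$ we have
$$\nu_{b,w}(F)=\frac{\operatorname{ch}_2 H-\operatorname{ch}_0 wH^3}{\operatorname{ch}_1^{bH}H^2}=-\frac{w}{\mu_H(F)-b}+\frac{\nu(F)}{(\mu_H(F)-b)H^3}.$$
So to leading order in $w$, $\nu_{b,w}$ orders objects by $\mu_H$ (a larger $\mu_H$ gives a larger $\nu_{b,w}$). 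Among objects of equal $\mu_H$, the subleading term orders them by $\nu=\operatorname{ch}_2 H/\operatorname{rk}$. Torsion sheaves supported in dimension $\le 2$ have $\nu_{b,w}$ either $+\infty$ or of order $O(1)$ in $w$, so they are larger than any positive rank object. This is the lexicographic comparison defining tilt stability.

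Next I would prove the two implications. First suppose $E$ is $(b,w)$-semistable with $w\gg0$. Then $E$ has no torsion quotients in $\mathcal A_b$ of a bad kind, and any subsheaf $E'\subset E$ with torsion-free quotient gives a short exact sequence in $\mathcal A_b$. So the inequality $\nu_{b,w}(E')\le\nu_{b,w}(E/E')$ for all $w\gg0$ forces the tilt inequality by the expansion above. Conversely, suppose $E$ is tilt semistable but $(b,w)$-destabilized by a sequence $0\to A\to E\to B\to0$ in $\mathcal A_b$. The long exact cohomology sequence shows that $A$ is a sheaf and $\mathcal H^{-1}(B)$ is a sheaf with slopes $\le b$. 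Then $\mathcal H^0(A)\to E$ has kernel $\mathcal H^{-1}(B)$, and the image $A'$ is a subsheaf of $E$. Its invariants differ from those of $A$ by a class of $\mu_H\le b$. When $b$ is sufficiently negative (below the constant $c$), such a modification only decreases $\nu_{b,w}(A)$ in the limit, so $A'$ destabilizes $E$ in the tilt sense, which is a contradiction.

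The main obstacle is uniformity. The threshold $w_0$ must be independent of the (a priori infinitely many) destabilizing subobjects, and likewise the constant $c$. To get this, I would invoke boundedness: the $(b,w)$-walls for a fixed class $\alpha$ are locally finite by \cite[Theorem C.5]{FT}, and the Bogomolov--Gieseker inequality confines destabilizing classes to a finite set. I would first check that only finitely many values of $(\operatorname{ch}_0,\operatorname{ch}_1H^2,\operatorname{ch}_2H)$ can occur for destabilizing subobjects, using $\Delta_H\ge0$ together with the bounds $0<\operatorname{ch}_1^{bH}(A)H^2<\operatorname{ch}_1^{bH}(E)H^2$. Given that finiteness, $w_0$ and $c$ can be taken as a maximum, respectively minimum, over finitely many classes.
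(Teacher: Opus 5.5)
Your outline is correct and follows the same route as the paper, which gives no argument of its own beyond invoking the large-volume-limit comparison of \cite[3.2.3]{Rek}: expand $\nu_{b,w}$ in $w$ to recover the lexicographic $(\mu,\nu)$ order, replace a destabilizing subobject $A$ by its image in $E$ to absorb $\mathcal H^{-1}(B)$, and get a uniform threshold from Bogomolov--Gieseker-type bounds on semistable (HN) factors, as you sketch. Two minor fixes: a subsheaf $E'\subset E$ lies in $\mathcal A_b$ only if $\mu_H^-(E')>b$, so in the first direction you should test only the maximal $\mu_H$-destabilizing subsheaf and the subsheaves of slope $\mu_H(E)$ (these are $\mu_H$-semistable of slope $>b$); and in the second direction no extra negativity of $b$ is needed, since for any $b<\mu_H(\alpha)$ a nonzero $\mathcal H^{-1}(B)$ (positive rank, $\operatorname{ch}_1^{bH}.H^2\le 0$) strictly lowers $\mu_H$ when passing from the image $A'$ to $A$.
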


\subsection{Gieseker to $(b, w)$}\label{Giesbw}
We suppose in this section that $\mathbf v$ is either of $\operatorname{rank} > 0$, or corresponds to sheaves of pure $\operatorname{dimension} 2$.
 
For a descendant $\kappa$ and some class $\mathbf w$, we will calculate
\[
\chi(\mathbfcal M_{Gies}(\mathbf w), \kappa \otimes \epsilon).
\]
Using the material of Subsection~\ref{sec: stabcond}, we can equivalently write this as
\[
\chi(\mathbfcal M_{Gies}^{-K_X/2}(\mathbf v), \tau \otimes \epsilon),
\qquad
\mathbf v = \mathbf w \otimes \mathcal O(-K_X/2),
\]
where $\tau$ is the induced descendant.

As in Proposition~\ref{Rek0}, the tilt-stability weakly dominates the $-K_X/2$-Gieseker stability for all $\mathbf v$ as above.

Now, we can use the wall-crossing of Section~\ref{KM} to obtain

\begin{align}\label{Giestilt}
\chi(\mathbfcal M(\mathbf v), \tau \otimes \epsilon^{ti}_{\mathbf v})
&= \chi(\mathbfcal M(\mathbf v), \tau \otimes \epsilon_{Gies}^{-K_X/2}(\mathbf v)) \nonumber\\
&\quad + \sum_k \chi\left(\prod_i \mathbfcal M(\mathbf{v}_{i, k}),
\tau_i\otimes [\boxtimes \epsilon_{Gies}^{-K_X/2}(\mathbf v_{i, k})]\right).
\end{align}
Here further summands correspond to the HN-filtrations of the Gieseker unstable loci on $\mathcal M_{ti}(\mathbf v)$. In particular, all of the classes $\mathbf{v}_{i, k}$ are of some rank $r(\mathbf{v}_{i, k})$, $$0 < r(\mathbf{v}_{i, k}) < r(\mathbf v)$$ ($\operatorname{rank} 0$-semistable summands are prohibited  by the definition of the stability).

Thus, by  Proposition~\ref{Rek}, we are reduced to the calculation of $\chi(\mathcal M_{b, w}(\mathbf v), \epsilon_{b, w}(\mathbf v) \otimes \tau)$ for $w$ large enough and $b$ small enough. As in~\cite{FT}, $w$ just above the Joyce-Song wall from Section~\ref{FTsec} is large enough.

\subsection{The warm up: the  $\operatorname{rank} 2$-situation}\label{rk2wc}

Let $\mathbf v \in K(X)$ be a $\operatorname{rank} 2-$class, $v_n$ be as in Subsection~\ref{FTsec}, and let $n$ be large enough as in Theorem~\ref{pairs}.

We employ an argument a bit different from that of~\cite{FT}.

Let us consider $(b, w)$ just above the Joyce-Song wall $l_{JS}$.

Let ${\underline{\mathcal P}}_n^{(b, w)}(\mathbf v)$ be the moduli stack of Joyce-Song pairs with $(b, w)$-semistable \textit{sheaves} of the corresponding rank. As in~\cite[Appendix B]{FT}, there exists the map $\phi: {\underline{\mathcal P}}_n^{(b, w)}(\mathbf v) \to {\underline{\mathcal M}}_{(b, w)}(v_n)$  which sends a stable pair to its cokernel. The map $\phi$ has a natural derived version $\Phi: \underline{\mathbfcal P}_n^{(b, w)}(\mathbf v) \to {\underline{\mathbfcal M}}_{(b, w)}(v_n)$.

Note that, unlike~\cite{FT}, we use here not the rigidified stack of stable pairs but the non-rigidified one. The relation is as follows: $${\underline{\mathbfcal P}}_n^{(b, w)}(\mathbf v) = \mathbfcal P_n^{(b, w)}(\mathbf v) \times B\mathbb G_m$$ (see~\cite[Eq. (5.24)]{Joy}).

Hence, \cite[Theorem B.2]{FT} can be reformulated as follows.

\begin{proposition}\label{cokernel} $\Phi$ is an isomorphism of  derived stacks.
\end{proposition}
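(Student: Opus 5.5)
We prove Proposition~\ref{cokernel} by upgrading \cite[Theorem B.2]{FT} from classical stacks to derived stacks. The standard criterion is that a morphism of derived Artin stacks is an isomorphism if and only if two things hold: it induces an isomorphism on classical truncations, and it induces a quasi-isomorphism of cotangent complexes $\Phi^*\mathbb L_{\underline{\mathbfcal M}_{(b,w)}(v_n)} \to \mathbb L_{\underline{\mathbfcal P}_n^{(b,w)}(\mathbf v)}$. The plan is therefore:
\begin{enumerate}
\item construct $\Phi$ at the level of derived moduli functors;
\item check that its classical truncation is $\phi$ (after accounting for rigidification);
\item compare deformation theories pointwise.
\end{enumerate}

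\textbf{Construction.} A family of pairs $\rho: V\otimes\mathcal O(-n)\to E$ over a derived base $S$ gives the two-term complex $K=\operatorname{cone}(\rho)[-1]$, viewed as a family of objects of $D^b(X)$. We choose the sign and shift so that $K$ has class $v_n=\mathbf v-[\mathcal O(-n)]$ and lies in $\mathcal A_b$; since $\mathcal O(-n)[1]\in\mathcal A_b$ for $b$ in our range, this is the object the paper calls the cokernel. This assignment is functorial in $S$, so it defines $\Phi$ on the Toën--Vaquié derived enhancements \cite{TV}. It lands in the open substack of $(b,w)$-semistable objects by the classical statement, because openness is a property of classical points.

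\textbf{Classical truncation.} On classical truncations, \cite[Theorem B.2]{FT} states that $\phi$ from the rigidified pair stack to the semistable locus is an isomorphism onto the relevant component. The $B\mathbb G_m$ factor in $\underline{\mathbfcal P}_n^{(b,w)}(\mathbf v)=\mathbfcal P_n^{(b,w)}(\mathbf v)\times B\mathbb G_m$ matches the scalar automorphisms of $K$. This holds because stable pairs have automorphism group exactly $\mathbb G_m$, and the automorphism groups of $K$ are likewise $\mathbb G_m$; the latter is also part of the FT argument, since the semistable objects just above $l_{JS}$ are stable. So the classical truncation of $\Phi$ is an isomorphism.

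\textbf{Cotangent complexes.} At a point $[\rho]$, the tangent complex of the non-rigidified pair stack is $\operatorname{RHom}(I_\rho,E)$, where $I_\rho$ denotes the complex $V\otimes\mathcal O(-n)\to E$ with $E$ in degree $0$. It fits into the triangle
\[
\operatorname{RHom}(E,E)\to\operatorname{RHom}(V\otimes\mathcal O(-n),E)\to T_{\rho}[1],
\]
or more precisely into the standard Joyce--Song description using $\operatorname{Hom}(\rho,\rho)$ in $\acute{\mathcal A}$ \cite[Section 8.1]{Joy}. The tangent complex of $\underline{\mathbfcal M}(v_n)$ at $K$ is $\operatorname{RHom}(K,K)[1]$. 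We compute $\operatorname{RHom}(K,K)$ by applying $\operatorname{RHom}(-,K)$ and $\operatorname{RHom}(K,-)$ to the triangle $K\to E\to V\otimes\mathcal O(-n)[1]$ (up to our sign convention). The resulting difference from the pair tangent complex is built out of the two terms
\[
\operatorname{RHom}(\mathcal O(-n),\mathcal O(-n))=\mathbb C \quad\text{and}\quad \operatorname{RHom}(E,\mathcal O(-n)[i]).
\]
The first contributes the $B\mathbb G_m$ direction. The second vanishes by Serre duality, since $\operatorname{Ext}^{j}(E,\mathcal O(-n))=H^{3-j}(E(n)\otimes K_X)^*$ and the relevant degrees die for $n\gg0$ by the uniform Serre vanishing across the bounded family of semistable sheaves, as in \cite[p.~7]{FT}. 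Thus $d\Phi$ is a quasi-isomorphism at every point, and the criterion applies.

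The main obstacle is the last step. We must get exact cancellation of the extra terms, including the $\mathbb C$ from $\operatorname{End}(\mathcal O(-n))$ that is responsible for the $B\mathbb G_m$ factor. We must also verify that the needed vanishings, namely $H^{>0}(E(n)\otimes K_X)$ together with the $H^0$ condition that makes the pair map $V\to H^0(E(n))$ correspond to extension data, hold uniformly in $n$. Our first attempt will be to reduce everything to $\operatorname{Ext}^{\ge1}(\mathcal O(-n),E)=H^{\ge1}(E(n))=0$ and $\operatorname{Ext}^{\le 2}(E,\mathcal O(-n))=0$ for $n$ large, which is exactly the input FT use in the classical proof.
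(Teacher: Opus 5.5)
\textbf{The gap: the cotangent comparison fails in degree $3$.} Your criterion (isomorphism on classical truncations plus a quasi-isomorphism of cotangent complexes) is fine. The decisive step fails, though, and the failure is not technical. You claim that the terms $\operatorname{RHom}(E,\mathcal O(-n)[i])$ vanish by Serre duality. For $n\gg0$, Serre vanishing only kills $H^{>0}(E\otimes K_X(n))$, i.e.\ $\operatorname{Ext}^{\le 2}(E,\mathcal O(-n))$. It makes $\operatorname{Ext}^3(E,\mathcal O(-n))\cong H^0(E\otimes K_X(n))^\vee$ large and nonzero, and your own list of required vanishings quietly omits this degree.

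Concretely, view $K$ as the twisted complex $\{\mathcal O(-n)\to E\}$. The filtration-preserving part of $\operatorname{RHom}(K,K)$ is the pair complex $\operatorname{RHom}_{\acute{\mathcal A}}(\rho,\rho)$, and the quotient is $\operatorname{RHom}(E,\mathcal O(-n))[1]$. The summand $\operatorname{RHom}(\mathcal O(-n),\mathcal O(-n))=\mathbb C$ matches $\operatorname{End}(V)$ because $H^{>0}(\mathcal O_X)=0$. So it is not a discrepancy term and has nothing to do with the $B\mathbb G_m$ factor. Using $\operatorname{Ext}^{\le 2}(E,\mathcal O(-n))=0$ and $\operatorname{Ext}^3_{\acute{\mathcal A}}(\rho,\rho)=0$ (which follows from $\operatorname{Ext}^3(E,E)=0=H^2(E(n))$), you get $\operatorname{Hom}(K,K)\cong\operatorname{Hom}_{\acute{\mathcal A}}(\rho,\rho)$, $\operatorname{Ext}^1(K,K)\cong\operatorname{Ext}^1_{\acute{\mathcal A}}(\rho,\rho)$, and
\[
0\to\operatorname{Ext}^2_{\acute{\mathcal A}}(\rho,\rho)\to\operatorname{Ext}^2(K,K)\to\operatorname{Ext}^3(E,\mathcal O(-n))\to 0 .
\]
Thus $d\Phi$ is an isomorphism on $h^{-1}$ and $h^0$ of the tangent complexes, which is all the classical statement sees. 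It is not an isomorphism on $h^1$: the obstruction spaces differ by $H^0(E\otimes K_X(n))^\vee$, and the virtual dimensions differ by $\chi(E\otimes K_X(n))\neq 0$. This is the same discrepancy as between the pair and the complex obstruction theories in Pandharipande--Thomas theory, and no choice of $n\gg0$ removes it.

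\textbf{Comparison with the paper.} The paper's argument consists only of citing \cite[Theorem B.2]{FT} together with $\underline{\mathbfcal P}_n^{(b,w)}(\mathbf v)=\mathbfcal P_n^{(b,w)}(\mathbf v)\times B\mathbb G_m$. Your paragraph on classical truncations and automorphism groups reproduces exactly that content. The derived upgrade you attempt is precisely what the citation does not supply: Feyzbakhsh--Thomas work on a Calabi--Yau threefold with Behrend-weighted counts, which only see the underlying moduli spaces.

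Your computation, once corrected, shows this upgrade cannot be supplied. Suppose $\underline{\mathbfcal P}$ carries Joyce's pair derived structure, the one for which $\epsilon_{\mathbfcal P_n}=\mathcal O_{\mathbfcal P_n}$ in Proposition~\ref{pairs}. Then $\Phi$ is not \'etale, so it is not an isomorphism of derived stacks. What survives is the isomorphism of classical truncations. To transfer $K$-theoretic integrals from $\mathbfcal P_n$ to $\mathbfcal M_{(b,w)}(v_n)$, you would have to account for the extra obstruction bundle with fibres $\operatorname{Ext}^3(E,\mathcal O(-n))$, rather than appeal to a derived isomorphism.
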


We would like to emphasize that this is the place where one \textit{essentially} uses $\operatorname{rank} 2$-condition. For other ranks, the similar proposition does not hold anymore.

Now, we can describe the $\operatorname{rank} 2$ $\to$ $\operatorname{rank} 1$--wall-crossing as follows.

Let $\kappa$ be a descendant on $\mathcal M_{Gies}(\mathbf w)$. In~\ref{Giesbw}, we reduced the calculation of the integral $\chi(\mathbfcal M_{Gies}(\mathbf v), \kappa \otimes \epsilon)$ to the integral $\chi(\mathbfcal M_{(b, w)}(\mathbf v), \tau \otimes \epsilon)$ for some class $\mathbf v$ of the same rank as $\mathbf w$, and for a canonically defined descendant $\tau$.

For this integral, we use the result of Proposition~\ref{pairs}, and obtain a universal expression in terms of descendants on \begin{enumerate} \item  products of $\operatorname{rank} 1$-tilt-semistable sheaves, and of \item  the moduli space $\mathcal P_n^{(b, w)}(\mathbf v)$. \end{enumerate} 

Note that ~\cite[Appendix B]{FT} proves that stable pairs for the sheaves of class $\mathbf v$ in the sense of tilt-stability, and of $(b, w)$-stability coincide.

The (1)-factors are dealt with analogously to Subsection~\ref{Giesbw} (in fact, easier since their semistable objects are automatically stable).

Let us deal with (2).

Using the map $\Phi$, we rewrite this as $\chi(\mathbfcal M_{(b, w)}(v_n), \psi \otimes \epsilon)$ where $\psi$ is some descendant: analogously to Proposition~\ref{pairs}, one sees that the map $\Phi$ maps descendants to descendants.

Now we are going to wall-cross from $(b, w)$ for $v_n$ to the large volume limit. After that, the goal is to wall-cross back to Gieseker stability.

\subsubsection{The wall-crossing to large volume}

We start with $(b, w)$ as above and go upwards keeping $b < \frac{n}{r-1}$.

Each time we come to a wall of instability for $v_n$, we consider three moduli stacks $\mathcal M_-$, $\mathcal M$, $\mathcal M_+$ of $v_n$-semistable objects directly below the wall, on the wall, and above the wall.

By writing down the wall-crossing from Section~\ref{KM} for the pairs $(\mathcal M, \mathcal M_+)$ and $(\mathcal M, \mathcal M_-)$, we get the formula of the form

$$\chi(\mathbfcal M_-, \psi \otimes \epsilon) = \chi(\mathbfcal M_+, \psi \otimes \epsilon) + \mathcal U$$
where $\mathcal U$ is some expression in terms of the descendants on the products of moduli spaces (which correspond to $\pm$-HN-factors on the wall, as usual).

After performing this process, we get the expression of the form

$$\chi(\mathbfcal M_{(b, w)}(v_n), \psi \otimes \epsilon) = \chi(\mathbfcal M^{tilt}(v_n), \psi \otimes \epsilon) + \sum_i U_{\alpha}\chi(\prod_j \mathbfcal M_{(b_i, w_i)}(\mathbf v_i^j), \psi_i \otimes [\boxtimes \epsilon_j])$$

where $\psi_i$ are some descendants and $U_{\alpha}$ are universal coefficients.

The proof of~\cite[Section 5]{FT} applies here as well and shows that at each wall $(b', w')$ in this wall-crossing process, all $\mathbfcal M_{(b_i, w_i)}(\mathbf v_i^j)$ are moduli spaces consisting of sheaves (that is, not of complexes) either of 
\begin{enumerate}
\item positive classes of $\operatorname{rank} < 2$ in their safe areas, or

\item classes of the form $u_n$ of $\operatorname{rank} = 0$, for some $u$, so that $(b', w')$ lies outside of their Joyce-Song walls.
\end{enumerate}

The moduli spaces of the form (2) are, as in~\cite{FT0}, either empty, or already consist of Gieseker stable rank $0$ pure dimension two sheaves.

Thus, we start performing the same procedure for all products above factorwise using Proposition~\ref{prods}. That is, for each factor of type (1), we are moving upwards throughout its safe area, and each time we consider the $\Theta$-stratification on the product of moduli spaces under consideration induced by the one on this factor.

As in \cite{FT}, the procedure will be finite, though the new products will perhaps arise. Indeed, the reason why the procedure will be finite, is that $\operatorname{ch}_1^{bH}.H^2$ is always greater than $0$, see~\cite[Remark 1.2]{FT}.

The material of Section~\ref{FTsec} is sufficient to conclude that we get in the end the expression of the form

$$\chi(\mathbfcal M_{(b, w)}(\mathbf v), \psi \otimes \epsilon) = \chi(\mathbfcal M^{tilt}(v_n), \psi \otimes \epsilon) + \sum_i V_{\alpha}\chi(\prod_j \mathbfcal M^{tilt}(\mathbf v_i^j), \psi_i\otimes [\boxtimes \epsilon_j])$$

where $\mathbf v_i^j$ are either $\operatorname{rank} 1$ classes, or correspond to the $\operatorname{rank} 0$-sheaves of pure dimension $2$, and $V_{\alpha}$ are universal coefficients.

We apply now factorwise the procedure from Subsection~\ref{sec: stabcond} to obtain the formula 
$$\chi(\mathbfcal M_{(b, w)}(\mathbf v), \psi \otimes \epsilon) = \chi(\mathbfcal M^{Gies}(v_n), \psi \otimes \epsilon) + \sum_i W_{\alpha}\chi(\prod_j \mathbfcal M^{Gies}(\mathbf v_i^j), \psi_i \otimes [\boxtimes \epsilon_j])$$
for various $\mathbf v_i^j$ as above and universal coefficients $W_{\alpha}$.

The only thing to be explained now is how to express $\operatorname{rank} 1$-contributions through the $\operatorname{rank} 0$-ones.

\subsubsection{The $\operatorname{rank} 1 \to \operatorname{rank} 0$ step.}

Here one employs the procedure completely analogous to what was said in Subsection~\ref{rk2wc}. That is, one uses the fact that the map of taking cokernel, $${\underline{\mathcal P}}_n^{tilt}(\mathbf v) \to {\underline{\mathcal M}}_{(b, w)}(v_n),$$ for a $\operatorname{rank} 1$-class $\mathbf v$, is an isomorphism for $(b, w)$ just above the Joyce-Song wall.

Moreover, the argument is even easier. Indeed, for the $\operatorname{rank} 0$-sheaves of class $v_n$ there are no other walls, and they are all (\cite{FT0})  Gieseker stable rank $0$ pure dimension two sheaves.

\subsection{The Joyce-Song wall in general}\label{JSw}
This is the central subsection of the present paper. It is designed to prove Theorem~\ref{Kthmain}.

Since the analogue of Proposition~\ref{cokernel} is no more available (see~\cite[Theorem A.2]{FT} for an approximation), one has to try to repeat the original argument of~\cite{FT}.

The algorithm is as before: first, to start with Gieseker for the class $\mathbf v$, and move right above the Joyce-Song wall. This is as above, and is independent of rank.

Now we write the wall-crossing of Section~\ref{KM} for $l_{JS}$.

We consider the stability condition $(b, w)$ on $l_{JS}$, and the stability conditions $(b^+, w^+)$ and $(b^-, w^-)$ immediately above it and below it, and the corresponding moduli stacks $\mathcal M^{\pm, 0}(v) := \mathcal M^{ss}_{b^{\pm, 0}, w^{\pm, 0}}(v)$. Using~\cite[Theorem 3.4(2)]{FT} for any descendant $\tau$ on $\mathbfcal M_{(b, w)}(v_n)$, we get:

\begin{align}\label{rk2FT}\chi(\mathbfcal M^+(v_n), \tau \otimes \epsilon)  = \chi(\mathbfcal M^-(v_n), \tau \otimes \epsilon) + \chi(\mathbfcal  M^0(\mathbf v), \tau' \otimes \epsilon) + \mathcal U\end{align}

where $\mathcal U$ is some expression in terms of the descendants on the products of the lower rank moduli spaces.

More precisely, according to Section~\ref{KM} (see also the proofs in~\cite{KM}), the terms of this difference correspond to the following types $\tau_{\pm}$-HN-strata on ${\underline{\mathbfcal M}}^0(v_n)$:

\begin{enumerate}
\item The first term on the RHS is evident.

\item The second term on the RHS of equation (\ref{rk2FT}) is for the Joyce-Song stratum (whose center $\mathcal M^+(\mathbf v) \times B\mathbb G_m$ consists of the objects of the form $\mathcal F \oplus O(-n)[1]$ where $\mathcal F$ is $(b^+, w^+)$-stable of class $\mathbf v$); $E_+, E_-$ are the corresponding complexes of the form (\ref{Ea}) above/below the wall, respectively. Then, 
\begin{itemize}
\item $\alpha$) the universal wall-crossing coefficient $\widetilde{U}$ from Section~\ref{KM} corresponding to this term is equal to $\pm 1$: this is a result of the fact that, according to~\cite{KM}, our wall-crossing coefficients are the same as in motivic wall-crossing, and the calculation in~\cite[Eqns. (71)-(72)]{FT}.

\item $\beta$) it follows from our discussion that, modulo this sign, the precise recipe for calculating $\tau'$ is as follows: it is a descendant so that $$\chi(\mathbfcal M(\mathbf v), \tau' \otimes \epsilon_{\mathbf v}) = \chi(\mathbfcal M(v_n), \tau \otimes [\epsilon_{\mathbf v}, \epsilon_{[\mathcal O(-n)[1]]}]).$$ The last formula makes sense since $\mathbfcal M(\mathcal O(-n)[1])$ consists of a point, so a descendant on $\mathbfcal M(\mathbf v) \times \mathbfcal M(\mathcal O(-n)[1])$ can be interpreted as a descendant on $\mathbfcal M(\mathbf v).$
\end{itemize}

\item Other $HN$-strata correspond to the products of the form

\begin{equation}\label{centerJS}\left(\prod {\underline{\mathbfcal M}}(\mathbf v_i)\right) \times \left(\prod {\underline{\mathbfcal M}}(\mathbf w_i)\right) \times B\mathbb G_m^k\end{equation}

by the results of Subsection~\ref{FT32}: here $b \leq \frac{n}{r-1}$, and
\begin{itemize}
\item i) $\mathbf v_i$ are positive classes of rank $<r$ in their safe areas;

\item ii) $\mathbf w_i$ are the classes of the form $u_n:=u-[\mathcal O(-n)]$ (for some $u$'s) of rank $\leq \operatorname{rk}(\mathbf v) - 2$;

\item iii) $B\mathbb G_m$ factors correspond to the $\mathcal O(-n)[1]$ summands.
\end{itemize}
\end{enumerate}

\subsubsection{Two goals.}
The discussion above shows that we now have two goals: 

1. To explain that the ``lower rank''-terms in (1), (2), (3).i), (3).ii), and (3).iii) above can indeed be wall-crossed to descendant integrals over moduli spaces of lower rank Gieseker sheaves.

2. To prove that the calculation of any descendant integral on $\mathcal M(\mathbf v)$ amounts to a calculation of one corresponding to a descendant of the form $\tau'$ for some $\tau.$

\subsubsection{Lower rank wall-crossing} We  start with the first of these two goals, and are doing factor-wise wall-crossing as in the $\operatorname{rank} 2$-case.

\begin{enumerate} \item The (3).iii)-factors do not contribute at all (they become points after rigidification). 

\item The (3).i)-factors are dealt with as in the $\operatorname{rank} 2$-case, that is, just wall-crossed to the large volume (see~\cite[Proposition 5.1]{FT}) inductively.

\item The less trivial part is (3).ii) and (2), which both concern classes of the form $u_n$ with $n$ large enough \textit{relatively to $u$}, as in~\cite{FT}. There are two possibilities in the case of each of these classes: we are either above its Joyce-Song wall, or below it. 

\begin{itemize} \item In the first case, we wall-cross to the large volume staying away from JS-wall, while keeping $b \leq \frac{n}{r-1}.$

\item In the second case, we wall-cross \textit{downwards} similarly. The argument in \cite{FT} explains that this induction goes through. The reason is that, staying outside of the JS-wall, we will meet only HN-factors of types (3).i), (3).ii), (3).iii), and of lower rank.

(In the low volume chamber the invariants are, as in~\cite{FT}, zero.)
\end{itemize}

This is completely analogous to~\cite[Eqn. (77)]{FT}.
\end{enumerate}
\subsubsection{Descendants }\label{goal2} For our second goal, we have to study the operation $\tau \to \tau'$ described above.

First of all,  since all the moduli spaces involved at this stage are moduli spaces of sheaves (and not \textit{complexes}), the results of~\cite{Joy} endow them with \textit{cohomological} fundamental classes, as we explain in~\cite{KM}.

Thus, the results of Section~\ref{KM} and Section~\ref{cohtrans} below explain that any integral $\chi(\mathcal M(\mathbf v), \kappa \otimes \epsilon)$ over a moduli space of tilt-semistable sheaves can be reduced to the calculation of $\int_{[\mathcal M]^{vir}}\upsilon(\kappa)$ where $[\mathcal M]^{vir}$ is Joyce's virtual fundamental class in \textit{homology}, and $\upsilon(\kappa)$ is some cohomological descendant which depends on $\kappa$ formally. The same is true \textit{vice versa}.

Moreover, as it is clear from Appendix \ref{Appendix}, the operation $\tau \mapsto \tau'$ has a natural cohomological counterpart which we will denote $\psi \to \psi'$, by abuse of notation. It is clear that $\upsilon(\kappa)' = \upsilon(\kappa').$

Thus, it would be enough to prove that any cohomological descendant on $\mathcal M(\mathbf v)$ can be written down as $\upsilon'$ for some $\upsilon$. This is done in Appendix \ref{Appendix}.

\subsection{The end of the proof}\label{sec: arbcl}

We explain how to remove the assumption $\operatorname{ch}_1.H^2 = 0$ from Subsection~\ref{Subsecalg} following~\cite[Section 5]{FT}.

To do this, the authors of \cite{FT} replace $\operatorname{ch}$ by $\operatorname{ch}^{tH} = e^{-tH}\operatorname{ch}$, and argue that, for $b^t = b - t$, and $w^t = w - bt + \frac{1}{2}t^2$, $\nu_{b, w}$-stability for the given Chern character $\operatorname{ch}$ is equivalent to $\nu_{b^t, w^t}$ for $\operatorname{ch}^{tH}$. Moreover, for $t = \operatorname{ch}_1(\mathbf v).H^2/rH^3$, $$\operatorname{ch}_1^{tH}.H^2 = 0.$$

Then, they just replace $\operatorname{ch}$ in their proof by $\operatorname{ch}^{tH}.$

Now, note that twisting by $e^{\pm tH}$ preserves positivity (in the sense of Theorem~\ref{Extv}), and that $w^t > \frac{(b^t)^2}{2}$ is equivalent to $w > \frac{b^2}{2}$ so that there are no issues with applying Theorem~\ref{Extv}.

Thus, we can apply the described method in our context as well.

\section{Transition to cohomology}\label{cohtrans}

\subsection{}
The next theorem uses the notation of Subsection~\ref{KThdescend}.  We suppose that $\mathcal M :=\mathbfcal M(v) $ is the derived rigidified moduli stack of objects in some abelian subcategory of the derived category of coherent sheaves that admits Joyce's virtual classes, and, in particular, framing functors.

\begin{thm}\label{mainredcoh} a) Each integral of the form $\chi(\mathcal M, \tau \otimes \epsilon)$ can be written as a universal expression in terms of various $\int\limits_{[\mathcal M]^{vir}} \psi =: I(\psi)$ for various $\operatorname{weight} 0$-cohomological descendants $\psi$.

b) Each integral of the form $I(\psi)$ for any cohomological $\operatorname{weight} 0$-descendant $\psi$ can be written as a universal expression in terms of $\chi(\mathcal M, \tau \otimes  \epsilon)$.
\end{thm}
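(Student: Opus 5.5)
The proof rests on the comparison formula of Proposition b) in Subsection~\ref{KM}:
\[
\chi(\mathcal M,\tau\otimes\epsilon)=\int_{[\mathcal M]^{vir}}\operatorname{ch}(\tau)\operatorname{Td}(\mathbb T^{vir}_{\mathcal M}).
\]
This holds precisely because $\mathcal M$ admits Joyce's classes. The strategy is to show that both sides of this identity live in the same algebra of weight $0$ cohomological descendants, and that the passage $\tau\mapsto\operatorname{ch}(\tau)\operatorname{Td}(\mathbb T^{vir})$ can be inverted modulo universal expressions.

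\textbf{Part a).} I would compute $\operatorname{ch}(\tau)$ for a $K$-theoretic descendant $\tau$ of the form (\ref{descform}), viewed on a single factor $\mathcal M$ with several copies of $X$. By Grothendieck--Riemann--Roch for the projection $\pi_1$, we have $\operatorname{ch}(\pi_{1*}G)=\pi_{1*}(\operatorname{ch}(G)\operatorname{Td}(X^k))$. The Chern character of a Schur functor of $\mathbfcal U$, $\mathbfcal U^\vee$ or $\operatorname{Hom}(\mathbfcal U,\mathbfcal U)$ is a universal polynomial in the Adams operations $\psi^m$ applied to these. In turn, $\operatorname{ch}(\psi^m\mathbfcal U)=\sum_i m^i\operatorname{ch}_i(\mathbfcal U)$.

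Applying the K\"unneth decomposition of the diagonal classes on $X\times X$ to the $\operatorname{Hom}$-terms (which involve $\mathbfcal U^\vee\boxtimes\mathbfcal U$ pushed along $X$) writes everything as a polynomial in the classes $\operatorname{ch}_k(\gamma)$ of (\ref{cohdesc}). The same applies to $\operatorname{Td}(\mathbb T^{vir})$: its fibre is $\operatorname{RHom}(E,E)[1]$, so $\mathbb T^{vir}=-\pi_{1*}(\mathbfcal U^\vee\otimes\mathbfcal U)$ up to the trivial summand removed by rigidification. GRR together with K\"unneth therefore expresses $\operatorname{ch}(\mathbb T^{vir})$, and hence $\operatorname{Td}$, as a polynomial in the $\operatorname{ch}_k(\gamma)$. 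The class $\mathbb T^{vir}$ has weight $0$, and $\tau$ descends by Convention 2, so the resulting integrand is weight $0$. Integrating against $[\mathcal M]^{vir}$ gives a).

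\textbf{Part b).} This is the main obstacle, since we must reach every weight $0$ polynomial $\psi$. First, $\operatorname{Td}(\mathbb T^{vir})$ is invertible and is itself a descendant class, so it suffices to realize $\psi\operatorname{Td}(\mathbb T^{vir})^{-1}$, again a weight $0$ descendant, as $\operatorname{ch}(\tau)$. Since integrals only see finitely many degrees, we may truncate.

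Next I would use Adams operations to separate degrees. For $\tau_m=\pi_{1*}(\psi^m(\mathbfcal U)\otimes\pi_2^*\upsilon)$, the Chern character equals $\sum_i m^i\,\pi_{1*}(\operatorname{ch}_i(\mathcal U)\operatorname{ch}(\upsilon)\operatorname{Td}_X)$. A Vandermonde inversion over finitely many $m$ isolates each graded piece. Since $\psi^m$ is a polynomial in $\lambda$-operations, i.e. Schur functors, these $\tau_m$ are descendants. Taking $\upsilon$ to range over $K(X)$, which maps onto $H^*(X)$ because $X$ is of class $D$, we recover every $\operatorname{ch}_i(\gamma)$. Products of descendants are descendants, so we obtain all polynomials in the $\operatorname{ch}_i(\gamma)$.

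The delicate point is the weight $0$ condition on $\underline{\mathcal M}$ versus $\mathcal M$. A single $\operatorname{ch}_i(\gamma)$ does not descend, whereas a weight $0$ polynomial does, and a product of non-descending $K$-theoretic descendants must itself descend. I would handle this by using, instead of powers of $\mathbfcal U$, the multi-copy $\operatorname{Hom}(\mathbfcal U^j,\mathbfcal U^{j'})$-type expressions and tensors $\mathbfcal U^{a}\otimes\mathbfcal U^{c\vee}$ balancing the $\mathbb G_m$-weight. I would then check, using the normalization of \cite[Section 2.4]{BLM} (e.g.\ by fixing $\operatorname{ch}_1$ of a chosen normalization or using the derivation $\sum\operatorname{ch}_{k-1}\partial/\partial\operatorname{ch}_k$ whose kernel is the weight $0$ part), that the weight $0$ subalgebra is spanned by the Chern characters of such balanced expressions. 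Adams operations commute with this weight grading, so the Vandermonde argument runs inside weight $0$.

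Finally, both directions are universal, since all coefficients come from GRR, K\"unneth and Vandermonde, independent of $\mathcal M$. This gives b).
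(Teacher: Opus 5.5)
Your argument is essentially the paper's. In a) you use the comparison formula $\chi(\mathcal M,\tau\otimes\epsilon)=\int_{[\mathcal M]^{vir}}\operatorname{ch}(\tau)\operatorname{Td}(\mathbb T^{vir})$, GRR along $\pi_1$, and the diagonal/K\"unneth splitting. In b) you invert $\operatorname{Td}(\mathbb T^{vir})$, truncate, separate degrees with Adams operations and a Vandermonde determinant, absorb $\operatorname{Td}_X$ into the insertion, and handle products with several copies of $X$.

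There are two minor differences. For the $\operatorname{Hom}$-factors you use GRR plus the K\"unneth decomposition of the diagonal in cohomology (the paper does this only for $\mathbb T^{vir}$) instead of Gross's $K$-theoretic formula, so class $D$ plays no role in your a). Also, surjectivity of $\operatorname{ch}\colon K(X)\to H^*(X)$ holds because $K(X)$ is topological $K$-theory with $\mathbb C$-coefficients, not because of class $D$.

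You go beyond the paper on the weight $0$ point. The paper only says it suffices to argue on $\underline{\mathcal M}$. Its classes built from $\psi^N(\mathcal U)$ have $B\mathbb G_m$-weight $N>0$, so they do not descend and cannot be paired with $\epsilon$ on $\mathcal M$. You are right that balanced expressions are needed.

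However, your fix is only announced, and as literally described it is too small. With pairs $\mathcal U^a\otimes\mathcal U^{c\vee}$, $\operatorname{Hom}$-terms and a single global $\psi^m$, you formally get only products of the classes $\sum_{i+j=k}(-1)^j\operatorname{ch}_i(\gamma)\operatorname{ch}_j(\delta)$. These all have even polynomial degree. Writing $\mathcal C_s(\gamma)=\sum_k s^k\operatorname{ch}_k(\gamma)$, the $s^k$-coefficient of $\mathcal C_s(\gamma_1)\mathcal C_s(\gamma_2)\mathcal C_{-2s}(\gamma_3)$ is a weight $0$ descendant of polynomial degree three, so it is missed.

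The step closes by applying Adams operations factorwise:
\begin{enumerate}
\item Take integers $a_1,\dots,a_n$ with $\sum a_i=0$. Apply $\psi^{m|a_i|}$ to the universal sheaf on the $i$-th copy of $X$, or to its dual if $a_i<0$. Tensor with $\boxtimes_i\upsilon_i$ and push forward.
\item This is a combination of Schur functors of total weight $\sum_i ma_i=0$, hence a descending descendant. With $\gamma_i=\operatorname{ch}(\upsilon_i)\operatorname{Td}_X$, its Chern character is $\prod_i\sum_k(ma_i)^k\operatorname{ch}_k(\gamma_i)$.
\item Vandermonde in $m$ isolates $\sum_{k_1+\dots+k_n=k}\prod_i a_i^{k_i}\operatorname{ch}_{k_i}(\gamma_i)$.
\item Under $\operatorname{ch}_k(\gamma_i)\mapsto t_i^k/k!$ this class becomes $(\sum_i a_it_i)^k/k!$. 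The derivation $\sum\operatorname{ch}_{k-1}\partial/\partial\operatorname{ch}_k$ becomes $\sum_i\partial/\partial t_i$. Its kernel is the algebra of polynomials in the differences $t_i-t_j$.
\item By polarization, the degree $k$ part of this kernel is spanned by $k$-th powers of integral linear forms with zero coefficient sum. Symmetrizing over slots with equal insertions then yields every weight $0$ descendant.
\end{enumerate}
With this added, your proof of b) is complete, and it handles descent, which the paper's proof does not address.
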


Before we start the proof, let us recall the definition of Adams operations that will be useful for b) below. \footnote{For the relevant reference, see~\cite[Subsection 3.5.2]{MS}.}

\begin{definition}\label{def: Adams} The \textit{Adams operations} are a sequence of operations $$\psi^N: K(\mathcal M) \to K(\mathcal M), \ N \in \mathbb N,$$ which are defined by the inductive formula

$$\psi^N - \psi^{N-1} \otimes \lambda^1 + \ldots + (-1)^{N-1}\psi^1 \otimes \lambda^{N-1} + (-1)^NN\lambda^N = 0$$

where $\lambda$ stands for the exterior power operation on $K(\mathcal M)$.
\end{definition}

\begin{remark}\label{rem: Adams}
 The operations $\psi^N$ satisfy the identities of the form

$$\operatorname{ch}_k(\psi^N(\mathcal U)) = N^k\operatorname{ch}_k(\mathcal U).$$

For schemes, this follows, e.g. from~\cite[Theorem 3.1]{KR}. For stacks, this follows from the material of~\cite[Section 5.2]{JoyceRingel} (the same reference is for the Chern character map on the perfect K-theory of algebraic stacks).
\end{remark}

\begin{proof}
a) 

\textbf{Step 1.} Here, we will use Section~\ref{KM}. What we obtain by applying it formally is 

$$\chi(\mathcal M, \tau \otimes \epsilon) = \int_{[\mathcal M]^{vir}} \operatorname{ch}(\tau) \cup \operatorname{td}(T_{\mathcal M}^{vir})$$

where $T_{\mathcal M}^{vir}$ is the virtual tangent complex of $\mathcal M$. We will also use the shorthand $T^{vir}$ for the virtual tangent complex of $\underline{\mathcal M}$.

Let $d$ stand for the virtual dimension of $\mathcal M$, that is, suppose that $[\mathcal M]^{vir} \in H_{2d}(\mathcal M)$. Note that the class $[\operatorname{Td}(T_{\mathcal M}^{vir})]^{\leq d}$ (that is, part of $\operatorname{Td}(T^{vir})$ in the cohomological degrees $\leq d$) can be written in terms of the cohomological descendants. This follows from the formula (see~\cite[Example 2.6]{BLM})

$$\operatorname{ch}(T^{vir}) = - \sum\limits_{i, j \geq 0} \sum_t (-1)^{i-p_L^t + \operatorname{dim}(X)}\pi_{1*}(\operatorname{ch}_i(\mathcal U)\cup \gamma_t^L)\pi_{1*}(\operatorname{ch}_j(\mathcal U)\cup \gamma_t^R)$$

for the K{\"u}nneth decomposition $$\Delta_*(\operatorname{td}(X)) = \sum \gamma_t^L \boxtimes \gamma_t^R,$$ $\gamma_t^i \in H^{(p_i^t, q_i^t)}(X), \  i \in \{L, R\}$, and the universal complex $\mathcal U$ over $\mathcal M \times X$.

\begin{remark} Here we work in the notation of Subsection~\ref{KThdescend}, and $\pi_{1*}$ stands for the \textit{cohomological} pushforward. We will denote the $K$-theoretic pushforward by $\pi_{1*}^K$ in the present proof.
\end{remark}

\textbf{Step 2.} We are reduced to showing that each $\operatorname{ch}_i(\tau)$ is a cohomological descendant on $\underline{\mathcal M}$. Suppose that $\tau = \pi_{1*}^K(\Psi \otimes \prod_i\pi_2^*\gamma_i)$ for the appropriate combination $\Psi$ of Schur functors as in~\ref{descform}. Then,

$$\operatorname{ch}_i(\tau) = \operatorname{ch}_i(\pi^K_{1*}(\Psi \otimes \prod_i \pi_{2}^*\gamma_i)) = [(\pi_{1*}(\operatorname{ch}(\Psi \otimes \prod_i\pi_2^*\gamma_i) \cup \operatorname{Td}(X^n)))]^i,$$

by the Grothendieck-Riemann-Roch formula.

Thus, we have to prove that $\pi_{1*}(\operatorname{ch}_i(\Psi) \cup \prod_i\pi_2^*\epsilon_i)$, $\epsilon_i = \operatorname{ch}(\gamma_i)$, is a cohomological descendant. For this, note, first of all, that the following lemma holds.

\begin{lemma}  $\pi_{1*}(\operatorname{ch}_i(\Psi) \cup \prod_i\pi_2^*\epsilon_i)$ has the form $\pi_{1*}(P(\{\operatorname{ch}_j(\mathcal U)\}_{j \in \mathbb N}) \cup \prod_i\pi_2^*\epsilon_i\})$ for some polynomial $P$.
\end{lemma}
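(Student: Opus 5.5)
The plan is to exploit that the Chern character is a ring homomorphism compatible with pullback, and that Schur functors are universal polynomials in exterior (or symmetric) powers. Recall that $\Psi$ is a tensor product of factors of the form $\mathbb S_\lambda(\operatorname{Hom}_X(\mathbfcal U_i^k,\mathbfcal U_j^k))$, $\mathbb S_\nu(\mathbfcal U_a^b)$ and $\mathbb S_\kappa(\mathbfcal U_c^{d\vee})$, pulled back to $\prod\underline{\mathcal M}_i\times X^n$. Since $\operatorname{ch}(\Psi)$ is the product of the Chern characters of these factors, it suffices to treat each factor separately and then take products. The product of polynomials in $\operatorname{ch}_j$ of the various universal sheaves is again such a polynomial.

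\textbf{Step 1: Schur functors.} For any perfect complex $V$, the class $\mathbb S_\lambda(V)$ is a universal integral polynomial in the $\lambda$-operations $\lambda^k(V)$, by the Jacobi--Trudi identity in $K$-theory. In turn, $\operatorname{ch}(\lambda^k V)$ is a universal polynomial in the components $\operatorname{ch}_j(V)$. One way to see this is the splitting principle. Another, closer to Definition \ref{def: Adams} and Remark \ref{rem: Adams}, is Newton's identities: they express $\lambda^k$ rationally through Adams operations $\psi^N$, and $\operatorname{ch}_j(\psi^N V)=N^j\operatorname{ch}_j(V)$. Hence $\operatorname{ch}(\mathbb S_\lambda V)=Q_\lambda(\{\operatorname{ch}_j(V)\})$ for a universal polynomial $Q_\lambda$. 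The only subtlety is the rank $\operatorname{ch}_0(V)$, which is a locally constant integer on each component. I would treat it as a constant, so that it can be absorbed into the coefficients.

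\textbf{Step 2: reduction to $\operatorname{ch}_j(\mathcal U)$.} Apply Step 1 with $V$ equal to $\mathcal U_a^b$, to $\mathcal U_c^{d\vee}$, or to $\operatorname{Hom}_X(\mathcal U_i^k,\mathcal U_j^k)$. For the dual, $\operatorname{ch}_j(V^\vee)=(-1)^j\operatorname{ch}_j(V)$. For the $\operatorname{Hom}$ term, write $\operatorname{Hom}_X(\mathcal U_i,\mathcal U_j)=\pi_{*}(\mathcal U_i^\vee\otimes\mathcal U_j)$ along the copy of $X$. Grothendieck--Riemann--Roch gives
$$\operatorname{ch}(\operatorname{Hom}_X(\mathcal U_i,\mathcal U_j))=\pi_*\bigl(\operatorname{ch}(\mathcal U_i^\vee)\operatorname{ch}(\mathcal U_j)\operatorname{td}(X)\bigr).$$
This is not literally a polynomial in the $\operatorname{ch}_j(\mathcal U)$ on the product. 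To handle it, I would use the Künneth decomposition of the diagonal, exactly as in the formula from \cite{BLM} quoted in Step 1 above. This rewrites the class as a finite sum of products of terms $\pi_{1*}(\operatorname{ch}_i(\mathcal U)\cup\gamma)$, that is, of pushforwards from an auxiliary copy of $X$. I would therefore interpret the lemma's $\pi_{1*}$ as allowing extra copies of $X$: this matches the multi-copy setup of Subsection~\ref{KThdescend}. The powers of these classes that appear in $Q_\lambda$ then live on $\prod\underline{\mathcal M}\times X^{n'}$ with more copies of $X$, and are of the required form.

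\textbf{Step 3: assembly.} Multiply the polynomials from Steps 1 and 2 and take the degree-$i$ part, which is still polynomial. The insertions $\pi_2^*\epsilon_i$ pass through unchanged. Finally, apply the projection formula to move everything under a single $\pi_{1*}$ from the enlarged product.

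I expect the main obstacle to be Step 2 for the $\operatorname{Hom}$ factors, since that is where Künneth is used. Künneth holds in cohomology with complex coefficients, and the needed identity is proved in \cite{BLM}, so I would simply invoke it. The remaining bookkeeping, keeping track of how many copies of $X$ are involved, is routine.
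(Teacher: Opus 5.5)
Your proposal is correct, and its core is the same as the paper's: write $\operatorname{ch}$ of each $\operatorname{Hom}_X(\mathcal U_i,\mathcal U_j)$ factor as a bilinear sum of descendants, then collapse products of pushforwards onto extra copies of $X$. The route differs in two ways.

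\textbf{Schur functors and duals.} The paper says nothing about the $\mathbb S_\nu(\mathcal U)$ and $\mathbb S_\kappa(\mathcal U^\vee)$ factors; it treats them as evident. You make this reduction explicit, via Jacobi--Trudi/Newton and $\operatorname{ch}_k(\psi^N V)=N^k\operatorname{ch}_k(V)$.

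\textbf{The $\operatorname{Hom}$ factors.} The paper quotes Gross's Proposition 5.2 (equivalently BLM, Theorem 4.7). That result gives
$\operatorname{ch}$ of the relative $\operatorname{Hom}$ complex as $\sum c_{v,w}\,\pi_{1*}(\operatorname{ch}_j\mathcal U\cup\pi_2^*v)\cup\pi_{1*}(\operatorname{ch}_k\mathcal U\cup\pi_2^*w)$, with $v,w$ running over a basis of $K^0(X^{an})_{\mathbb Q}\oplus K^1(X^{an})_{\mathbb Q}$. This is exactly where the paper invokes the class~D hypothesis, and hence Voineagu's theorem.

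You derive the same bilinear expression directly from Grothendieck--Riemann--Roch and the K\"unneth decomposition of $\Delta_*\operatorname{td}(X)$ in $H^*(X\times X)$. This is the mechanism of the BLM formula for $\operatorname{ch}(T^{vir})$ that the paper itself quotes in Step~1. That decomposition exists for every smooth projective $X$, and the paper already uses GRR on these stacks. So your route does not need class~D at this step, and it is more self-contained.

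\textbf{What the paper's phrasing buys.} Staying in Gross's topological $K$-theory language matches the Appendix. There, the identification of Joyce's vertex algebra with $\mathbf V(K(X))$ genuinely uses $K(X)=K^{sst}(X)$.

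\textbf{A point to carry through.} You correctly note that the lemma must be read as allowing extra copies of $X$ and the new K\"unneth insertions, collapsed via the Cartesian diagram of Step~3. Just keep track of the Koszul signs from odd cohomology, which is nonzero for many Fano threefolds.
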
 

\begin{proof} Our main concern here is the factor corresponding to the $\operatorname{Ext}(-, -)$-factors in the formula~\ref{descform}. However, the way of its treatment is clear from the formula of ~\cite[Proposition 5.2]{Gross}, (see also~\cite[Theorem 4.7]{BLM}) which says, in particular, that, over $\underline{\mathcal M}\times  X \times X$,

\begin{align}\operatorname{ch}(\pi_{1*}\operatorname{Hom}(\pi_{1, 2}^*\mathcal U_1, \pi_{1, 3}^*\mathcal U_2)) \\= \sum_{j, k \geq 0, \  \  v, w \in Q} c_{v, w}\pi_{1*}(\operatorname{ch}_j\mathcal U \cup \pi_2^*v) \cup \pi_{1*}(\operatorname{ch}_k \mathcal U \cup \pi_2^*w)\end{align}

which is considered as an equality in the topological $K$-theory with the abuse of notation $X = X^{an}$, for some basis $Q$ in $K^0(X^{an})_{\mathbb Q} \oplus K^1(X^{an})_{\mathbb Q},$ and for constants $c_{v, w}.$

\begin{remark} Here, and only here, in the present paper it is important that $X$ is \textit{in class $D$} (since Gross assumes it in his Proposition 5.2): this means that the semi-topological $K$-theory of $X$ in the sense of Friedlander-Walker coincides with its topological $K$-theory. However, it turns out that this condition \textit{always} holds for Fano varieties by a result of Voineagu, that is, by the main theorem of \cite{Voi}.
\end{remark}
\end{proof}

Thus, to finish the proof of a), we just have to show that each class of the form $$\pi_{1*}((\prod_{k=1}^{n} \operatorname{ch}_{i_k}(\mathcal U^{j_k})) \cup \prod_{i=1}^m\pi_2^{i,*}\epsilon_i)$$ can be rewritten as $P(\{\operatorname{ch}_i(\kappa_j)\}_{j \in J})$ for some set $\kappa_j$ of cohomology classes on $X$.

\textbf{Step 3.} To prove the latter statement, we illustrate it in the simplest possible situation, and the general case follows analogously.

Suppose that we are dealing, on $\underline{\mathcal M}\times X \times X$ with $$\tau := \pi_{1*}(\operatorname{ch}_i(\mathcal U_1)\cup \operatorname{ch}_j(\mathcal U_1) \cup \pi_2^{1*}\gamma\cup \pi_2^{2*}\psi).$$

Then, using $\underline{\mathcal M} \times \underline{\mathcal M} \times X \times X$ and the projection formula, $\tau$ can be written as  

$$\pi_{1*}(\operatorname{ch}_i(\mathcal U) \cup \pi_2^{*}\gamma) \cup \pi_{1*}(\operatorname{ch}_i(\mathcal U) \cup \pi_2^{*}\psi).$$

This uses the Cartesian diagram 

\[\begin{tikzcd}
	{\underline{\mathcal M} \times X^2} && {(\underline{\mathcal M} \times X)^2} \\
	{\underline{\mathcal M}} && {\underline{\mathcal M}^2.}
	\arrow["{\Delta_{\underline{\mathcal M}} \times \operatorname{id}}", from=1-1, to=1-3]
	\arrow[from=1-1, to=2-1]
	\arrow[from=1-3, to=2-3]
	\arrow["{\Delta_{\underline{\mathcal M}}}", from=2-1, to=2-3]
\end{tikzcd}\]

We are done with a).

b)  What we will be using is the following statement which will be proved in Step 2 below:

\textbf{Claim.} \textit{Let $d$ be the complex virtual dimension of $\mathcal M$ (so that $[\mathcal M]^{vir} \in H_{d}(\mathcal M)$). Then, for any cohomological descendant $D$, there exists a $K$-theoretic descendant $\tau$ so that $\operatorname{ch}(\tau) = D \operatorname{mod} H^{> d}(\mathcal M)$.}

\textbf{Step 1.} Let us, first of all, deduce the statement of b) from the claim. We have to show that, for any cohomological descendant $D$, there exists a $K$-theoretic descendant $\tau$ so that $[\operatorname{ch}(\tau)\operatorname{td}(T^{vir})]^d = D$. Now, $\operatorname{Td}(T^{vir})$ is formally invertible, and can be rewritten in terms of the descendants, as it was discussed in the part a) above.

Thus, we are done using the claim for the cohomological descendant $D(\operatorname{Td}^{-1}(T^{vir})_{\leq d})$.

\textbf{Step 2a.} Now we are going to prove the Claim. It is enough to prove it on $\underline{\mathcal M}$ instead of $\mathcal M$ itself. 

Let us, first of all, show that for  the universal sheaf $\mathcal U$ over $\underline{\mathcal M} \times X$, $\operatorname{ch}_i(\mathcal U) \cup \pi_2^*(\gamma)$ can be written in terms of $\operatorname{ch}(\psi^N\mathcal U \otimes \pi_2^*(\Gamma))$ modulo $H_{>d}(\mathcal M \times X)$ for $\psi$ as in Definition~\ref{def: Adams}, and for various classes $\Gamma \in K(X)$.

First of all, $\gamma$ is irrelevant here, and we can set $\gamma = 1$ without loss of generality. Indeed, we recall that $K(X)$ stands here for Blanc's topological K-theory, so the Chern character map $K(X) \to H^*(X)$ is surjective.

Second, by the Remark~\ref{rem: Adams}, $\operatorname{ch}_i(\psi^N(\mathcal U)) = N^i\operatorname{ch}_i(\mathcal U)$. Hence, we can write the $(d +1) \times (d+1)$ matrix $A$ so that $A_{i, j} = \operatorname{ch}_j(\psi^i(\mathcal U))$, and, noting that the Vandermonde determinant is non-zero, we are done.

We now return to the class $\gamma$, and will write $\mathcal U_{i}(\gamma)$ for the resulting expression:
$$\operatorname{ch}_i(\mathcal U) \cup \pi_2^*(\gamma) = [\operatorname{ch}(\mathcal U_{i}(\gamma))]^i$$
where the superscript $i$ stands for the degree $i$-part of the given class.

\textbf{Step 2b.} We now prove the Claim for $$D = \pi_{1*}(\operatorname{ch}_i(\mathcal U) \cup \pi_2^*(\gamma)).$$

Let us consider $\pi^K_{1*}(\mathcal U_{i}(\gamma \cup \operatorname{Td}(X)^{-1}))$. Its Chern character is equal to $D$ by the Grothendieck-Riemann-Roch formula:
\begin{align} \operatorname{ch}(\pi^K_{1*}(\mathcal U_{i}(\gamma \cup \operatorname{Td}(X)^{-1}))) = \pi_{1*}(\operatorname{ch}_i(\mathcal U) \cup \pi_2^*(\gamma \cup \operatorname{Td}(X) \cup \operatorname{Td}(X)^{-1})) \\ = D
\end{align}

where all the equalities are $\operatorname{mod} H^{>d}(\underline{\mathcal M})$.

Finally, $\operatorname{Td}(X)$ lies in $\operatorname{Im}(\operatorname{ch}: K(X) \to H^*(X))$ since its graded components may be expressed in terms of $\operatorname{ch}_i(T_X)$ so one may repeat the above argument with the Adams operations.

\textbf{Step 2c.} For more complicated descendants $D$, that is, for $D$ of the form $\prod_{k=1}^n \operatorname{ch}_{i_k}(\gamma_k)$, one now argues similarly but using the commutative diagram above to deal with the multiplication, as in part a).

\end{proof}

To prove Theorem~\ref{mainthm}, and hence finish the present paper, it remains to record the following corollary.

\begin{corollary}\label{multiplefactors}
For any classes $\mathbf v_1,$ $\mathbf v_2$, ... $\mathbf v_n$ in $K(X)$ let $\mathcal M := \prod_{i=1}^n\mathcal M_i := \prod_{i=1}^n \mathcal M(\mathbf v_i)$.

Then, the descendant integrals  over $\mathbfcal M$, against  the product of $\epsilon$-classes, can be written as universal polynomials in terms of $\int_{[\mathcal M_i]^{vir}}\psi$ for various cohomological descendants $\psi$, and \textit{vice versa}.
\end{corollary}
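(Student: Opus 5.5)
The plan is to run the proof of Theorem~\ref{mainredcoh} on the product $\mathcal M=\prod_i\mathcal M_i$, with the factors kept separate throughout. The three ingredients are the exterior product structure on $\mathbb K$ from Proposition~\ref{prods}, the Künneth theorem in homology, and the Adams-operation argument of part b). The \emph{universal polynomial} in the conclusion will come from factoring cohomological integrals over $\prod_i[\mathcal M_i]^{vir}$ into products of integrals over the individual factors.

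\textbf{From $K$-theoretic to cohomological.} First I check that the compatibility of $\epsilon$-classes with Joyce's classes (part b) of the first Proposition of Section~\ref{KM}) is multiplicative under exterior products. This gives
\[
\chi\bigl(\mathcal M,\tau\otimes(\boxtimes\epsilon_i)\bigr)=\int_{\prod_i[\mathcal M_i]^{vir}}\operatorname{ch}(\tau)\operatorname{Td}\bigl(\boxplus_i T^{vir}_{\mathcal M_i}\bigr),
\]
and the Todd class factors as $\prod_i\pi_i^*\operatorname{Td}(T^{vir}_{\mathcal M_i})$. Step 1 of a) then writes each factor, truncated to degrees at most $d_i$, as a cohomological descendant on $\mathcal M_i$.

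Next, Step 2 and the Lemma, using Gross's formula (which is where class D enters, guaranteed by \cite{Voi}), write $\operatorname{ch}(\tau)$ as a pushforward from $\prod_i\underline{\mathcal M}_i\times X^k$ of a polynomial in the classes $\operatorname{ch}_j(\mathcal U_i^l)$ times pulled-back classes from $X^k$. I apply the Künneth decomposition on $X^k$ to the insertion, and the Cartesian-diagram argument of Step 3 to separate the copies of $X$. This turns the integrand into a finite sum of products $\prod_i\pi_i^*\psi_i$, where each $\psi_i$ is a cohomological descendant on $\underline{\mathcal M}_i$. The integral of such a product over the product class splits as $\prod_i\int_{[\mathcal M_i]^{vir}}\psi_i$.

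\textbf{Weight $0$.} One point needs care here: each $\psi_i$ must be of weight $0$, so that it descends to $\mathcal M_i$. I plan to handle this by grading by the $B\mathbb G_m^n$-weights, one weight per factor. The total descendant has weight $0$ in each factor separately, and the Künneth splitting can be chosen compatibly with this multigrading. Any summand whose $i$-th factor has nonzero weight is then either discarded (it integrates to zero) or recombined with others into weight-$0$ terms. This follows the argument of \cite[Section 2.4]{BLM}.

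\textbf{The converse.} For the \emph{vice versa} direction, take a product $\prod_i\int_{[\mathcal M_i]^{vir}}\psi_i$. By Theorem~\ref{mainredcoh}~b), each factor equals $\chi(\mathcal M_i,\tau_i\otimes\epsilon_i)$ for a suitable $K$-theoretic descendant $\tau_i$. The product of these is $\chi\bigl(\mathcal M,(\boxtimes\tau_i)\otimes(\boxtimes\epsilon_i)\bigr)$, using multiplicativity of the pairing for exterior products in $\mathbb K$, and $\boxtimes\tau_i$ is itself a descendant of the form (\ref{descform}).

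\textbf{Expected obstacle.} The hardest step will be justifying that the pairing of $\mathbb K$ with $K$-theory is multiplicative for exterior products. The difficulty is that Künneth fails in $K$-theory, so $\tau$ need not be a sum of exterior products. My approach is to pass through the Chern character before splitting: I will use that $\operatorname{ch}$ of the exterior product of $\epsilon$-classes corresponds to the cross product of Joyce classes, and then do all the factorization in cohomology. Multiplicativity is needed only for the classes themselves, not for $\tau$.
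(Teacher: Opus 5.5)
Your proposal is correct and is essentially the paper's argument. The paper proves the corollary only by declaring it fully analogous to Theorem~\ref{mainredcoh}, and you run exactly that proof on the product: GRR, Gross's formula, cohomological K\"unneth, the compatibility of $\boxtimes_i\epsilon_i$ with the cross product of Joyce classes, and the factorwise converse via part b). The one step to phrase more carefully is weight $0$: for cohomological descendants this means lying in the kernel of the derivations $R^{(i)}_{-1}$ of \cite{BLM} rather than in a weight-graded piece, so the splitting you need is the linear-algebra identity $\bigcap_i\ker R^{(i)}_{-1}=\bigotimes_i\ker R^{(i)}_{-1}$ on the tensor product of descendant algebras, not a discarding of nonzero-weight summands.
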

\begin{proof}
This is fully analogous to Theorem~\ref{mainredcoh}.
\end{proof}

\begin{remark} Note that, from this statement, together with Theorem~\ref{Kthmain}, Theorem~\ref{mainthm} immediately follows.
\end{remark}

\appendix

\section{Injectivity of vertex operators}\label{Appendix}
\begin{center}
  {\scshape Ivan Karpov and Miguel Moreira}
\end{center}
\medskip

 \subsection{A basis for the algebra of symmetric functions}

Let $\Lambda=\mathbb Q[p_1, p_2, \ldots]$ be the algebra of symmetric functions in infinitely many variables, where $p_i$ is regarded as the $i$-th power sum polynomial. The algebra $\Lambda$ has a natural grading given by $\deg(p_i)=i$; we denote by $\Lambda_d$ the degree $d$ part of $\Lambda$. The elements $p_i$, for $i\geq 1$ act on $\Lambda$ by multiplication. We define the action of $p_{-i}$ on $\Lambda$ by
\[p_{-i}=i\frac{\partial}{\partial p_i}\,,\quad i\geq 1\,.\]

Let us define the vertex operators $\{W_n\}_{n\in \mathbb Z}$ by
\[\sum_{n\in \mathbb Z}W_n z^n=\exp\left(\sum_{n>0}\frac{p_n}{n}z^{n}\right)\exp\left(\sum_{n<0}\frac{2p_n}{n}z^{n}\right)\,.\]
Note that the operator $W_n$ has degree $n$. Such operators arise naturally from the vertex algebra associated to the lattice $(\mathbb Z, 2)$. Given a partition $\lambda$
\[\lambda_1\geq \lambda_2\geq \ldots \lambda_\ell>0\]
of length $\ell=\ell(\lambda)$ and size $d=\lambda_1+\ldots+\lambda_l$ we define the symmetric function
\[t_\lambda=W_{\lambda_1}\ldots W_{\lambda_\ell}(1)\in \Lambda_d\,.\]
It is well-known (\cite{Jing}) that Schur functions are obtained if one modifies the above definition by removing the 2 from the definition of $W_n$.

\begin{proposition}\label{prop: basistlambda}
    The elements $t_\lambda$, with $\lambda$ ranging through the partitions of $d$, form a basis of $\Lambda_d$. 
\end{proposition}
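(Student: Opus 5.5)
The plan is to prove that the transition matrix from $\{t_\lambda\}_{\lambda\vdash d}$ to the basis $\{h_\lambda\}_{\lambda\vdash d}$ of complete homogeneous symmetric functions is unitriangular. For this we use the lexicographic order on partitions of $d$: $\mu>_{lex}\lambda$ if, at the first index where they differ, $\mu_i>\lambda_i$. Since $\{h_\lambda\}_{\lambda\vdash d}$ is a basis of $\Lambda_d$ and both families are indexed by partitions of $d$, unitriangularity immediately gives the proposition. Concretely, we aim to show
\[
t_\lambda \;=\; h_\lambda+\sum_{\mu\vdash d,\ \mu>_{lex}\lambda} c_{\lambda\mu}\,h_\mu .
\]

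\textbf{Step 1: normal form of $W_n$.} First we expand the generating series defining $W_n$. The creation factor $\exp\bigl(\sum_{m>0}p_m z^m/m\bigr)$ has $z^m$-coefficient equal to multiplication by $h_m$, with $h_0=1$ and $h_m=0$ for $m<0$. The annihilation factor $\exp\bigl(\sum_{n<0}2p_nz^n/n\bigr)$ has $z^{-k}$-coefficient a differential operator $D_k$ of degree $-k$, with $D_0=\mathrm{id}$ and $D_k(1)=0$ for $k>0$. Hence
\[
W_n f=\sum_{k\ge 0} h_{n+k}\,D_k f \qquad (f\in\Lambda).
\]
The sum is finite on homogeneous $f$, since $D_k f=0$ once $k>\deg f$.

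\textbf{Step 2: induction on the length $\ell(\lambda)$.} Write $\lambda'=(\lambda_2,\dots,\lambda_\ell)$, so that $t_\lambda=W_{\lambda_1}t_{\lambda'}$. By Step 1,
\[
t_\lambda=h_{\lambda_1}t_{\lambda'}+\sum_{k\ge1}h_{\lambda_1+k}D_k(t_{\lambda'}).
\]
We treat the two kinds of terms separately.

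\emph{The $k=0$ term.} By induction, $t_{\lambda'}=h_{\lambda'}+\sum_{\mu>_{lex}\lambda'}c\,h_\mu$. Multiplying by $h_{\lambda_1}$ gives $h_\lambda$ plus terms $h_{\mathrm{sort}(\lambda_1,\mu)}$. For each such $\mu$, either $\mu_1>\lambda_1$, in which case the sorted partition has first part $>\lambda_1$. Or $\mu_1\le\lambda_1$, in which case $\mathrm{sort}(\lambda_1,\mu)=(\lambda_1,\mu)>_{lex}(\lambda_1,\lambda')=\lambda$. In both cases the partition is lexicographically larger than $\lambda$.

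\emph{The $k\ge1$ terms.} The element $D_k(t_{\lambda'})\in\Lambda_{|\lambda'|-k}$ is some combination of $h_\nu$ with $\nu\vdash|\lambda'|-k$; we need no control over which. Each product $h_{\lambda_1+k}h_\nu$ equals $h_\rho$ for a partition $\rho\vdash d$ whose largest part is $\ge\lambda_1+k>\lambda_1$. Therefore $\rho>_{lex}\lambda$.

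The base case $t_\emptyset=1=h_\emptyset$ is trivial. This completes the unitriangularity.

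\textbf{Main obstacle.} The only delicate point is choosing the right order. My first instinct was the dominance order, but it fails for the correction terms. Those terms have the form $h_{\lambda_1+k}h_\nu$ with arbitrary $\nu$, and an arbitrary $\nu$ cannot be controlled in partial sums. The lexicographic order avoids this, because these terms are already larger at the first part.

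The remaining things to check are routine. One must confirm that the leading coefficient is exactly $1$, i.e. that $D_0=\mathrm{id}$, so that no cancellation can occur. One must also confirm that the factor $2$ in the annihilation part plays no role in the argument. This is the reason the same proof covers both $t_\lambda$ and the Schur-type operators of \cite{Jing}.
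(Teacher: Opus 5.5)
Your proof is correct, and it takes a genuinely different route from the paper. The paper does not expand $W_n$ directly. Instead it identifies $W_n$ with the Cai--Jing vertex operators via $X_{-n}(e^m\otimes f)=e^{m+1}\otimes W_{n-2m-1}(f)$, and writes $e^{\ell(\lambda)}\otimes t_\lambda$ as an iterated product of $X$'s applied to $e^0\otimes 1$. It then imports from \cite[Theorem 3.6]{Caijing} the expansion $t_\lambda=\sum_{\mu\geq\lambda}a_{\lambda\mu}h_\mu$ with $a_{\lambda\lambda}=1$ in the \emph{dominance} order.

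You instead use the normal-ordered expansion $W_n=\sum_{k\ge 0}h_{n+k}D_k$ and an induction on $\ell(\lambda)$, which gives unitriangularity only for the \emph{lexicographic} order. Your steps check out. Your two cases for the $k=0$ term are right. Every $k\ge 1$ term has a part $\lambda_1+k>\lambda_1$ and is therefore lexicographically larger. A small sanity check agrees: $t_{(1,1)}=h_1^2-2h_2$.

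The paper's route yields the finer dominance-order statement, which fits its remark about a simple change of basis to Schur functions. The cost is reliance on an external theorem and on a dictionary between conventions that the reader must verify.

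Your route is self-contained. It uses nothing about the constant $2$ beyond $D_0=\mathrm{id}$ and $\deg D_k=-k$, so it works for any scalar, including the Schur case. A total order such as lex is all that is needed, both for the proposition and for Corollary~\ref{cor: injectivesymmfunctions}, which only uses that $\{t_\lambda\}$ is a basis.
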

\begin{proof}
Let $\mathbb Q[\mathbb Z]$ be the group algebra of $\mathbb Z$, and write $\{e^m\}_{m\in \mathbb Z}$ for its basis. The operators $X_{-n}\colon \mathbb Q[\mathbb Z]\otimes \Lambda\to \mathbb Q[\mathbb Z]\otimes \Lambda$ (after setting $\alpha=1$) in \cite{Caijing} are related to our $W_n$ by 
\[X_{-n}(e^m\otimes f)= e^{m+1}\otimes W_{n-2m-1}(f)\,.\]
Thus
\[e^{\ell(\lambda)}\otimes t_\lambda=X_{-(\lambda_1+2\ell-1)}X_{-(\lambda_2+2\ell-3)}\ldots X_{-(\lambda_{\ell-1}+3)}X_{-(\lambda_{\ell}+1)}(e^0\otimes 1)\, \]
and by \cite[Theorem 3.6]{Caijing} we conclude that we can write
\[t_\lambda=\sum_{\mu\geq \lambda} a_{\lambda\mu}h_\mu\]
where $a_{\lambda\mu}\in \mathbb Q$ are such that $a_{\lambda\lambda}=1$, $\mu\geq \lambda$ denotes the dominance relation and $h_\mu=h_{\mu_1}\ldots h_{\mu_k}$ where $h_i$ is the $i$-th complete homogeneous symmetric polynomial. Since $\{h_\lambda\}$ is a basis of $\Lambda$, it follows that $\{t_\lambda\}$ is a basis as well.
\end{proof}

\begin{corollary}\label{cor: injectivesymmfunctions}
    If $n\geq d\geq 0$ then the operator
    \[W_n\colon \Lambda_d\to \Lambda_{d+n}\]
    is injective.
\end{corollary}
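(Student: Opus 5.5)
We deduce the corollary directly from Proposition~\ref{prop: basistlambda}, using that $W_n$ sends the basis $\{t_\lambda\}_{\lambda\vdash d}$ of $\Lambda_d$ to distinct basis elements of $\Lambda_{d+n}$.

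Let $n\geq d$ and let $\lambda=(\lambda_1\geq\ldots\geq\lambda_\ell>0)$ be a partition of $d$. Then $\lambda_1\leq d\leq n$, so the sequence $(n,\lambda_1,\ldots,\lambda_\ell)$ is again weakly decreasing. Hence it is a partition $\hat\lambda$ of $d+n$, and by the definition of $t_\lambda$,
\[
W_n(t_\lambda)=W_nW_{\lambda_1}\cdots W_{\lambda_\ell}(1)=t_{\hat\lambda}.
\]
For $d=0$ the only partition is the empty one, with $t_\emptyset=1$. In that case $W_n(1)=t_{(n)}$ when $n>0$, and $W_0(1)=1$ when $n=0$. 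So the same formula holds with $\hat\emptyset=(n)$, or $\hat\emptyset=\emptyset$ if $n=0$.

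The assignment $\lambda\mapsto\hat\lambda$ is injective, since $\lambda$ is recovered from $\hat\lambda$ by deleting its first part. Thus $W_n$ maps the basis $\{t_\lambda\}_{\lambda\vdash d}$ of $\Lambda_d$ bijectively onto a subset of the basis $\{t_\mu\}_{\mu\vdash d+n}$ of $\Lambda_{d+n}$; both are bases by Proposition~\ref{prop: basistlambda}. Distinct elements of a basis are linearly independent, so $W_n$ carries a basis to a linearly independent set. Since $W_n$ is linear, it is injective on $\Lambda_d$.

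The only point needing care is the hypothesis $n\geq d$. It guarantees $n\geq\lambda_1$ for every $\lambda\vdash d$, so that prepending $n$ still gives a partition and the product $W_nW_{\lambda_1}\cdots W_{\lambda_\ell}(1)$ is literally one of the basis vectors $t_\mu$. Without this hypothesis one would have to straighten a non-decreasing product of vertex operators, which is where the argument could fail. With the hypothesis, no obstacle remains beyond invoking the basis proposition.
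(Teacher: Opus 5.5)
Your proof is correct and follows the paper's argument: since $n\geq d\geq\lambda_1$, you get $W_n(t_\lambda)=t_{\{n\}\cup\lambda}$, so $W_n$ maps the basis $\{t_\lambda\}_{\lambda\vdash d}$ injectively into the basis $\{t_\mu\}_{\mu\vdash d+n}$ of Proposition~\ref{prop: basistlambda}. Your separate check of the degenerate case $d=0$, where $W_n(1)=h_n$ and in particular $W_0(1)=1$, is a harmless detail that the paper leaves implicit.
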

\begin{proof}
    This is clear from Proposition \ref{prop: basistlambda} since, by definition, $W_n(t_\lambda)=t_{\{n\}\cup \lambda}$ where $\{n\}\cup \lambda$ is the partition obtained by adding $n$ to $\lambda$; note that $n\geq d$ implies that $n \geq \lambda_1$.
\end{proof}
\begin{remark}
    Numerical evidence suggests that $W_n$ is actually injective for any $n\geq 0$ and no restriction on $d$. It is also apparent that the change of basis matrix between $\{t_\lambda\}$ and the basis of Schur functions $\{s_\lambda\}$ is quite simple. 
\end{remark}

\subsection{Injectivity on vertex algebras}

Let us now recall the notion of lattice vertex algebras \cite[Section 5.4, 5.5]{Kac}. For this, we let $L\simeq \mathbb Z^n$ be a finitely generated free abelian group and $Q\colon L\otimes L\to \mathbb Z$ a symmetric integral pairing; we do not require $Q$ to be non-degenerate. The Fock space associated to $L$ is 
$F_L=\textup{Sym}\big(L\otimes_\mathbb Z  t^{-1}\mathbb Q[t^{-1}]\big)$.
We assign a grading to $F_L$ by letting $t$ have degree $-1$, and write $F_L^d$ for the degree $d$ part of $F_L$. We write $v_n$ for $v\otimes t^n$, when $n<0$. The Heisenberg Lie algebra is spanned by $\{v_n\}_{n\in \mathbb Z}$ and $\id$, and it has Lie bracket
\[[v_n, u_m]=n\delta_{n+m=0}Q(v,u)\id\,.\]
The Heisenberg algebra acts naturally on the Fock space: the creation operators $v_n$ with $n<0$ act by multiplication, and $v_n$ with $n\geq 0$ are declared to annihilate $1\in F_V$.

The lattice vertex algebra ${\bf V}(L)$ associated to $(L, Q)$ has underlying vector space given by
\[{\bf V}(L)=\bigoplus_{\alpha\in L} e^\alpha\otimes F_{L}\,.\]
By construction, given $\alpha\in L$, the restriction of the vertex operators
\begin{equation}\label{eq: operatorealpha}
e^\alpha_k\colon F_L\simeq e^\beta\otimes F_{L}\to e^{\alpha+\beta}\otimes F_L\simeq F_L\end{equation}
associated to $e^\alpha=e^\alpha\otimes 1$ are given by
\[\sum_{k\in \mathbb Z}e^\alpha_k z^{-k-1}=\epsilon_{\alpha, \beta} z^{Q(\alpha, \beta)}\exp\left(-\sum_{n<0}\frac{v_n}{n}z^{-n}\right)\exp\left(-\sum_{n>0}\frac{v_n}{n}z^{-n}\right)\]
where $\epsilon_{\alpha, \beta}\in \{1,-1\}$.

\begin{proposition}\label{prop: injectivityVA}
    Let $u=e^\alpha\otimes 1\in {\bf V}(L)$ and $v=e^\beta\otimes m\in {\bf V}(L)$ with $\alpha, \beta\in L$ and $m\in F_L^d$. Suppose that $Q(\alpha, \alpha)=2$ and that $d\leq -(k+1+Q(\alpha, \beta))$. Then $u_k v=0$ implies that $v=0$.
\end{proposition}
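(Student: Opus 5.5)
The plan is to reduce the statement to Corollary \ref{cor: injectivesymmfunctions} by splitting the Fock space along the direction of $\alpha$ and recognising $u_k$ as $\pm W_n$ on the $\alpha$-factor, for a suitable $n$.

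\textbf{Step 1: split $F_L$.} Work over $\mathbb Q$. Since $Q(\alpha,\alpha)=2\neq 0$, we have an orthogonal decomposition
\[L_{\mathbb Q}=\mathbb Q\alpha\oplus \alpha^{\perp},\qquad \alpha^\perp=\{v \mid Q(\alpha,v)=0\}.\]
This decomposition is valid even when $Q$ is degenerate. Consequently
\[F_L\cong \textup{Sym}\big(\mathbb Q\alpha\otimes t^{-1}\mathbb Q[t^{-1}]\big)\otimes \textup{Sym}\big(\alpha^\perp\otimes t^{-1}\mathbb Q[t^{-1}]\big),\]
and this isomorphism respects the grading. The vertex operator in (\ref{eq: operatorealpha}) involves only the modes $\alpha_n$. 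By the Heisenberg relations, these modes commute with all modes $w_m$ for $w\in\alpha^\perp$. Hence $\alpha_n$ acts only on the first tensor factor. It follows that $u_k$ has the form $T\otimes \id$ for some operator $T$ on the first factor.

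\textbf{Step 2: identify $T$ with a $W_n$.} Identify the first factor with $\Lambda$ by sending $\alpha_{-m}\mapsto p_m$ for $m>0$; this matches degrees, since $\deg p_m=m$. Then for $m>0$ the annihilator $\alpha_m$ acts as
\[Q(\alpha,\alpha)\, m\,\partial/\partial p_m = 2p_{-m}.\]
Substituting into the generating series:
\begin{itemize}
\item the creation exponential becomes $\exp\big(-\sum_{n<0}\alpha_n z^{-n}/n\big)=\exp\big(\sum_{m>0}p_m z^m/m\big)$;
\item the annihilation exponential becomes $\exp\big(-\sum_{n>0}2p_{-n}z^{-n}/n\big)=\exp\big(\sum_{m<0}2p_m z^m/m\big)$.
\end{itemize}
This is exactly the definition of $\sum_n W_n z^n$. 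Therefore
\[\sum_k u_k z^{-k-1}=\epsilon_{\alpha,\beta}\,z^{Q(\alpha,\beta)}\sum_n W_n z^n .\]
Comparing coefficients of $z^{-k-1}$ gives $u_k=\pm W_n\otimes\id$ with $n=-k-1-Q(\alpha,\beta)$. The hypothesis $d\le -(k+1+Q(\alpha,\beta))$ then says exactly that $n\ge d$.

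\textbf{Step 3: conclude.} Write $m=\sum_{d'} f_{d'}$, where $f_{d'}\in \Lambda_{d'}\otimes F_{\alpha^\perp}^{\,d-d'}$ and $0\le d'\le d\le n$. The operator $W_n\otimes\id$ shifts the first degree by $n$. Hence the images of distinct $f_{d'}$ land in distinct bigraded pieces, and it suffices to show injectivity on each piece. On each piece the operator is $W_n|_{\Lambda_{d'}}\otimes\id$. Here $W_n|_{\Lambda_{d'}}$ is injective by Corollary \ref{cor: injectivesymmfunctions}, because $n\ge d\ge d'$. A tensor product of an injective linear map with the identity is injective over a field. So $u_kv=0$ forces $m=0$, and hence $v=0$.

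The main point requiring care is Step 2. It relies on the normalisation of the Heisenberg action, namely that $\alpha_m$ acts as $mQ(\alpha,\alpha)\partial/\partial\alpha_{-m}$, and on the sign and index conventions in the $z$-expansion, so that the shift comes out as $n=-k-1-Q(\alpha,\beta)$. The factor $2=Q(\alpha,\alpha)$ is precisely what produces the $2$ in the definition of $W_n$. Step 1's use of rational coefficients is harmless, since both injectivity and the Fock space construction are unaffected by extending scalars to $\mathbb Q$.
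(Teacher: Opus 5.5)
Your proposal is correct and takes essentially the same route as the paper. You split $L\otimes\mathbb Q=\mathbb Q\alpha\oplus\alpha^{\perp}$ and identify $F_\alpha$ with $\Lambda$ via $p_i=\alpha_{-i}$, so that $\alpha_i$ acts as $2p_{-i}$. You then recognise $u_k$ as $\pm W_{-(k+1+Q(\alpha,\beta))}\otimes\id$ and invoke Corollary~\ref{cor: injectivesymmfunctions}. Your Step 3, which decomposes $m$ into pieces in $\Lambda_{d'}\otimes F_{\alpha^\perp}^{\,d-d'}$ with $d'\le d\le n$, simply makes explicit a graded-piece argument that the paper leaves implicit.
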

\begin{proof}
    Let $V$ be the orthogonal complement of $\alpha$ in $L\otimes \mathbb Q$ with respect to the pairing $Q$, so that $L\otimes \mathbb Q=\mathbb Q\alpha\oplus V$. Then $F_L\simeq F_\alpha\otimes F_{V}$. If we identify $F_\alpha$ with the algebra of symmetric functions $\Lambda$ via $p_i=\alpha_{-i}$ then the action of $\alpha_i$ in $F_\alpha\simeq \Lambda$ is identified with $2p_{-i}$ for $i>0$, since $Q(\alpha, \alpha)=2$. On the other hand, all the operators $\alpha_i$ and $e_k^\alpha$ commute with $v_{n}$ for $v\in V$, since $Q(\alpha, v)=0$. Hence the operator $e_{k}^\alpha$ restricted to \eqref{eq: operatorealpha} is identified with $W_{-(k+1+Q(\alpha, \beta))}\otimes \textup{id}_{F_V}$, so the conclusion follows from Corollary \ref{cor: injectivesymmfunctions}.
\end{proof}

\subsection{Injectivity on the Lie algebra}

Given any vertex algebra ${\bf V}$, one associates (see \cite{Bor}) a Lie algebra  ${\bf V}/\operatorname{im}(T)$ with Lie bracket defined by
\[[\overline u, \overline v]=\overline {u_0 v}\,\]
where $\overline u$ denotes the image of $u\in {\bf V}$ in the quotient ${\bf V}/\operatorname{im}(T)$. The following is the main result of this appendix.

\begin{proposition}\label{prop: injectivity bracket}
    Let $u=e^\alpha\otimes 1\in {\bf V}(L)$ and $v=e^\beta\otimes m\in {\bf V}(L)$ with $m\in F_L^d$. Suppose that $Q(\alpha, \alpha)=2$, that $d\leq -Q(\alpha, \beta)-1$, and furthermore that there exists $\gamma\in L$ such that
    \[Q(\alpha, \gamma)=0\textup{ and }Q(\beta, \gamma)\neq 0\,.\]
    Then 
    \[[\overline u, \overline v]=0\Rightarrow \overline v=0\,.\]
\end{proposition}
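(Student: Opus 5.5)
The plan is to use the vector $\gamma$ to split each relevant lattice sector as $\ker(\gamma_1)\oplus \operatorname{im}(T)$, with a projection onto the first summand that commutes with $u_0$. This reduces the claim about $\overline u,\overline v$ in ${\bf V}(L)/\operatorname{im}(T)$ to Proposition \ref{prop: injectivityVA} applied with $k=0$. Note that the hypothesis $d\leq -Q(\alpha,\beta)-1$ is exactly the hypothesis of Proposition \ref{prop: injectivityVA} for $k=0$. Since $[\overline u,\overline v]=\overline{u_0v}$, the assumption says $u_0 v\in\operatorname{im}(T)$, and we must show $v\in\operatorname{im}(T)$.

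\textbf{Step 1: commutation relations.} I will record three standard relations in the lattice vertex algebra.
\begin{itemize}
\item The zero mode $\gamma_0$ acts on the sector $e^\delta\otimes F_L$ by the scalar $Q(\gamma,\delta)$.
\item The translation operator $T$ preserves each sector and raises the degree by $1$. It satisfies $[T,a_n]=-n\,a_{n-1}$ for Heisenberg modes, hence $[\gamma_1,T]=\gamma_0$.
\item We have $[T,u_0]=0$ (from $[T,u_n]=-n u_{n-1}$). Also $[\gamma_1,e^\alpha_k]=Q(\gamma,\alpha)e^\alpha_{k+1}=0$, because $Q(\gamma,\alpha)=0$.
\end{itemize}
On the sectors $e^\beta\otimes F_L$ and $e^{\alpha+\beta}\otimes F_L$, $\gamma_0$ acts by the same nonzero scalar $c:=Q(\gamma,\beta)=Q(\gamma,\alpha+\beta)$. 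So on these sectors the operator $D:=c^{-1}\gamma_1$ satisfies $[D,T]=1$. Moreover $D$ lowers degree and is therefore locally nilpotent.

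\textbf{Step 2: the projection.} On either sector I define
\[
\pi:=\sum_{j\geq 0}\frac{(-1)^j}{j!}\,T^jD^j .
\]
This is a finite sum on each vector, by local nilpotence, and it preserves degree. The Weyl-algebra relation $[D,T]=1$ gives the following, by the usual computation.
\begin{itemize}
\item $\pi T=0$ and $D\pi=0$.
\item $x-\pi x\in\operatorname{im}(T)$ for every $x$.
\end{itemize}
Hence $x\in\operatorname{im}(T)$ if and only if $\pi x=0$. Because $u_0$ commutes with both $T$ and $D$ (Step 1), we get $u_0\pi=\pi u_0$ as maps from the $\beta$-sector to the $(\alpha+\beta)$-sector.

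\textbf{Step 3: conclusion.} From $u_0v\in\operatorname{im}(T)$ we get $0=\pi u_0 v=u_0(\pi v)$. Since $\pi$ preserves both the sector and the degree, $\pi v=e^\beta\otimes m'$ with $m'\in F_L^d$. Proposition \ref{prop: injectivityVA} with $k=0$ then gives $\pi v=0$, i.e. $v\in\operatorname{im}(T)$, so $\overline v=0$.

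The main obstacle I anticipate is Step 1/2 bookkeeping. One has to check the sign and normalisation conventions for $T$ and the modes, namely $[T,a_n]=-na_{n-1}$ and $[T,e^\alpha_n]=-ne^\alpha_{n-1}$, so that $[\gamma_1,T]=\gamma_0$ holds exactly. One also has to verify that the cocycle signs $\epsilon_{\alpha,\beta}$ do not spoil the commutation $[\gamma_1,u_0]=0$; they should not, since $\gamma_1$ acts only on the Fock factor.
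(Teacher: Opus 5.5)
Your proposal is correct and is essentially the paper's proof. Your $\pi v$ is exactly the paper's $\gamma$-normalized lift $v'=\sum_{j\geq 0}\frac{(-1)^j}{j!}T^j\gamma_1^j v$ (with $Q(\beta,\gamma)$ rescaled to $1$), and your identities $\pi T=0$, $D\pi=0$, $\pi u_0=u_0\pi$ package the paper's hand computations: that $\gamma_1 v'=0$, that $\gamma_1(u_0v')=0$ via the commutator formula, and that a $\gamma_1$-closed element of $\operatorname{im}(T)$ vanishes. Both arguments then finish with Proposition \ref{prop: injectivityVA} at $k=0$, and your convention $[\gamma_1,T]=\gamma_0$ is the one the paper's computation actually uses.
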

\begin{proof}
The proof combines the existence of $\gamma$-normalized lifts from the Lie algebra to the vertex algebra, which is discussed in a different language in \cite[Section 1.2.1]{KLMP}, and the proof of \cite[Lemma 5.11]{BLM}. We spell out the details here in a purely vertex algebra language.

First of all, we claim that $Q(\beta, \gamma)\neq 0$ implies that we can find a (unique) lift $v'\in {\bf V}={\bf V}(L)$ of $\overline v$ with the property that $\gamma_{1}v'=0$. By scaling $\gamma$ (possibly allowing it to be in $L\otimes \mathbb Q$) we assume that $Q(\beta, \gamma)=1$, and let
\[v'=\sum_{j\geq 0}\frac{(-1)^j}{j!}T^j \gamma_1^j u\,.\]
Here, $T^j$ and $\gamma_1^j$ stand for $T\circ\ldots \circ T$ and $\gamma_1\circ \ldots \circ \gamma_1$. Clearly we have $\overline v'=\overline v$. Using that $[T, \gamma_1]=\gamma_0$ and $\gamma_0$ acts on $e^\beta\otimes F_L$ as multiplication by $Q(\beta, \gamma)=1$, we obtain
\[\gamma_1 v'=\sum_{j\geq 0}\frac{(-1)^j}{j!}[\gamma_1, T^j] \gamma_1^j u+\sum_{j\geq 0}\frac{(-1)^j}{j!}T^j \gamma_1^{j+1} u=0\,.\]
Let $w=u_0 v'$. Since we have $\gamma_1u=0$ (trivially) and $\gamma_0 u=0$ (since $Q(\alpha, \gamma)=0$), using \cite[Section 4 (v)]{Bor} we have
\[\gamma_1 w=u_0(\gamma_1 v)+(\gamma_0 u)_1 v-(\gamma_1 u)_0 v=0\,.\]
By hypothesis, we have 
\[\overline w=[\overline u, \overline v']=[\overline u, \overline v]=0\]
so $w=T(w')$ for some $w'$. But then
\begin{align*}w&=\sum_{j\geq 0}\frac{(-1)^j}{j!}T^j \gamma_1^j w=\sum_{j\geq 0}\frac{(-1)^j}{j!}T^j \gamma_1^j T(w')\\
&=\sum_{j\geq 0}\frac{(-1)^j}{j!}T^{j+1} \gamma_1^j T(w')+\sum_{j\geq 0}\frac{(-1)^j}{j!}T^{j}[ \gamma_1^j ,T](w')=0\,.
\end{align*}
By Proposition \ref{prop: injectivityVA} it follows that $v'=0$, and hence $\overline v=0$. 
\end{proof}

\begin{remark}
    The proofs of Propositions \ref{prop: injectivityVA} and \ref{prop: injectivity bracket} work verbatim in the slightly more general setting of lattice vertex algebras with an odd part, as described in \cite[Theorem 3.5]{BLM}.
\end{remark}

\subsection{An application}

We specialize to a very particular example, as in~\cite[Lemma 4.8]{BLM}.

We suppose that $X$ is as above, that $L= K(X)$, and that $Q$ is \textit{the symmetrized pairing} on $L$. That is, for $$\chi(v, w) = \int_X \operatorname{ch}(v^{\vee})\operatorname{ch}(w)\operatorname{Td}(X),$$ the form $Q(v, w)$ is given by $\chi(v, w) + \chi(w, v).$

One immediately gets the lattice vertex algebra $\mathbf V(L)$. 
It is an evident corollary of a theorem of Gross (see~\cite[Lemma 4.8]{BLM}) that Joyce's vertex algebra $H_*(\mathcal M_X)$ (defined in~\cite{Joy})  naturally coincides with $\mathbf V(L)$, since $K(X) = K^{sst}(X)$ by the results of \cite{Voi}.

More precisely, the dual vector space to Joyce's vertex algebra can be identified (via \textit{geometric realization} procedure, see~\cite{BLM}) with the space of descendants $\mathbb D$, so that the pairing between $\mathbf V$ and $\mathbf V^*$ becomes the integration of the descendants against the given homology class.

Unravelling the definitions, one immediately sees that the cohomological map $\tau \to \tau'$ on algebraic descendants from Subsection~\ref{JSw}, for $u$ corresponding to the fundamental class of the point which is the rigidified moduli space of sheaves with the topological type of $\mathcal O(-n)[1]$, is precisely the dual to the Lie bracket with $u$. 

Thus, if we knew that $Q(v_n, \gamma) \neq 0$, $Q(\mathcal O(-n)[1]) = 0$ for some $\gamma$ (equivalently, $Q(\mathbf v, \gamma) \neq 0$, $Q(\mathcal O(-n)[1], \gamma ) = 0$) and for some sufficiently large $n$, we would immediately establish the statement needed for the goal from Subsection~\ref{goal2}. Indeed, Proposition \ref{prop: injectivity bracket} implies it since the condition on $d$ holds, for $n >> 0$, by asymptotic Riemann-Roch theorem.

But this desired statement is an exercise in linear algebra. For an algebraic class $A \in K(X)$, let us denote by $A_i$ the element $\operatorname{ch}_i(A) \in H^{2i}(X)$, and by $\operatorname{td}_i$ the $2i$-th component of $\operatorname{td}_X$. Then,

$$Q(A, B)/2 = A_0B_0 + A_0B_2\operatorname{td}_1 - A_1B_1\operatorname{td}_1 + A_2B_0\operatorname{td}_1.$$

It is evident that one can choose $n$ high enough for the $\gamma$ in question to exist.

Indeed, consider $\widetilde{A}=(A_0, \ A_1,\ A_2)$ (in degrees $0, \ 2, \ 4$) for $A = \textbf{v}$. Choose an integer $n\geq 0$ such that
\[
A_1+A_0\,nH \neq 0 \qquad \text{in } N^1(X)_{\mathbb{Q}}.
\]

Now pick a divisor class $D\in N^1(X)_{\mathbb{Q}}$ such that simultaneously
\[
\int_X (A_1+A_0nH)\,D\,\mathrm{td}_1 \neq 0
\qquad\text{and}\qquad
\int_X H\,D\,\mathrm{td}_1 \neq 0.
\]
This is possible because each condition cuts out a (rational) hyperplane in $N^1(X)_{\mathbb{Q}}$,
and nondegeneracy ensures these hyperplanes are proper.

Let
\[
C := \int_X H^2\,\mathrm{td}_1 \ (\neq 0),
\qquad
\alpha := -n\cdot \frac{\int_X H\,D\,\mathrm{td}_1}{C}\,H^2
\in H^4(X) \otimes \mathbb{Q}.
\]
Finally set
\[
h := (0,D,\alpha).
\]

Then,

\[
\begin{aligned}
Q\bigl(\mathrm{ch}(\mathcal O(-n)),h\bigr)/2
&= \int_X (\alpha + nHD)\,\mathrm{td}_1 \\
&= \int_X \alpha\,\mathrm{td}_1 \;+\; n\int_X H D\,\mathrm{td}_1 \\
&= -n\,\frac{\int_X H D\,\mathrm{td}_1}{C}\int_X H^2\,\mathrm{td}_1 \;+\; n\int_X H D\,\mathrm{td}_1 \\
&= 0.
\end{aligned}
\]

And

\[
\begin{aligned}
Q(\mathbf v,h)/2
&= \int_X (A_0\alpha - A_1D)\,\mathrm{td}_1 \\
&= A_0\int_X \alpha\,\mathrm{td}_1 \;-\; \int_X A_1D\,\mathrm{td}_1 \\
&= -n A_0\int_X H D\,\mathrm{td}_1 \;-\; \int_X A_1D\,\mathrm{td}_1 \\
&= -\int_X (A_1+A_0nH)\,D\,\mathrm{td}_1 \neq 0.
\end{aligned}
\]

\bigskip
\noindent\textsc{Ivan Karpov}\\
Massachusetts Institute of Technology, Department of Mathematics\\
\textit{Email address}: \href{mailto:karpov57@mit.edu}{\texttt{karpov57@mit.edu}}

\medskip
\noindent\textsc{Miguel Moreira}\\
Massachusetts Institute of Technology, Department of Mathematics\\
\textit{Email address}: \href{mailto:miguel73@mit.edu}{\texttt{miguel73@mit.edu}}

\end{document}